\documentclass{amsart}
\usepackage{amsmath,amssymb,mathrsfs,amsthm,delarray}
\usepackage{hyperref}
\numberwithin{equation}{section}

\newtheorem{theorem}{Theorem}[section]
\newtheorem{lemma}[theorem]{Lemma}
\newtheorem{corollary}[theorem]{Corollary}
\newtheorem{proposition}[theorem]{Proposition}

\newtheorem*{claim*}{Claim}

\newcommand{\C}{\mathbb C}
\theoremstyle{definition}
\newtheorem{definition}[theorem]{Definition}

\newtheorem*{acknowledgement}{Acknowledgement}
\theoremstyle{remark}
\newtheorem{remark}[theorem]{Remark}

\DeclareMathOperator{\Mod}{mod}

\title[Hybrid classes]{Complex analytic structure of hybrid classes of generalized polynomial-like mappings}
\author{Miaokeyan Lin}
\address{Shanghai Center for Mathematical Sciences, Fudan University}
\email{mkylin21@m.fudan.edu.cn}
\author{Weixiao Shen}
\address{Shanghai Center for Mathematical Sciences, Fudan University}
\email{wxshen@fudan.edu.cn}

\author{Yimin Wang}
\address{Department of Mathematics, Shanghai Normal University}
\email{yiminwang@shnu.edu.cn}
\keywords{hybrid class, vertical vector, horizontal vector}
\subjclass[2020]{Primary:37F20; Secondaries:37F31}
\date{\today}

\usepackage{svg}

\begin{document}
\maketitle
\begin{abstract}
In this paper, we study hybrid classes of generalized polynomial-like mappings. We show that locally they are complex analytic submanifolds with finite codimension equal to the total order of critical points in the filled Julia set (counting multiplicity). 
 \end{abstract}

\section{Introduction}
Polynomial-like mappings are fundamental objects in holomorphic dynamics. The notion was first introduced in~\cite{DH2} to study the self-similarity of the Mandelbrot set. Several generalized versions have since appeared in the literature, 
responding to the natural needs in studying high degree polynomials.


In~\cite{L2}, Lyubich endowed the space of quadratic-like mappings with a complex analytic structure and used this structure to establish the hyperbolicity of the Feigenbaum renormalization operator. A key ingredient in Lyubich's approach is to show that the hybrid class of a quadratic-like mapping with connected Julia set is a complex submanifold of codimension one in the whole space of quadratic-like mappings.  
The goal of this paper is to generalize the last result of Lyubich to the setting of generalized polynomial-like mappings, which we believe is useful in further developing renormalization theory for multicritical maps studied in~\cite{Smania, Wang}, among others.

A holomorphic mapping $f:U\to V$ is called a {\em generalized polynomial-like mapping (GPL)}, if 
\begin{itemize}
\item each of $U$ and $V$ is a non-empty finite union of Jordan disks in $\mathbb{C}$ with real analytic boundaries; 
\item $U\Subset V$;
\item for each component $U_0$ of $U$, $f|_{U_0}$ is a proper holomorphic map from $U_0$ onto a component of $V$;
\item each component of $V$ intersects $U$;
\item $f$ extends analytically to a neighborhood of $\overline{U}$ in $\C$ with no critical point in $\partial U$.
\end{itemize} 
The last condition is added for the convenience of exposition. 
The fourth condition implies that the {\em filled} Julia set of $f$ defined as 
\begin{equation}\label{eqn:Kf}
K(f)=\{z\in U: f^n(z)\in U, \forall n=0,1,\ldots\}
\end{equation}
is non-empty and intersects each component of $U$. 

We define an equivalence relation $\sim$ between GPLs: $f\sim \widetilde{f}$ if $K(f)=K(\widetilde{f})$ and $f=\widetilde{f}$ in a neighborhood of $K(f)$. 
An equivalence class in this relation is called a {\em GPL germ}. 
Let $\mathcal{GPL}$ denote the space of all GPL germs.

Similar to Lyubich's approach, we endow topology and complex structure on the space $\mathcal{GPL}$ using Banach injections 
\begin{equation}\label{eqn:jw}
j_W:\mathcal{Q}_W\to \mathcal{GPL}
\end{equation} 
defined below. 
For each bounded open subset $W$ of $\mathbb{C}$, let $\mathcal{B}_W$ denote the Banach space of continuous functions defined on $\overline{W}$ which are holomorphic in $W$, endowed with the supremum norm $\|\cdot\|_W$. For each $f\in \mathcal{B}_W$, let 
\begin{equation}\label{eqn:K*fW}
K_*(f)=K_*(f,W):=\{z\in \overline{W}: f^n(z)\in \overline{W}\text{ for all } n\ge 1\}. 
\end{equation}

Let $\mathcal{Q}_W$ be the collection of maps $f\in \mathcal{B}_W$ for which the following holds:
\begin{itemize}
\item $K_*(f)$ is non-empty and contained in $W$;
\item $f$ has a GPL restriction $f:U\to V$ with $K(f:U\to V)=K_*(f)$;
\item each component of $W$ intersects $K_*(f)$.
\end{itemize}
We define $j_W(f)$ to be the GPL germ determined by the GPL restriction $f:U\to V$. 
Clearly, each GPL $f:U\to V$ belongs to $\mathcal{Q}_U$ and is a representative of $j_U(f)$. 

\begin{proposition}\label{prop:cpxstr}
\begin{enumerate}
\item for each bounded open subset $W$ of $\C$, $\mathcal{Q}_W$ is an open subset of $\mathcal{B}_W$;
\item $j_W$ is injective; 
\item for bounded open subsets $W$ and $\widehat{W}$ of $\C$, if there is a non-empty open subset $\mathcal{U}_W$ of $\mathcal{Q}_W$ such that $j_W\mathcal{U}_W\subset j_{\widehat{W}} \mathcal{Q}_{\widehat{W}}$, then $\overline{W}\supset \widehat{W}$ and the transition map $j_{\widehat{W}}^{-1}\circ j_{W}: \mathcal{U}_W\to \mathcal{Q}_{\widehat{W}}$ is given by restriction $f\mapsto f|_{\widehat{W}}$. 
\end{enumerate}
\end{proposition}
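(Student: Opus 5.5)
For (1), I fix $f\in\mathcal{Q}_W$ with a GPL restriction $f:U\to V$ satisfying $K(f:U\to V)=K_*(f)$. The plan is to show that every $g\in\mathcal{B}_W$ with $\|g-f\|_W<\varepsilon$ has a GPL restriction $g:U_g\to V_g$ close to $f:U\to V$, with $K(g:U_g\to V_g)=K_*(g)$.

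The first task is to produce $V_g$ and $U_g$. Since $K_*(f)\subset U$ is compact and $U\Subset V$, I first shrink $V$ slightly, replacing it by a finite union $V'$ of real-analytic Jordan disks with $f(U)\supset \overline{V'}$ near $\partial V$. Concretely, I take the components of $f^{-1}(V')\cap U$ that meet $K_*(f)$, and I arrange that $f$ has no critical value on $\partial V'$. Each component $U_0$ is then mapped properly onto a component of $V'$ with real-analytic boundary. For $g$ close to $f$, Rouch\'e's theorem and the stability of regular level curves give a component $U_{0,g}$ of $g^{-1}(V')$, close to $U_0$, on which $g$ is proper of the same degree. I set $U_g$ to be the union of these components and keep $V_g=V'$. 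The conditions $U_g\Subset V'$ and the absence of critical points on $\partial U_g$ persist under small perturbation.

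The main obstacle is the equality $K(g:U_g\to V')=K_*(g)$, that is, excluding points of $\overline W$ whose $g$-orbit stays in $\overline W$ but leaves $U_g$. For this I use compactness. The set $\overline W\setminus U$ meets $K_*(f)$ in the empty set, so every $z\in\overline W\setminus U$ has some iterate $f^{n}(z)\notin\overline W$, and on $\overline{W}$ the iterate $f^n$ is only defined while the orbit remains in $\overline W$. Because escape is an open condition, I obtain a uniform $N$ and $\delta>0$ such that every $z\in\overline W\setminus U'$, where $U'\Subset U$ is a neighborhood of $K_*(f)$, has an iterate $f^{n}(z)$ with $n\le N$ lying at distance $>\delta$ outside $\overline W$. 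This needs care: the escape time can be made uniform because the set of $z$ with $f^k(z)\in\overline W$ for $k\le n$ is closed, and these sets decrease to $K_*(f)$. Hence, for large $N$, the points surviving $N$ steps lie in $U'$. The same conclusion then holds for $g$ near $f$ by continuity of finitely many iterates on compact sets. Consequently $K_*(g)\subset U'$, and $K_*(g)=K(g|U_g)$ once $U'$ is chosen inside $U_g$ and $g(U_g\setminus K)$ eventually leaves $\overline W$. The facts that $K_*(g)$ is nonempty in each component of $U_g$, and that each component of $W$ meets $K_*(g)$, follow from properness: nested preimages of $V'$ are nonempty. The components of $W$ each contain a component of $U$, and these persist.

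For (2), suppose $j_W(f)=j_W(g)$. Then $K_*(f)=K_*(g)$, and $f=g$ near this set. Each component of $W$ meets $K_*(f)$, so by the identity theorem $f=g$ on every component of $W$, and by continuity on $\overline W$.

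For (3), suppose some point of $\widehat W$ lay outside $\overline W$. For $f\in\mathcal{U}_W$, write $j_W(f)=j_{\widehat W}(\hat f)$. Each component of $\widehat W$ meets $K(\hat f)=K_*(f)\subset W$, and $\hat f=f$ near this set. The idea is to show that $\hat f$ is the analytic continuation of $f$, and then to exploit freedom in $\mathcal{U}_W$. Let $\widehat W_0$ be a component of $\widehat W$ containing a point $p\notin\overline W$. I perturb $f$ within $\mathcal{U}_W$ by $f+tu$, where $u\in\mathcal{B}_W$ is chosen, via Runge approximation, so that $f+tu$ cannot be continued holomorphically along a path from $K_*(f)$ to $p$ in $\widehat W_0$. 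For instance, $u$ can have a pole at a point of $\widehat W_0\setminus\overline W$, accessible along that path. This contradicts the existence of $\hat f_t\in\mathcal{B}_{\widehat W}$. Hence $\widehat W\subset\overline W$. The identity theorem then gives $\hat f=f|_{\widehat W}$ on each component of $\widehat W$, which proves that the transition map is the restriction.
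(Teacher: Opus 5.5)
Your overall plan is the paper's. In (1) you perturb a GPL restriction compactly contained in $W$ and keep $K_*(g)$ inside it; shrinking $V$ and using stability of regular level curves is a fine substitute for Lemmas~\ref{lem:smallglp} and~\ref{lem:persistglp}. Part (2) is identical. In (3) the perturbation is $f+t/(z-a)$ with $a\in\widehat W\setminus\overline W$. The problem is the one dynamical ingredient, upper semicontinuity of $K_*$ (Lemma~\ref{lem:K*cont}), which you state incorrectly in (1) and omit in (3).

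\textbf{Part (1).} The uniform $\delta$ does not exist. Take $W=D(0,2)$ and $f(z)=z^2$. A point with $|z|=\sqrt{2}+\epsilon$ leaves $\overline W$ at the first step, but only by about $2\sqrt{2}\,\epsilon$, and no later iterate is defined. In general, a point mapped onto $\partial W$ that escapes only later has neighbours escaping at that moment by an arbitrarily small margin. Under a nearby $g$ such a neighbour need not escape then at all: for $g=(1-\eta)z^2$ it escapes one step later. So the fixed-margin transfer of escape times from $f$ to $g$ fails. Your uniform $N$ is correct, but the passage to $g$ must be done by contradiction. Suppose $g_n\to f$ in $\mathcal{B}_W$ and $z_n\in K_*(g_n)\setminus U'$, and pass to a subsequence $z_n\to z_0\in\overline W\setminus U'$. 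Inductively $g_n^k(z_n)\to f^k(z_0)$, by uniform convergence on $\overline W$ and continuity of $f$. Hence $f^k(z_0)\in\overline W$ for all $k$, so $z_0\in K_*(f)\subset U'$, a contradiction. This is exactly Lemma~\ref{lem:K*cont}, and it makes $N$ unnecessary.

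\textbf{Part (3).} The same ingredient is missing. To contradict holomorphy at $a$, you need a connected open subset of $W\cap\widehat W_0$ on which both $\hat f=f$ and $\hat f_t=f_t$ hold. But $\hat f_t=f_t$ is only known near $K_*(f_t)$, which moves with $t$. So ``$f+tu$ cannot be continued along a path from $K_*(f)$'' contradicts nothing until you know that $\hat f_t$ continues the germ of $f_t$ at a point where $\hat f=f$. The paper supplies this step as follows:
\begin{enumerate}
\item choose $r>0$ with $B(z,2r)\subset W\cap\widehat W$ for all $z\in K_*(f)$;
\item use Lemma~\ref{lem:K*cont} to place $K_*(f_t)$ in the $r$-neighbourhood of $K_*(f)$ for small $|t|$;
\item take $\hat z_0\in K_*(f_t)\cap\widehat W_0$, which exists because $\hat f_t\in\mathcal{Q}_{\widehat W}$, and $z_0\in K_*(f)$ with $|\hat z_0-z_0|<r$;
\item apply the identity theorem on $B(z_0,2r)\subset\widehat W_0$ to both pairs, giving $\hat f_t-\hat f=t/(z-a)$ there, hence on $\widehat W_0\setminus\{a\}$, which is impossible.
\end{enumerate}
Runge approximation is not needed, since $1/(z-a)$ already lies in $\mathcal{B}_W$. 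Also, your final identity-theorem step for the restriction assumes $f$ is holomorphic on components of $\widehat W$. The inclusion $\widehat W\subset\overline W$ only gives holomorphy on $W\cap\widehat W$; the paper leaves this point implicit too.
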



We say that two generalized polynomial-like mappings $f,\widetilde{f}$ are {\em hybrid equivalent} if there is a qc map $\varphi:\mathbb{C}\to \mathbb{C}$ such that 
$\varphi(K(f))=K(\widetilde{f})$, 
\begin{equation}\label{eqn:conjugacy}
\varphi (f(z))=\widetilde{f}(\varphi(z))
\end{equation}
holds in a neighborhood of $K(f)$ and $\bar{\partial} \varphi=0$ holds a.e. on $K(f)$. This notion clearly descends to the level of GPL germs. The hybrid class of a GPL germ $f$ is the set of all GPL germs that are hybrid equivalent to $f$. 
For each GPL germ $f$, let $\mathcal{H}(f)$ denote the collection of all GPL germs which are hybrid equivalent to $f$. For $f\in \mathcal{Q}_W$ and any open subset $\mathcal{V}_W$ of $\mathcal{Q}_W$ containing $f$, let $\mathcal{H}_*(f,\mathcal{V}_W)$ denote the component of $j_W^{-1}(\mathcal{H}(f))\cap \mathcal{V}_W$  which contains $f$. 

The {\em criticality} of a GPL $f:U\to V$ is defined as 
\begin{equation}\label{eqn:criticality}
d(f):=\sum_{c\in \text{Crit}(f)\cap K(f)} \text{ord}(f,c),
\end{equation}
where $\text{ord}(f,c)=m$ if $f'(c)=f''(c)=\dots=f^{(m)}(c)=0$, but $f^{(m+1)}(c)\not=0$. 
The notion also descends to the level of GPL germs. 

The main theorem of this paper is the following:
\begin{theorem}\label{thm:hybridclass} Let $W_0$ be a bounded open subset of $\C$ and let $f_0\in \mathcal{Q}_{W_0}$. Then for any small neighborhood $\mathcal{V}$ of $f_0$ in $\mathcal{Q}_{W_0}$, the following holds: For each $f\in \mathcal{V}$ with $d(j_{W_0}f)=d(j_{W_0}f_0)$,  $\mathcal{H}_*(f,\mathcal{V})$ is an embedded complex analytic submanifold of $\mathcal{B}_{W_0}$ of codimension $d(j_{W_0} f_0)$ and closed in $\mathcal{V}$.    
\end{theorem}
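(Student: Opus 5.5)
We follow Lyubich's strategy for quadratic-like maps: decompose the tangent space $\mathcal{B}_{W_0}$ at $f$ into \emph{horizontal} directions, tangent to the hybrid class, and a finite-dimensional \emph{vertical} complement. We then show that the hybrid class is locally the graph of a holomorphic map over the horizontal space. Let $d=d(j_{W_0}f_0)$. For $f$ near $f_0$, the critical points of $f$ lying near $K_*(f_0)$ move holomorphically as a set, with total multiplicity $d$. Hence the condition $d(f)=d$ means that all these critical points (with multiplicity) remain in $K(f)$.

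\textbf{Step 1: the vertical family.} We build a holomorphic $d$-parameter family transverse to the hybrid class. Group the critical points of $f_0$ in $K(f_0)$ according to the component of $U$ containing them. By straightening componentwise (each component map is proper of some degree), $f_0$ is hybrid-conjugate near $K$ to a generalized polynomial model. Perturbing the polynomial-type coefficients, namely the critical values in the local model, gives a family $f_{0,\lambda}$, $\lambda\in\C^d$. We realize it by a holomorphic perturbation $f_0+\sum\lambda_i v_i$ with $v_i\in\mathcal{B}_{W_0}$. The vectors $v_i$ are chosen so that the map from $\lambda$ to the critical values $(f(c_j^{(k)}))$, symmetrized over each multiple critical point via elementary symmetric functions, has invertible derivative. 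This yields the vertical $d$-dimensional subspace $E^v$.

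\textbf{Step 2: the horizontal directions and the graph.} For $g$ near $f_0$, we construct a \emph{canonical} hybrid-equivalent representative. Take a qc conjugacy $h$ from a pullback: choose a qc map of $V\setminus U$ interpolating the boundary dynamics and pull back by the dynamics to get a conjugacy off $K$, with $\bar\partial=0$ on $K$. The holomorphic motion (Bers--Royden / $\lambda$-lemma) of the fundamental annuli provides Beltrami differentials depending holomorphically on $g$. Concretely, for $g$ with $d(g)=d$, we straighten and define $\Phi(g)\in\C^d$ as the \emph{vertical coordinate}. This is the parameter of the unique member of the model family hybrid equivalent to $g$, obtained via a Douady--Hubbard-type straightening in the generalized polynomial class with fixed combinatorics. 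Uniqueness holds because all critical points lie in $K$, so the model is determined up to affine normalization. The hybrid class near $f$ is then the fiber $\Phi^{-1}(\Phi(f))$.

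To get an honest submanifold, we instead define the graph directly. For each $g$ in a horizontal slice $f+E^h$, where $E^h$ is a closed complement of $E^v$, solve for $\lambda(g)$ such that $g+\sum\lambda_i v_i$ is hybrid equivalent to $f$. We do this by a holomorphic-motion argument: the family of Beltrami coefficients conjugating $f$ to nearby maps depends holomorphically on the parameters. The equation ``critical orbits stay in $K$ with the same relative position'' is then expressed as the vanishing of a holomorphic map to $\C^d$. Its $\lambda$-derivative is invertible by Step 1, and the implicit function theorem in Banach spaces gives the holomorphic graph $\lambda(g)$. Closedness in $\mathcal{V}$ follows because limits of hybrid conjugacies with uniformly bounded dilatation are hybrid conjugacies, by compactness of qc maps and the fact that $K$ varies upper semicontinuously.

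\textbf{Main obstacle.} The hardest point is showing that the hybrid class is locally \emph{exactly} this graph, that is, a single connected holomorphically parametrized piece of codimension $d$. The difficulty is that $K$ may be disconnected (a Cantor set of components), and the straightening to a model family need not be continuous in general. We would try to prove injectivity of the vertical direction: two parameters $\lambda\neq\lambda'$ giving hybrid-equivalent maps must be equal. The argument takes the hybrid conjugacy, which is conformal on $K$, and observes that it is $\bar\partial$-trivial in a neighborhood. It then uses the absence of invariant line fields on $K$ in the relevant sense (only the horizontal deformations, supported off $K$, remain), so that the hybrid conjugacy is conformal near $K$. That forces the germs to coincide. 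The restriction $d(f)=d(f_0)$ enters precisely here, since it excludes critical points escaping, which would add extra moduli.
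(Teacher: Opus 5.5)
\textbf{There is a genuine gap.}

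\textbf{No defining equation and no codimension count.} The implicit-function step in your Step~2 has nothing to act on. ``Critical orbits stay in $K$ with the same relative position'' is not an analytic condition on $\lambda$. The straightening coordinate $\Phi$ is also unavailable: when $K(f)$ is disconnected the straightened polynomial must have escaping critical points, so it is not unique up to affine conjugacy. In degree $N$ its hybrid class contains an $(N-1-d)$-dimensional family, by Astorg's result. Beyond the quadratic case, straightening is also in general discontinuous.

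Step~1 does not give transversality either. With $\beta\equiv 1$, the vector $v=1-f'$ is horizontal, yet $v(c)=1$ at every critical point. So an invertible derivative of $\lambda\mapsto(\text{critical values})$ says nothing about $E^v$ meeting the tangent space of the hybrid class trivially.

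For the remaining direction you invoke absence of invariant line fields. That is an unproved hypothesis, exactly what earlier results needed and what this theorem avoids. The argument is also wrong as written: a hybrid conjugacy is conformal only a.e.\ on $K$, not on a neighbourhood, and even a conformal conjugacy would not force $\lambda=\lambda'$. In the paper, $\text{codim}\,E^h(f_0,W_0)=d$ (Theorem~\ref{thm:dimension}) is a substantial separate result. It uses:
\begin{itemize}
\item Lyubich's decomposition together with his technical lemma;
\item qc-invariance of $\dim E^{vert}_*$, via continuity of vertical spaces inside a criticality stratum;
\item subordination to a balanced GPL;
\item straightening of balanced GPLs to polynomials with no critical relations outside $K(P)$;
\item Astorg's rank computation for polynomials.
\end{itemize}

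\textbf{The equation, markings, and closedness.} Your remark that Beltrami coefficients pulled back from the holomorphic motion of the fundamental annulus depend holomorphically on $g$ is the right ingredient, but it must be turned into an equation. Setting the coefficient to $0$ on $K_*(f)$ gives the mating map $I^f(g)=\varphi_{f,g}\circ f\circ\varphi_{f,g}^{-1}|_{\widetilde W}$. Then $\tau(v)=I^f(f+v)-(f+v)|_{\widetilde W}$ is holomorphic with values in the infinite-dimensional space $\mathcal{B}_{\widetilde W}$, and $D\tau_0=-P^{vert}_f(\cdot)|_{\widetilde W}$. The implicit function theorem only places $\tau^{-1}(0)$ inside a graph over $E^h$. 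Since $I^f$ goes from $\mathcal{B}_W$ to $\mathcal{B}_{\widetilde W}$, it cannot be used as a retraction. The paper needs Lemma~\ref{lem:holomorphiczero} to fill the graph, using density of the tangent cone of $\tau^{-1}(0)$ in $E^h$, which is supplied by the curves $t\mapsto I^f(f+tv)$.

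Moreover, any such equation only detects the \emph{marked} class, whose conjugacies agree with $h_g\circ h_f^{-1}$ off $U_f$, whereas the hybrid class can have infinitely many components. Showing that the component $\mathcal{H}_*(f,\mathcal{V})$ lies in the marked class (Proposition~\ref{prop:connected set}) takes pull-back arguments, rigidity theorems (Kozlovski--Shen--van Strien, Kahn--Lyubich), and a pigeonhole argument over the finitely many possible markings. None of these appear in your proposal.

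Closedness rests on the same point, since limits of hybrid conjugacies need not be hybrid. The control needed in Lemma~\ref{lem:closedH} is conjugacies that agree off $K_*(f_0)$ and whose dilatation on $K_*(f_0)$ tends to $1$. It comes from the marking by the holomorphic motion, not from compactness alone.
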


Here, we say that a subset $E$ of a complex Banach space $\mathcal{B}$ is an embedded submanifold of codimension $d$ if for any $f\in E$, there is a holomorphic map $\psi:\mathcal{U}\to \C^d$, where $\mathcal{U}$ is a neighborhood of $f$ in $\mathcal{B}$, such that $\psi(f)=0$, $\psi^{-1}(0)=E\cap\mathcal{U}$, and such that 
$D\psi_g$ has rank $d$ for each $g\in \mathcal{U}$. For $f\in E$, we say that $E$ is a complex analytic submanifold of codimension $d$ near $f$, if there is a neighborhood $\mathcal{V}$ of $f$ in $\mathcal{B}$ such that $E\cap\mathcal{V}$ is a complex analytic submanifold of $\mathcal{B}$  of codimension $d$. 
   
In general, the set $j_W^{-1}(\mathcal{H}(j_W f))\cap\mathcal{V}_W$ may have infinitely many connected components, see~\cite{Branner}. It would be interesting to know whether this set is closed in $\mathcal{V}_W$.

The theorem was proved by Lyubich~\cite[Section 4]{L2} in the quadratic-like case, i.e. in the case $U$ and $V$ are both connected and $f_0$ has a unique critical point which is non-degenerate. Lyubich's proof relies on the continuity of the straightening map in the quadratic case; this continuity, combined with the Ma\~n\'e–Sad–Sullivan theorem, allows one to reduce the problem to the (easy) case where $f_0$ 
is structurally stable.  However, as shown in~\cite[Chapter 3]{DH2}, the straightening map is in general discontinuous for high degree polynomial-like maps, so we have to use a different argument. 

Our strategy is as follows. As is well-known, we can equip a small neighborhood of a given map with a holomorphic motion which encodes the holomorphic dependence of the boundary dynamics of specified representatives of the GPL germs. This helps us to `identify' a connected component of a local hybrid class with a component of a marked hybrid class, which belongs to the fixed-point set of the `mating' map $I^f$.  Formally, $I^f$ is idempotent, so if it were a self-map of a Banach manifold, the submanifold property would follow from a standard retraction argument. 
But it is actually defined on an open set in $\mathcal{B}_W$  
and takes values in $\mathcal{B}_{\widehat{W}}$ with
$\widehat{W}\Subset W$, hence cannot be composed with itself. We circumvent this by showing that the `tangent' space of the fixed-point set at $f$ is dense in the horizontal space $E^h(f)$, which forces the fixed-point set to be a graph over $E^h(f)$. 

A significant part of our proof is to show that the tangent spaces (horizontal) of the leaves (the complex submanifolds asserted in the theorem) have codimension $d(j_Wf)$.  This will be done by showing first that the codimension is constant along a curve with fixed criticality using a straightforward extension of Lyubich's argument in the quadratic-like case, together with Astorg's result~\cite{Astorg} on the Teichm\"uller embedding in the space of polynomials. A bridge to connect these results is an extension of Baranski's result~\cite[Corollary 9]{Baranski} on straightening of GPL maps, where we removed an extra condition. Beyond the quadratic-like setting, the known results, to the best of our knowledge, are restricted to maps without invariant line field on the filled Julia set; see ~\cite{ALM, Smania}.

The rest of the paper is organized as follows. In \S~\ref{sec:space}, we prove Proposition~\ref{prop:cpxstr}. In \S~\ref{sec:markedclass}, we introduce marked hybrid class, and show that each connected component $\mathcal{H}_*(f,\mathcal{V})$ lies in such a class. In \S~\ref{sec:lyubichdecomp}, we extend the Lyubich horizontal-vertical decomposition of the tangent space to the GPL setting. In \S~\ref{sec:codim}, we study the codimension of the horizontal space.  In \S~\ref{sec:hybridclass}, we use the `mating' map to prove Theorem~\ref{thm:hybridclass}. 
\section{Complex structure of the space of GPL germs}\label{sec:space}
This section is devoted to the proof of Proposition~\ref{prop:cpxstr}.
For each bounded open subset $W$ of $\C$, let
$$\mathcal{P}_W=\{f\in \mathcal{B}_W: K_*(f)\subset W\},$$
where $K_*(f)=K_*(f,W)$ is as in (\ref{eqn:K*fW}). 

\begin{lemma} \label{lem:K*cont}
\begin{enumerate}
\item If $f_n\to f$ in $\mathcal{B}_W$, $z_n\in K_*(f_n)$ and $z_n\to z_0$, then 
$z_0\in K_*(f)$.
\item The set $\mathcal{P}_W$ is open in $\mathcal{B}_W$.
\end{enumerate}
\end{lemma}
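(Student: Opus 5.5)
For part (1), the plan is a direct induction on $n$ using uniform convergence together with continuity of $f$ on $\overline W$. Since $z_n\in K_*(f_n)\subset\overline W$ and $\overline W$ is closed, $z_0\in\overline W$. We claim that for each $k\ge 1$ we have $f_n^k(z_n)\to f^k(z_0)$ and $f^k(z_0)\in\overline W$.

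For $k=1$, write
$$|f_n(z_n)-f(z_0)|\le \|f_n-f\|_W+|f(z_n)-f(z_0)|.$$
The right-hand side tends to $0$ because $f$ is continuous on $\overline W$. Since $f_n(z_n)\in\overline W$ and $\overline W$ is closed, the limit $f(z_0)$ lies in $\overline W$.

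For the inductive step, assume $w_n:=f_n^k(z_n)\in\overline W$ and $w_n\to w_0:=f^k(z_0)\in\overline W$. The same estimate applied to $w_n,w_0$ gives $f_n(w_n)\to f(w_0)$, and closedness of $\overline W$ again puts the limit in $\overline W$. Here it matters that $w_0\in\overline W$, since $f$ is only defined there. Hence $f^k(z_0)\in\overline W$ for all $k\ge1$, that is, $z_0\in K_*(f)$.

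For part (2), argue by contradiction using compactness. Suppose $f\in\mathcal P_W$ but there are $f_n\to f$ in $\mathcal B_W$ with $f_n\notin\mathcal P_W$. Then there exist points $z_n\in K_*(f_n)\setminus W\subset\partial W$. Since $W$ is bounded, $\partial W$ is compact, so after passing to a subsequence $z_n\to z_0\in\partial W$. Part (1) then gives $z_0\in K_*(f)\subset W$, which contradicts $z_0\in\partial W$ because $W$ is open. Hence some neighborhood of $f$ lies in $\mathcal P_W$, so $\mathcal P_W$ is open.

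There is no serious obstacle here. The only care needed in part (1) is to establish $f^k(z_0)\in\overline W$ at each stage before evaluating $f$ at that point, which the inductive order above handles. In part (2), note that the empty set trivially satisfies $K_*\subset W$, so a map $f_n\notin\mathcal P_W$ must actually have a point of $K_*(f_n)$ in $\partial W$, which is what the argument uses.
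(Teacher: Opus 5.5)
Your proof is correct and follows the paper's argument. In part (1) you pass to the limit in $f_n^k(z_n)\in\overline W$ for each fixed $k$; your induction makes explicit the convergence $f_n^k(z_n)\to f^k(z_0)$, which the paper uses tacitly. Part (2) is the same contradiction: you take points $z_n\in K_*(f_n)\cap\partial W$, extract a subsequence converging in the compact set $\partial W$, and apply part (1).
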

\begin{proof}
(1) For any $k=1,2,\dots$ and any $n$,
\begin{equation}\label{eqn:fnkzn}
f_n^k(z_n)\in \overline{W}.
\end{equation}
For each fixed $k$, letting $n\to\infty$ in (\ref{eqn:fnkzn}), we obtain 
$f^k(z_0)\in \overline{W}$. Thus $z_0\in K_*(f)$.

(2) Arguing by contradiction, assume that there exists $f\in \mathcal{P}_W$ which is not in the interior of $\mathcal{P}_W$. Then there exists a sequence $\{f_n\}_{n=1}^\infty \subset \mathcal{B}_W$ which converges to $f$ such that for each $n=1,2,\ldots$, there exists $z_n\in K_*(f_n)\cap \partial W$. 
Passing to a subsequence, we may assume that $z_n\to z_0$. 
By (1), $z_0\in K_*(f)$. Since $z_0\in \partial W$, $K_*(f)\not\subset W$, which contradicts the fact $f\in \mathcal{P}_{W}$. 
\end{proof}

\begin{lemma} \label{lem:smallglp}
Let $f\in \mathcal{Q}_W$. Then there is a GPL $f:U\to V$ such that the following hold:
\begin{itemize} 
\item $K(f:U\to V)=K_*(f)$; 
\item $U\Subset W$; 
\item $f$ has no critical point in $\overline{U}\setminus K_*(f)$. 
\end{itemize}
\end{lemma}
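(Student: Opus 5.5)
Starting from the GPL restriction $f:U_1\to V_1$ that comes with the definition of $\mathcal{Q}_W$ (so $K(f:U_1\to V_1)=K_*(f)$), the plan is to shrink it by pulling back. Set $K=K_*(f)$. We have $K\subset U_1$ compact, and $f$ extends holomorphically near $\overline{U_1}$. The critical points of $f$ in $\overline{U_1}$ are finitely many, since $\overline U_1$ is compact and $f$ is nonconstant on each component of $U_1$. Hence $f$ has only finitely many critical points in $U_1\setminus K$. Each of these escapes: there is $n$ with $f^n(c)\notin U_1$. Let $N$ be the maximal such escape time, over all of them. We then take $V$ to be a finite union of Jordan disks with real analytic boundaries, $V\subset V_1$, chosen so that the following hold:

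\begin{enumerate}
\item $V$ is a small neighbourhood of $K$, with each component meeting $K$.
\item $\partial V$ avoids the orbits $\{f^j(c)\}_{0\le j\le N}$ of the escaping critical points.
\item $\overline{V}\subset W$.
\end{enumerate}

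Concretely, cover $K$ by a union of small disks in $V_1\cap W$ and take the components meeting $K$. Then smooth the boundaries to real-analytic Jordan curves: use level sets of a smooth exhaustion or a Green-function-type construction, keep only the components that meet $K$, and fill in the holes. This keeps the components Jordan disks.

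The main step is to define $U$ as the union of the components of $f^{-k}(V)\cap U_1$ (with $k$ large, iterated pullback) that meet $K$, and to show that $f:U\to f(U)$ works. The cleanest route is to take $U=$ components of $f^{-1}(V)\cap U_1$ meeting $K$, with $V$ chosen as components of $f^{-m}(V_1)$ meeting $K$ for $m$ large. Since $\bigcap_m f^{-m}(\overline{V_1})\cap \overline{U_1}=K$ (using $K_*(f)=K$), these nested pullbacks shrink to $K$ in the Hausdorff sense. Thus for $m$ large they lie in any prescribed neighbourhood of $K$, in particular in a compact subset of $W$ away from the finitely many escaping critical points. Pullbacks of Jordan disks with real-analytic boundary under proper maps, whose boundaries contain no critical values, are again disks with real analytic boundary. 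To arrange the no-critical-value condition, perturb $V_1$ slightly first, for example replacing it by a slightly smaller equipotential-type subdomain, so that $\partial V_1$ misses the finite set of critical values of $f^{m+1}$. To guarantee simple connectivity I would use the maximum principle: a component $D$ of $f^{-1}(V')$ with $V'$ a Jordan disk and $f:D\to V'$ proper cannot have holes containing points mapping outside $V'$ unless $f$ has a pole. Since $f$ is holomorphic on a disk containing the hole (the hole lies in a component of $U_1$, which is a Jordan disk), any bounded complementary component maps into $V'$, so $D$ is simply connected.

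Finally I verify the required properties. $U\Subset V$ follows from $U\Subset U_1$-type nesting; $U$ is the next pullback of $V$, and $\overline{U}\subset V$ holds because $\overline{f^{-(m+1)}(V_1)}\cap\overline{U_1}\subset f^{-m}(V_1)$ on components meeting $K$, by the standard nesting $U_1\Subset V_1$. Properness is automatic for components of preimages. The condition $K(f:U\to V)=K$ holds because $K\subset U$ is invariant and any point whose full forward orbit stays in $U\subset U_1$ lies in $K_*(f)$. There are no critical points on $\partial U$ or in $\overline U\setminus K$, since $\overline U$ is inside the chosen neighbourhood that avoids the finitely many critical points outside $K$. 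I expect the main obstacle to be the simultaneous arrangement of a real-analytic Jordan boundary and the nesting $U\Subset V$. To handle it I would perturb $\partial V_1$ inward along a real-analytic family of curves that avoid the critical values, before taking pullbacks.
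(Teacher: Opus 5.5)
Your final route is correct and is exactly the paper's argument: pull the given GPL $f:U_1\to V_1$ back repeatedly, note that the closures of the nested pullbacks decrease to $K_*(f)$, and take $U$, $V$ to be two consecutive pullbacks of large depth, so that $\overline{U}\subset W$ avoids the finitely many critical points outside $K_*(f)$. Your extra steps fill in details the paper's short proof leaves implicit: you first shrink $V_1$ so its boundary misses the finitely many relevant critical values, which keeps the pulled-back boundaries real-analytic Jordan curves, and you give the maximum-principle argument for simple connectivity; by contrast, the preliminary idea of building $V$ from a cover by small disks is unnecessary and would not by itself give $U\Subset V$.
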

\begin{proof} By definition, $f$ has a GPL restriction $f:V'\to V$ with $K(f:V'\to V)=K_*(f)$. Define $V_0=V$, 
and for each $k\ge 1$, define inductively $V_k$ to be the union of components of $f^{-1}(V_{k-1})\cap V'$. Then $K(f:V_k\to V_{k-1})=K(f:V'\to V)=K_*(f)$ and $\overline{V}_k$ is a descending sequence of compact sets with $\bigcap_{k=0}^\infty \overline{V_k}=K_*(f)$. Take $k$ sufficiently large such that $\overline{V}_k\setminus K_*(f)$ contains no critical point of $f$ and such that $\overline{V}_k\Subset W$. Let $U=V_k$ and $V=V_{k-1}$. 
\end{proof}
Given a domain $\mathcal{V}$ in a complex Banach manifold $\mathcal{B}$ with a base point $\ast\in \mathcal{V}$ and a subset $E$ of $\C$, a holomorphic motion of $E$ over $(\mathcal{V},0)$ is a family of injections $h_\lambda:E\to \C$, $\lambda\in \mathcal{V}$, such that $h_*=id_E$, $z\mapsto h_\lambda(z)$ is holomorphic in $\lambda$ for each $z\in E$. By Ma\~n\'e-Sad-Sullivan's $\lambda$-lemma~\cite{MSS}, $(\lambda,z)\mapsto h_\lambda(z)$ is continuous. By a theorem due to Bers-Royden~\cite{BR}, if $\mathcal{V}=\mathcal{B}_r$, then a holomorphic motion of $E$ over $(\mathcal{B}_r,\ast)$ extends to a holomorphic motion of the entire complex plane $\C$ over $(\mathcal{B}_{r/3},\ast)$, where $\mathcal{B}_r$ denotes the open ball in $\mathcal{B}$ centere at $\ast$ and of radius $r$. Furthermore, $h_\lambda$ is a holomorphic motion of $\C$ over $(\mathcal{B}_r,\ast)$, then each $h_\lambda$ is a qc homeomorphism of $\C$, the complex dilatation $\mu_\lambda$ depends holomorphically on $\lambda$, and $\|\mu_\lambda\|_\infty\le |\lambda|/r.$

\begin{lemma}\label{lem:persistglp} Let $f\in \mathcal{Q}_W$ and let $f:U\to V$ be a GPL restriction as in Lemma~\ref{lem:smallglp}. Then there is a neighborhood $\mathcal{V}_W$ of $f$ in $\mathcal{B}_W$,  and a holomorphic motion $h_g$ of $\C$ defined over $(\mathcal{V}_W, f)$ such that 
\begin{equation}\label{eqn:ghg}
g(h_g(z))=f(z), \,\, z\in \partial U, g\in \mathcal{V}_W.
\end{equation}
\end{lemma}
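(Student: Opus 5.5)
The plan is to first build a holomorphic motion of $\partial U$ over a neighborhood of $f$ using the implicit function theorem, and then extend it to all of $\C$ with the Bers--Royden theorem quoted above.

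\textbf{Step 1: local inversion at each boundary point.} By the choice of $U$ in Lemma~\ref{lem:smallglp}, $f$ has no critical point on $\overline{U}\setminus K_*(f)\supset\partial U$. Also $\overline U\Subset W$, so $f$ is holomorphic on a neighborhood of $\overline U$. Fix $z_0\in\partial U$. Consider
$$F(g,w)=g(w)-f(z_0)$$
for $g\in\mathcal{B}_W$ and $w$ near $z_0$. Evaluation $g\mapsto g(w)$ is a bounded linear functional, and $g(w)$ is holomorphic in $w$. By Cauchy estimates on a fixed neighborhood of $\overline U$ inside $W$, $F$ is jointly holomorphic. Since $\partial_w F(f,z_0)=f'(z_0)\neq0$, the holomorphic implicit function theorem gives a unique solution $w=h_g(z_0)$ near $z_0$ of $g(w)=f(z_0)$, holomorphic in $g$, for $g$ close to $f$.

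\textbf{Step 2: making the construction uniform along $\partial U$.} Compactness of $\partial U$ and a quantitative version of the previous step will do this. Choose $\delta>0$ with $|f'|\ge 2m>0$ on the $2\delta$-neighborhood $N$ of $\partial U$, and with $f$ injective on every disk $D(z,2\delta)$, $z\in\partial U$. Then for $\|g-f\|_W<\epsilon$ small, the Cauchy estimates give $|g'-f'|<m$ on $N$. By Rouch\'e's theorem, $g(w)=f(z)$ then has exactly one solution $h_g(z)\in D(z,\delta)$ for every $z\in\partial U$.

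Three properties follow.
\begin{itemize}
\item Holomorphy in $g$: $h_g(z)$ is given by the residue integral $\frac1{2\pi i}\oint w\,g'(w)/(g(w)-f(z))\,dw$ over $\partial D(z,\delta)$.
\item Base point: $h_f=\mathrm{id}$.
\item Equation (\ref{eqn:ghg}) holds by construction.
\end{itemize}

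\textbf{Step 3: injectivity, which I expect to be the main obstacle.} Let $z\ne z'$ in $\partial U$.
\begin{itemize}
\item If $|z-z'|<\delta$: both $h_g(z)$ and $h_g(z')$ lie in $D(z,2\delta)$, where $g$ is injective (by the derivative bound and convexity, $\mathrm{Re}\,(g'/f'(z))>0$ there after shrinking $\delta$). Since $f(z)\ne f(z')$ by injectivity of $f$ there, we get $h_g(z)\ne h_g(z')$.
\item If $|z-z'|\ge\delta$: the points stay apart once $\epsilon$ is small, because $|h_g(z)-z|\to0$ uniformly in $z$ as $g\to f$, by continuity of the root in Rouch\'e's argument.
\end{itemize}
Thus $h_g$ is a holomorphic motion of $\partial U$ over a ball $\mathcal{B}_r$ centered at $f$.

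\textbf{Step 4: extension to $\C$.} Applying the Bers--Royden theorem yields an extension to a holomorphic motion of $\C$ over $\mathcal{B}_{r/3}$. Take $\mathcal{V}_W=\mathcal{B}_{r/3}$.
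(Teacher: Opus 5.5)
Your proposal is correct and follows essentially the paper's route: Rouch\'e's theorem on small disks about the points of $\partial U$, holomorphic dependence of the root on $g$, injectivity, then Bers--Royden. The only real difference is the injectivity step. You use a direct uniform estimate: in effect, $h_g(z)=h_g(z')$ forces $f(z)=f(z')$, which is impossible for $|z-z'|<2\delta$, and $|h_g(z)-z|<\delta/2$ separates the far-apart pairs. The paper instead runs an open--closed argument for each pair over the connected ball $\mathcal{V}_W$.

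One point is worth adding to your Step~4. The paper first shrinks the ball so that $h_g(\partial U)\subset V$, and sets $h_g=\mathrm{id}$ on $\C\setminus V$ before invoking Bers--Royden. This is not needed for the lemma as stated. It is, however, what later gives $h_g(V)=V$ and the equivariance $h_g\circ f=g\circ h_g$ on $\partial U$, which are used in Definition~\ref{dfn:equipped} and in \S\ref{sec:lyubichdecomp}. Extending the motion of $\partial U$ alone does not provide these properties.
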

\begin{proof} It suffices to construct a holomorphic motion $h_g$ of $\partial U$ over some neighborhood $\mathcal{V}_W$ of $f$ in $\mathcal{B}_W$ such that (\ref{eqn:ghg}).  Once this done, shrinking $\mathcal{V}_W$ and define $h_g|_{\C\setminus V}=id$, we obtain a holomorphic motion of $(\C\setminus V)\cup \partial U$. Applying Bers-Royden's result gives us the desired holomorphic motion over an even smaller $\mathcal{V}_W$.

Since $\partial U\subset W$ is compact and contains no critical point of $f$, there exists $r>0$ such that 
\begin{itemize}
\item $K':=\bigcup_{z\in \partial U} \overline{B(z,r)}$ is a compact subset of $W$; 
\item for each $z\in \partial U$, $f$ is univalent in $B(z,r)$ and
\item  $$\varepsilon_1:=\inf_{|\zeta-z|\le r} |f'(\zeta)|>0, \, \varepsilon_2:=\inf_{|\zeta-z|=r} |f(z)-f(\zeta)|>0.$$ 
\end{itemize}
Let $\mathcal{V}_W$ be the $\varepsilon$-neighborhood of $f$ in $\mathcal{B}_W$, where $\varepsilon<\varepsilon_2$ is a small positive constant. Then for any $g\in \mathcal{V}_W$, $|g(\zeta)-f(\zeta)|<\varepsilon_2$ holds for any $\zeta\in \partial B(z, r)$. 
By Rouch\'e's theorem, $g(w)=f(z)$ has a unique solution in $B(z,r)$, denoted by $h_g(z)$. When $\varepsilon$ is taken small enough, $\sup_{\zeta\in K'}|g'(\zeta)-f'(\zeta)|<\varepsilon_1$, so $g'(\zeta)\not=0$ in $K'$. By the Implicit Function Theorem, $g\mapsto h_g(z)$ is holomorphic in $g$. 

To complete the proof, it remains to show that $h_g|_{\partial U}$ is injective in for each $g\in \mathcal{V}_W$. To this end, it suffices to show that for any distinct $z_1,z_2\in \partial U$, $E=\{g\in \mathcal{V}_W: h_g(z_1)=h_g(z_2)\}$ is open. Indeed, since $E$ is clearly closed in $\mathcal{V}_W$ and $\mathcal{V}_W$ is connected, the openness of $E$ would imply $E=\mathcal{V}_W$ or $\emptyset$. The former case does not happen since $f\not\in E$. To show that $E$ is open, suppose $g_0\in E$. Since $g(h_{g}(z_i))=f(z_i)$, we have $w_0:=f(z_1)=f(z_2)$ and $h_g(z_1), h_g(z_2)$ are both solutions of the equation $g(\zeta)=w_0$. When $g$ is close to $g_0$, $h_g(z_i)$ are close to $\zeta_0:=h_{g_0}(z_1)$. Since $g_0'(\zeta_0)\not=0$, it follows that $h_g(z_1)=h_g(z_2)$ when $g$ is sufficiently close to $g_0$. This shows that $g_0$ is in the interior of $\mathcal{V}_W$. 
\end{proof}
\begin{proof} [Proof of Proposition~\ref{prop:cpxstr}]
(1) Let $f\in \mathcal{Q}_W$ and let $f:U\to V$ be a GPL restriction given by Lemma~\ref{lem:smallglp} and let $h_g$ be the holomorphic motion of $\partial U$ over $(\mathcal{V}_W, f)$ given by Lemma~\ref{lem:persistglp}. Let $U_g=h_g(U)$. Then $g: U_g\to V$ is a GLP. Since $K_*(f)\subset U$, by Lemma~\ref{lem:K*cont}, shrinking $\mathcal{V}_W$ if necessary, we may assume that for $g\in \mathcal{V}_W$, $K_*(g)\subset U_g$. It follows that $K_*(g)\subset K(g:U_g\to V)$ and hence these two sets are equal. 
%

(2) Suppose that $j_Wf_1=j_Wf_2$ holds for some $f_1, f_2\in \mathcal{Q}_W$. Then $f_1=f_2$ holds in a neighborhood $W'$ of $K_*(f_1)=K_*(f_2)$. Since $W'$ intersects each component of $W$, it follows that $f_1=f_2$ in $W$. 
 
(3) Arguing by contradiction, assume that 
there exists $a\in  \widehat{W}\setminus \overline{W}$.
 
For each $g\in \mathcal{V}_W$, let $\widehat{g}\in \mathcal{Q}_{\widehat{W}}$ be such that $j_{\widehat{W}}\widehat{g}=j_W g$. 
Take $f\in \mathcal{V}_W$. Since each component of $W$ intersects the compact subset $K_*(f,W)$, the number of connected components of $W$ is finite. Similarly, the number of components of $\widehat{W}$ is also finite. So there exists $r>0$ such that 
\begin{enumerate}
  \item[(i)] for each $z\in K_*(f,W)=K_*(\widehat{f},\widehat{W})$, $B(z, 2r)\subset W\cap \widehat{W}$.
  \item[(ii)] if $z_1, z_2\in K_*(f)$ belong to different components of $W$, then $|z_1-z_2|>2r$. 
\end{enumerate}
By Lemma~\ref{lem:K*cont}, there exists a small neighborhood $\mathcal{U}_W$ of $f$ in $\mathcal{V}_W$ such that 
\begin{enumerate}
\item [(iii)] for any $g\in  \mathcal{U}_W$, 
$K_*(g)$ is contained in the $r$-neighborhood of $K_*(f).$
\end{enumerate}
Consider $f_t(z)=f(z)+t/(z-a)$. Let $\varepsilon>0$ be a constant such that $f_t\in \mathcal{U}_W$ when $|t|<\varepsilon$. 

Fix $t$ with $0\le |t|<\varepsilon.$ We shall show that $(\widehat{f}_t-\widehat{f})(z)=t/(z-a)$ holds for all $z$ in each component $\widehat{U}$ of $\widehat{W}$, which is a contradiction since $t/(z-a)$ is not holomorphic in $\widehat{W}$.  
Indeed, by definition of $\mathcal{Q}_{\widehat{W}}$, there exists $\widehat{z}_0\in K_*(\widehat{f}_t,\widehat{W})\cap \widehat{U}$. By (iii), there exists $z_0\in K_*(f,W)$ such that $\widehat{z}_0\in B(z_0,r)$. 
By our choice of $r$, $B(z_0,r)\subset \widehat{U}\cap W$. 
Since $f_t=\widehat{f}_t$ holds in a neighborhood of $\widehat{z}_0\in K_*(f_t,W)$, it holds in $B(z_0,r)$ as well. Since $f=\widehat{f}$ in a neighborhood of $z_0\in K(f,W)$, it also holds on $B(z_0,r)$. It follows that $\widehat{f}_t(z)-\widehat{f}(z)=f_t(z)-f(z)=t/(z-a)$ holds in $B(z_0,r)$. Therefore, 
$\widehat{f}_t(z)-\widehat{f}(z)=t/(z-a)$ holds in $\widehat{U}$.  
\end{proof}

\section{Equipped Banach neighborhoods and marked hybrid classes}\label{sec:markedclass}
In this section, we define an equipped Banach neighborhood, which, roughly speaking, means that an open set in $\mathcal{Q}_W$ where we can choose GPL representatives $f:U_f\to V_f$ such that the boundary dynamics moves holomorphically. The main result is Proposition~\ref{prop:connected set}, which asserts that in such a neighborhood, each connected set in the hybrid class is contained in a marked hybrid class. 

We start with the precise definition of an equipped Banach neighborhood. 
\begin{definition}\label{dfn:equipped}
A non-empty open subset $\mathcal{V}_W$ of $\mathcal{Q}_W$ is called {\em equipped} if there is a holomorphic motion $h_f$ of $\C$ based on $(\mathcal{V}_W, f_0)$ for some $f_0\in \mathcal{V}_W$, and for each $f\in \mathcal{V}_W$, $j_Wf$ has a GPL representative $f: U_f\to V_f$ such that 
\begin{itemize}
\item $f_0$ has no critical point in $\overline{U}_{f_0}\setminus K_*(f_0)$;
\item $h_f(U_{f_0})=U_f$ and $h_f(V_{f_0})=V_f$;
\item $h_f\circ f_0=f\circ h_f$ on $\partial U_{f_0}$;
\item there is a compact subset $L$ of $\C$ such that $L\supset V_f$ and $h_f|_{\C\setminus L}=id$.
\end{itemize}   
\end{definition}
We say that such a $\mathcal{V}_W$ is an equipped neighborhood of $f_0\in \mathcal{Q}_W$. 
By Lemmas~\ref{lem:smallglp} and~\ref{lem:persistglp}, for each $f_0\in \mathcal{Q}_W$, a sufficiently small neighborhood $\mathcal{V}_{W}$ of $f_0|_{W}$ can be equipped. We can even choose the holomorphic motion $h_f$ such that $h_f|_{\C\setminus V_{f_0}}=id$. 
Note that for any $f\in \mathcal{V}_{W}$ with $d(j_Wf)=d(j_W f_0)$,
$\mathcal{V}_W$ is also an equipped neighborhood of $f|_W$. 

Given an equipped neighborhood $\mathcal{V}_W$ of $f_0$ as above, for each $f\in \mathcal{V}_W$, we define $\mathcal{H}_\#(f, \mathcal{V}_W)$, called the {\em marked hybrid class} of $f$, to be the collection of $g\in \mathcal{V}_W$ for which $h_g\circ h_{f}^{-1}|_{\C\setminus U_{f}}$ extends to a qc map $\psi:\mathbb{C}\to\mathbb{C}$ such that 
\begin{itemize}
\item $\bar{\partial} \psi=0$, a.e. on $K_*(f)$;
\item $\psi\circ f=g\circ \psi$ holds on $U_{f}$. 
\end{itemize}

Let us also introduce the following notation:
$$\mathcal{H}(f,\mathcal{V}_W)=\{g\in \mathcal{V}_W: j_Wg \mbox{ is hybrid equivalent to } j_Wf\},$$
and $\mathcal{H}_*(f,\mathcal{V}_W)$ is the connected component of $\mathcal{H}(f,\mathcal{V}_W)$ which contains $f$. 

\begin{proposition}\label{prop:connected set} 
Let $\mathcal{V}_W$ be a neighborhood of $f_0\in \mathcal{Q}_W$ which is equipped with a holomorphic motion $h_f$ as in Definition~\ref{dfn:equipped}. Then for $f\in \mathcal{V}_W$ with $d(j_W f)=d(j_W f_0)$, 
$\mathcal{H}_*(f,\mathcal{V}_W)$ is the connected component of $\mathcal{H}_\#(f, \mathcal{V}_W)$ which contains $f$. 
\end{proposition}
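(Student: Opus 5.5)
The plan is to prove two inclusions: first that the component of $\mathcal{H}_\#(f,\mathcal{V}_W)$ containing $f$ lies in $\mathcal{H}_*(f,\mathcal{V}_W)$, and then that $\mathcal{H}_*(f,\mathcal{V}_W)$ is covered by a single marked class.

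\emph{The easy inclusion.} Let $g\in\mathcal{H}_\#(f,\mathcal{V}_W)$, with qc map $\psi$. Then $\psi$ is a qc map of $\C$ with $\bar\partial\psi=0$ a.e. on $K_*(f)$, and it conjugates $f$ to $g$ on $U_f$. Since $\psi=h_g\circ h_f^{-1}$ off $U_f$, it sends $U_f$ onto $U_g$. A point stays in $U_f$ under all iterates of $f$ if and only if its image stays in $U_g$ under all iterates of $g$, so $\psi(K_*(f))=K_*(g)$. Hence $j_Wg$ is hybrid equivalent to $j_Wf$, which gives $\mathcal{H}_\#(f,\mathcal{V}_W)\subset\mathcal{H}(f,\mathcal{V}_W)$. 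The component of $\mathcal{H}_\#(f,\mathcal{V}_W)$ containing $f$ is therefore a connected subset of $\mathcal{H}(f,\mathcal{V}_W)$ containing $f$, so it lies in $\mathcal{H}_*(f,\mathcal{V}_W)$.

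\emph{The reverse inclusion.} I will show that the relation ``$g'\in\mathcal{H}_\#(g,\mathcal{V}_W)$'' restricted to $\mathcal{H}(f,\mathcal{V}_W)$ is an equivalence relation whose classes are relatively open in $\mathcal{H}(f,\mathcal{V}_W)$. First, it is an equivalence relation: composing and inverting qc maps that extend the boundary markings $h_{g'}\circ h_g^{-1}$ gives again such extensions, and the hybrid condition is preserved. Second, once the classes are relatively open, the complement of each class is a union of open classes. So each class is open and closed in $\mathcal{H}(f,\mathcal{V}_W)$, and the connected set $\mathcal{H}_*(f,\mathcal{V}_W)$ lies in one class, namely $\mathcal{H}_\#(f,\mathcal{V}_W)$. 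Being connected and containing $f$, it then lies in the component of $\mathcal{H}_\#(f,\mathcal{V}_W)$ containing $f$.

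\emph{Openness, the main step.} Fix $g\in\mathcal{H}(f,\mathcal{V}_W)$ and $g'\in\mathcal{H}(f,\mathcal{V}_W)$ close to $g$, and let $\varphi$ be a hybrid conjugacy from $j_Wg$ to $j_Wg'$. I want to produce $\psi$ extending $h_{g'}\circ h_g^{-1}|_{\C\setminus U_g}$.

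1. Pull back the marking. Start from $\psi_0=h_{g'}h_g^{-1}$ on $\overline{V_g}\setminus U_g$; this conjugates $g$ to $g'$ on $\partial U_g$. Define $\psi_{n+1}$ on $U_g^{n}\setminus U_g^{n+1}$, where $U_g^n=g^{-n}(U_g)$ within $U_g$, as a lift of $\psi_n$ under $g$ and $g'$.

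2. Role of the criticality hypothesis. Because $d(j_Wf)=d(j_Wf_0)$ and hybrid equivalence preserves criticality, $g$ and $g'$ have no critical points in $U\setminus K_*$; the point is that none escapes from near $K_*$ for maps in $\mathcal{V}_W$. So the lifts are coverings of annular regions, and they exist once a branch is chosen.

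3. Choice of branch. Choose the branch at each step by matching the action of $\varphi$ on the components and boundary accesses of $U_g^n$. This combinatorial datum is discrete, and it varies continuously with $g'$ because the holomorphic motion $h$ is continuous. Hence it is constant for $g'$ near $g$: for $g'=g$ the identity is both a hybrid conjugacy and the marking.

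4. Gluing and passing to the limit. With consistent branches, at a deep level $n$ the map $\psi_n$ and $\varphi$ induce the same correspondence of boundary components of $U_g^n$. The dilatation of the $\psi_n$ is uniformly bounded, since it only comes from pullbacks of $\psi_0$ by holomorphic maps. Taking limits, and using that $\bigcap_n \overline{U_g^n}=K_*(g)$ has its interior dynamics governed by $\varphi$, produces a qc $\psi$. On $K_*(g)$ it agrees with a hybrid conjugacy, the interior of $K_*$ being reached by replacing $\varphi$'s values there, so $\bar\partial\psi=0$ a.e. on $K_*(g)$. This gives $g'\in\mathcal{H}_\#(g,\mathcal{V}_W)$.

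The hardest step is justifying that the homotopy/branch class of the hybrid conjugacy relative to the boundary marking is locally constant. I would try to show directly that any two hybrid conjugacies between $g$ and $g'$ induce the same branch data, via a uniform modulus bound on the annuli $U_g^n\setminus U_g^{n+1}$. If that fails, I would instead restrict to $g'$ so close to $g$ that $\varphi$ can be chosen close to the identity near $\partial U_g$, using Lemma~\ref{lem:K*cont}.
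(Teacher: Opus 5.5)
Your first inclusion is correct, and marked equivalence is indeed an equivalence relation. The gap is the openness step, on which the entire reverse inclusion rests.

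To pull back $h_{g'}\circ h_g^{-1}$ through every level (your steps 1--3), you need the following at each depth $n$: the piece which $\psi_n$ assigns to the piece containing the critical values of $g$ must actually contain the critical values of $g'$. Otherwise $g'$ over the complementary region is not a covering of the matching type, and no lift exists, whatever branch you choose. Any marked conjugacy preserves this infinite combinatorial datum. Closeness of $g'$ to $g$, however, controls it only up to a finite depth depending on $\|g'-g\|_W$. So a $g'\in\mathcal{H}(f,\mathcal{V}_W)$ near $g$ whose critical orbit enters a different marked piece at some deep level is not in $\mathcal{H}_\#(g,\mathcal{V}_W)$, and nothing in your argument excludes such $g'$.

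Reading the branches off $\varphi$ does not help, for several reasons:
\begin{enumerate}
\item $\varphi$ has no continuous or canonical dependence on $g'$ (the paper explicitly remarks that the dependence of $\varphi_g$ on $g$ is unclear), and continuity of $h$ says nothing about $\varphi$.
\item The datum is an infinite sequence of discrete choices, not a single one.
\item Your fallback via Lemma~\ref{lem:K*cont} gives only upper semicontinuity of $K_*$, not a hybrid conjugacy close to the marking near $\partial U_g$.
\end{enumerate}
Relative openness of marked classes inside $\mathcal{H}(f,\mathcal{V}_W)$ is a stronger statement than the proposition. Your argument also never uses the one hypothesis that carries the needed information: connectedness of $\mathcal{H}_*(f,\mathcal{V}_W)$.

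The paper uses connectedness directly. Along the connected set $E=\mathcal{H}_*(f_0,\mathcal{V}_W)$ the criticality is constant, so the critical values move continuously without ever crossing $\partial V^n_g$. The pull-back therefore works at all depths simultaneously, producing the marked conjugacy $H_g$ on $\C\setminus K_*(f_0)$ (Lemma~\ref{lem:hgextension}).

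Your step 4 also hides a second real difficulty: extending such a map across $K_*$. A map that is qc off $K_*$ and agrees with some hybrid conjugacy on the nontrivial components need not glue to a qc map of $\C$. One obstruction is the infinitely many (often point) components. Another is that $\varphi$ and the marking may permute the periodic critical components differently. The paper handles this in two cases:
\begin{enumerate}
\item With no periodic critical components, it applies the rigidity theorem \cite[Theorem 1.3]{PT}.
\item Otherwise, it glues via Lemma~\ref{lem:connectedJuliasets}, after a pigeonhole argument over the finitely many permutations $\beta_g\circ\alpha_g^{-1}$ and boundary lifts on $\partial U'_{f_0}$.
\end{enumerate}
This shows that $E$ meets only finitely many marked classes, each closed in $E$ by Lemma~\ref{lem:closedH}, so connectedness forces $E\subset\mathcal{H}_\#(f_0,\mathcal{V}_W)$. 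In short, you need ``finitely many closed classes on a connected set'' rather than ``open classes'', and you need the rigidity input to pass across $K_*$.
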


We need a few lemmas for the proof of the proposition. 

\begin{lemma}\label{lem:closedH}
Let $\mathcal{V}_W$ be an equipped neighborhood of $f_0\in \mathcal{Q}_W$. Then 
$\mathcal{H}_\#(f_0,\mathcal{V}_W)$ is a closed subset of $\mathcal{V}_W$.
\end{lemma}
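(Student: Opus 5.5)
We show that $\mathcal{H}_\#(f_0,\mathcal{V}_W)$ is closed by a compactness argument for quasiconformal maps. Let $g_n\in \mathcal{H}_\#(f_0,\mathcal{V}_W)$ with $g_n\to g\in\mathcal{V}_W$ in $\mathcal{B}_W$; we must show $g\in\mathcal{H}_\#(f_0,\mathcal{V}_W)$. For each $n$ let $\psi_n:\C\to\C$ be the qc extension of $h_{g_n}|_{\C\setminus U_{f_0}}$ (here $h_{f_0}=id$), with $\bar\partial\psi_n=0$ a.e. on $K_*(f_0)$ and $\psi_n\circ f_0=g_n\circ\psi_n$ on $U_{f_0}$.

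\textbf{Step 1: uniform dilatation.} The key point is to bound the dilatation of $\psi_n$ independently of $n$. The dilatation $\mu_n$ vanishes on $K_*(f_0)$. Outside $U_{f_0}$, $\psi_n=h_{g_n}$. Because $\mathcal{V}_W$ may be large, the bound there should come from the $\lambda$-lemma applied on a compact subset: $g_n$ and $g$ lie in a small ball around $g$ in $\mathcal{V}_W$, so by the Bers--Royden estimate for the motion restricted near $g$, $h_{g_n}\circ h_g^{-1}$ has dilatation bounded by some $k<1$ for large $n$. Equivalently, the dilatations of $h_{g_n}$ stay in a compact subset of the unit ball.

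On $U_{f_0}\setminus K_*(f_0)$, the conjugacy relation $\psi_n = g_n^{-1}\circ\psi_n\circ f_0$ expresses $\psi_n$ on $f_0^{-k}(V_{f_0}\setminus U_{f_0})\cap U_{f_0}$ as a pullback of $\psi_n|_{V_{f_0}\setminus U_{f_0}}$ by holomorphic maps. Note that $\psi_n$ agrees with $h_{g_n}$ on $\partial U_{f_0}$ and outside $U_{f_0}$, in particular on $V_{f_0}\setminus U_{f_0}$. Since holomorphic pullback preserves dilatation norm, $\|\mu_n\|_\infty\le\|\mu(h_{g_n})|_{V_{f_0}\setminus U_{f_0}}\|_\infty$ on $U_{f_0}\setminus K_*(f_0)$.

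There is a subtlety here: the extension $\psi_n$ is not canonical. However, it is forced to be the pullback on $U_{f_0}\setminus K_*(f_0)$, because every point of that set escapes to $V_{f_0}\setminus U_{f_0}$. This is also where the absence of critical points in $\overline U_{f_0}\setminus K_*(f_0)$ is used: it makes the pullbacks local diffeomorphisms, so no extra dilatation or branching appears. Hence $\sup_n\|\mu_n\|_\infty<1$.

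\textbf{Step 2: compactness.} Each $\psi_n$ equals the identity outside the compact set $L$, so the family is normalized. By compactness of $K$-qc maps, a subsequence converges uniformly to a qc map $\psi$, with $\psi=\lim h_{g_n}=h_g$ on $\C\setminus U_{f_0}$ by continuity of the holomorphic motion (Mañé--Sad--Sullivan). Passing to the limit in $\psi_n\circ f_0=g_n\circ\psi_n$ on $U_{f_0}$ gives $\psi\circ f_0=g\circ\psi$. This limit is legitimate because $\psi_n(U_{f_0})=U_{g_n}$ converges into a compact subset of $W$, and $g_n\to g$ uniformly on $\overline W$.

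\textbf{Step 3: the main obstacle.} The remaining and hardest step is to show that $\bar\partial\psi=0$ a.e. on $K_*(f_0)$, since dilatations do not converge pointwise under uniform limits. I would handle this by observing that the restriction of $\mu_n$ to $\C\setminus K_*(f_0)$ is determined by the holomorphic data $h_{g_n}$ via pullback. Define $\nu_n$ as this pulled-back dilatation on $\C\setminus K_*(f_0)$ and $0$ on $K_*(f_0)$. Then $\nu_n\to\nu$ a.e., where $\nu$ is defined the same way from $h_g$ and $g$. Here we use that $h_{g_n}\to h_g$ in the sense that the dilatations of the Bers--Royden motion depend holomorphically, hence continuously in $L^\infty$, on the parameter, together with $g_n\to g$ locally uniformly.

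With $\nu_n\to\nu$ a.e. and $\sup\|\nu_n\|_\infty<1$, the measurable Riemann mapping theorem with the normalization $\psi_n=id$ near $\infty$ gives $\psi_n\to\psi_\nu$ uniformly. Hence $\psi=\psi_\nu$, whose dilatation vanishes on $K_*(f_0)$. Therefore $g\in\mathcal{H}_\#(f_0,\mathcal{V}_W)$.
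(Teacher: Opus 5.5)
Your proof is correct, but it takes a different route from the paper. The paper never identifies the Beltrami coefficient of the hybrid conjugacies. It uses only that $h_{g_m}\circ h_{g_n}^{-1}$ is $C_n$-qc with $C_n\to 1$, and from this builds $C_n$-qc conjugacies from $g_n$ to $g_m$. Letting $m\to\infty$ gives conjugacies $\psi_{\infty,n}$ from $g_n$ to $g_\infty$, which it composes with the hybrid conjugacies from $f_0$ to $g_n$. The result is a sequence $\theta_n$ of conjugacies from $f_0$ to $g_\infty$ that all coincide with $h_{g_\infty}$ off $K_*(f_0)$ and have dilatation at most $C_n$ on $K_*(f_0)$. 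It then uses an auxiliary qc map $\tau$, carrying the common dilatation off $K_*(f_0)$, to show that the limit is conformal a.e.\ on $K_*(f_0)$. You instead exploit that the conjugacy is canonical: its coefficient $\mu_n$ is $0$ on $K_*(f_0)$, equals $\mu(h_{g_n})$ off $U_{f_0}$, and is the $f_0$-pullback of $\mu(h_{g_n})|_{V_{f_0}\setminus U_{f_0}}$ on the escaping set. So $\psi_n$ is the principal solution for $\mu_n$, which is exactly the mated coefficient $\mu_{f_0,g_n}$ of Lemma~\ref{lem:mating0}. Because the pullback is by the fixed map $f_0$, you actually get $\|\mu_n-\nu\|_\infty\le\|\mu(h_{g_n})-\mu(h_g)\|_{L^\infty(V_{f_0}\setminus U_{f_0})}\to 0$, not merely a.e.\ convergence. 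Here $\nu$ is built from $h_g$ and $f_0$ alone; the map $g$ and the convergence $g_n\to g$ play no role at that point. Continuity of normalized solutions of the Beltrami equation then finishes the argument. Your route is shorter, avoids the double limit and the $\tau$-trick, and shows in passing that the hybrid conjugacy varies continuously with $g$. The paper's route needs only qc compactness and dilatation bounds, never the explicit form of the conjugacies' dilatation. Both rest on the same input: the holomorphic (hence $L^\infty$-continuous) dependence of $\mu(h_g)$ on $g$, which the paper also invokes to get $C_n\to 1$. This continuity also follows directly from the local bound on the dilatation of $h_{g'}\circ h_g^{-1}$ via the composition formula for Beltrami coefficients.
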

\begin{proof} Suppose that $g_n\in \mathcal{H}_\# (f_0,\mathcal{V}_W)$ converges in $\mathcal{B}_W$ to $g_\infty\in \mathcal{V}_W$. We need to show that $g_\infty\in \mathcal{H}_\# (f_0,W)$. To this end, we shall prove that there exists a sequence of qc maps $\theta_n:\C\to \C$ which conjugate 
$f_0:U_{f_0}\to V_{f_0}$ to $g_\infty:U_{g_\infty}\to V_{g_\infty}$ and coincide with $h_{g_\infty}\circ h_{f_0}^{-1}$ outside $K_*(f_0)$ such that the $L^\infty$ norm of the Beltrami coefficient of $\theta_n$ on $K_*(f_0)$ converges to zero. 

Assume for the moment that such a sequence $\theta_n$ exists. Let us show that $g_\infty\in \mathcal{H}_\#(f_0,W)$. Passing to a subsequence, we may assume that $\theta_n$ converges to a qc conjugacy $\theta$ from $f_0:U_{f_0}\to V_{f_0}$ to $g_\infty:U_{g_\infty}\to V_{g_\infty}$. We need to show that $\bar{\partial} \theta=0$ a.e. on $K_*(f_0)$. To this end, let $\tau:\C\to\C$ be the qc map with $\bar{\partial}\tau=\bar{\partial }\theta$ outside $K_*(f_0)$ ad $\bar{\partial }\tau=0$ a.e. on $K_*(f_0)$. The qc map $\theta_n\circ \tau^{-1}$ has maximal dilatation converges to $1$, since $\theta_n=\theta$ outside $\tau(K_*(f_0))$. Thus their limit $\theta\circ\tau^{-1}$ is conformal, which implies that $\bar{\partial}\theta=0$ a.e. on $K_*(f_0)$.  

Now we turn to the proof of existence of $\theta_n$. By ~\cite{BR}, the complex dilatation of $h_f$ depends on $f\in \mathcal{V}_W$ holomorphically. So there exists $C_n\to 1$ such that for $m\ge n$, 
the maximal dilatation of $h_{g_m}\circ h_{g_n}^{-1}$ does not exceed $C_n$. By assumption, $h_{g_m}\circ h_{g_n}^{-1}|_{\C\setminus U_{g_n}}$ extends to a qc conjugacy $\psi_{m,n}$ from $g_n:U_{g_n}\to V_{g_n}$ to $g_m:U_{g_m}\to V_{g_m}$ with $\bar{\partial} \psi_{m,n}=0$ a.e. on $K_*(g_n)$. The maximal dilatation of $\psi_{m,n}$ is at most $C_n$. 
For each $n$, there is a sequence $m_k\to\infty$ such that $\psi_{m_k,n}$ converges to a $C_n$-qc map $\psi_{\infty, n}$ which conjugates $g_n: U_{g_n}\to V_{g_n}$ to $g_\infty: U_{g_{\infty}} \to V_{g_{\infty}}$. Moreover, $\psi_{\infty,n}=h_{g_\infty}\circ h_{g_n}^{-1}$ outside $U_{g_n}$. 
On the other hand, $h_{g_n}\circ h_{f_0}^{-1}|_{\C\setminus U_{f_0}}$ extends to a qc conjugacy $\varphi_n$ from $f_0:U_{f_0}\to V_{f_0}$ to $g_n: U_{g_n}\to V_{g_n}$ and $\bar{\partial} \varphi_n=0$ a.e. on $K_*(f_0)$.  Thus $\theta_n:=\psi_{\infty,n}\circ \varphi_n$ is a qc conjugacy from $f_0:U_{f_0}\to V_{f_0}$ to $g_\infty: U_{g_\infty}\to V_{g_\infty}$. Each $\theta_n$ coincide with $h_{g_\infty}\circ h_{f_0}^{-1}$ outside $U_{f_0}$, hence they coincide outside $K_*(f_0)$. The essential supremum of the complex dilatation of $\theta_n$ on $K_*(f_0)$ is bounded from above by $C_n\to 1$.
\end{proof}

The following two lemmas are based on the well-known pull-back argument. 
\begin{lemma}\label{lem:hgextension}
Let $\mathcal{V}_W$ be an equipped neighborhood of $f_0\in \mathcal{Q}_W$. Then for each $g\in \mathcal{H}_*(f_0,\mathcal{V}_W)$, $h_g|_{\C\setminus U_{f_0}}$ extends to a qc map $H_g:\C\setminus K_*(f_0)\to \C\setminus K_*(g)$ such that $g\circ H_g=H_g\circ f_0$ holds on $U_{f_0}\setminus K_*(f_0)$.
\end{lemma}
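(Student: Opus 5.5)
The plan is to build $H_g$ by the standard pull-back argument on the fundamental annuli, using only the boundary conjugacy $h_g\circ f_0=g\circ h_g$ on $\partial U_{f_0}$ from Definition~\ref{dfn:equipped}. The main issue is that $f_0$ and $g$ may have critical points in $U\setminus K_*$, so the lifts must respect the combinatorics of the critical orbits. Here the hypothesis $g\in\mathcal{H}_*(f_0,\mathcal{V}_W)$ enters, together with the fact that $f_0$ has no critical point in $\overline{U}_{f_0}\setminus K_*(f_0)$.

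First I would check that $g$ also has no critical points in $\overline{U}_g\setminus K_*(g)$, and that $d(j_Wg)=d(j_Wf_0)$. Hybrid equivalence forces the criticality to be the same, since a qc conjugacy near $K$ preserves local degrees at points of $K$. For $g$ near $f_0$, all critical points of $g$ in $U_g$ come from critical points of $f_0$ near $K_*(f_0)$, counted with multiplicity; this uses Hurwitz on a neighborhood of $\overline{U}_{f_0}$, after shrinking $\mathcal{V}_W$ if needed. Since the total is preserved and the critical points of $g$ in $K_*(g)$ already account for $d(j_Wf_0)$, none remain in $U_g\setminus K_*(g)$.

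With this in hand I do the lift. Set $H^0=h_g$ on $\C\setminus U_{f_0}$. Let $A_0=\overline{V}_{f_0}\setminus U_{f_0}$, and let $A_n=f_0^{-n}(A_0)\cap U_{f_0}$ for the appropriate iterated preimages. Then $U_{f_0}\setminus K_*(f_0)=\bigcup_{n\ge1}A_n$, and $f_0^n\colon A_n\to A_0$ is a covering with no critical points. The same holds for $g$ with $A_n(g)$. Inductively, I define $H$ on $A_{n+1}$ as the lift of $H|_{A_n}\circ f_0$ through $g$, that is, $g\circ H=H\circ f_0$, normalized to agree with $h_g$ on $\partial U_{f_0}$.

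The existence of the lift is the main obstacle. Each component of $A_{n+1}$ is a planar domain, possibly non-annular, and the coverings $f_0\colon A_{n+1}\to A_n$ and $g\colon A_{n+1}(g)\to A_n(g)$ must be isomorphic compatibly with $H$. To get this I would use the holomorphic motion. Along a path $g_t$ in $\mathcal{H}_*(f_0,\mathcal{V}_W)$ from $f_0$ to $g$, criticality stays constant, so no critical value ever enters $\overline{V}\setminus K_*$. The sets $A_n(g_t)$ and the lifts $H_{g_t}$ then vary continuously; equivalently, $h_{g_t}$ extends by a holomorphic motion of $\bigcup_{k\le n}A_k$ via the implicit function theorem, as in Lemma~\ref{lem:persistglp}. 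Continuation along the path shows the covering data is constant, so the lift exists at $t=1$. Connectedness of $\mathcal{H}_*$ is what is used here, and a $\lambda$-lemma argument gives the same conclusion.

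Finally, every lift is conformal away from $A_0$. Hence $H$ on $\C\setminus K_*(f_0)$ has dilatation bounded by that of $h_g$, so it is $K$-qc with $K$ uniform in $n$. Continuity across $\partial A_n$ holds by the normalization, so $H_g\colon \C\setminus K_*(f_0)\to\C\setminus K_*(g)$ is a qc homeomorphism extending $h_g|_{\C\setminus U_{f_0}}$, and it satisfies $g\circ H_g=H_g\circ f_0$ on $U_{f_0}\setminus K_*(f_0)$.
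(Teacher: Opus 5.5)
Your plan follows the paper's pull-back argument. The paper lifts puzzle piece by puzzle piece, $H_{g,n}\to H_{g,n+1}$, and uses constancy of $d(j_Wg)$ on $E=\mathcal{H}_*(f_0,\mathcal{V}_W)$ together with connectedness of $E$ to control the critical values. \textbf{The gap} is in the step that makes your lifts exist. You continue along a path $g_t$ in $\mathcal{H}_*(f_0,\mathcal{V}_W)$ from $f_0$ to $g$. But $\mathcal{H}_*(f_0,\mathcal{V}_W)$ is only a connected component of $\mathcal{H}(f_0,\mathcal{V}_W)$, and nothing available at this stage makes it path-connected. The manifold structure that would give this is proved later, and that proof passes through this lemma via Proposition~\ref{prop:connected set}. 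For the same reason there is no holomorphic motion ``over $E$'' for a $\lambda$-lemma, since $E$ is not open.

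\textbf{The fix} is to replace the path by an open--closed argument on $E$. Fix $n$, write $V^k_g=(g|_{U_g})^{-k}(V_g)$, and let $E_n$ be the set of $g\in E$ admitting a homeomorphism $H:\C\setminus V^{n+1}_{f_0}\to\C\setminus V^{n+1}_g$ with $H=h_g$ off $U_{f_0}$ and $g\circ H=H\circ f_0$ on $U_{f_0}\setminus V^{n+1}_{f_0}$.
\begin{itemize}
\item Take $g_*\in E$. By your first step, $g_*$ has no critical point in $\overline{U}_{g_*}\setminus K_*(g_*)$.
\item The implicit function theorem, as in Lemma~\ref{lem:persistglp}, then gives a ball $B$ around $g_*$ in $\mathcal{B}_W$ (radius depending on $n$). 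For $g\in B$ it gives homeomorphisms $\Theta_g:\C\setminus V^{n+1}_{g_*}\to\C\setminus V^{n+1}_g$ equal to $h_g\circ h_{g_*}^{-1}$ off $U_{g_*}$, with $g\circ\Theta_g=\Theta_g\circ g_*$ on $U_{g_*}\setminus V^{n+1}_{g_*}$.
\item The maps $H\mapsto\Theta_g\circ H$ and $H\mapsto\Theta_g^{-1}\circ H$ carry depth-$n$ lifts between $g_*$ and $g$. So $B\cap E$ lies entirely in $E_n$ or entirely outside it.
\end{itemize}
Hence $E_n$ is open and closed in $E$ and contains $f_0$, so $E_n=E$. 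Lifts are unique under the boundary normalization, so the depth-$n$ maps are compatible, and the rest of your argument goes through.

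Two smaller points. First, in your first step you may not shrink $\mathcal{V}_W$, since that changes the set $\mathcal{H}_*(f_0,\mathcal{V}_W)$ in the statement. Shrinking is also unnecessary. Since $g\circ h_g=h_g\circ f_0$ on $\partial U_{f_0}$, each component of $U_g$ is mapped with the same degree as the corresponding component of $U_{f_0}$. Riemann--Hurwitz then gives exactly $d(j_Wf_0)$ critical points of $g$ in $U_g$, for every $g\in\mathcal{V}_W$. Second, the lifts are not conformal off $A_0$. Their dilatation on $A_n$ is the pull-back of that of $h_g$ on $A_0$, which is what gives the uniform bound.

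On the positive side, you frame the obstruction as an isomorphism problem for unbranched coverings of the generally non-annular regions $A_{n+1}\to A_n$. This is more accurate than the paper's treatment. The paper reduces to lifting over the annulus $P\setminus f_0^{-1}(R)$ by asserting that all critical values of $f_0|_P$ lie in a single component $R$ of $V^n_{f_0}\cap Q$. That assertion fails, for instance, when a cubic branch $f_0|_{U_1}$ has its two critical values at the fixed points of univalent branches on two different components $U_2,U_3$ of $U_{f_0}$.
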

\begin{proof} Let $E=\mathcal{H}_*(f_0,\mathcal{V}_W)$. The connectivity of $E$ is essential in the following proof which uses the well-known pull-back argument. 

For each $g\in E$, let $V^0_g=V_g$ and for each $n\ge 1$, denote $V^n_g=(g|U_g)^{-1}(V_g^{n-1})$. We claim that for each $n\ge 1$, for each $g\in E$, there is a qc map $H_{g,n}:\C\to \C$ such that 
$g\circ H_{g,n}(z)=H_{g,n}(f_0(z))$ for all $z\in U_{f_0}\setminus V^n_{f_0}$ and such that $H_{g,n}=h_g$ on $\C\setminus U_{f_0}$. 

Indeed, for $n=1$, we just take $H_{g,1}=h_g$. Let $n\ge 1$ and assume that the maps $H_{g,n}$ have been constructed for all $g\in E$. We construct $H_{g, n+1}$ as follows. For each $z\not\in V^n_{f_0}$, let $H_{g, n+1}(z)=H_{g,n}(z)$. Now consider a component $P$ of $V_{f_0}^n$. Let $Q=f_0(P)$. Moreover, let $P_g=H_{g,n}(P)$ and $Q_g=H_{g,n}(Q)$. Note that $f_0:P\to Q$ and $g: P_g\to Q_g$ have the same degree since the boundaries maps are conjugacy by the homeomorphism $H_{g,n}$ and hence have the same degree.  
\begin{itemize}
\item If $f_0: P\to Q$ is conformal, we simply define $H_{g,{n+1}}|_P=(g|_{P_g})^{-1}\circ H_{g,n}\circ f_0$ on $P$.
\item Assume that $f_0:P\to Q$ has a critical point. Let $CV_g$ denote the set of critical values of $g:P_g\to Q_g$. In the Hausdorff topology, $CV_g$ moves continuously in $g\in E$. Since $d(j_Wg)$ is constant for $g\in E$, $CV_g\cap \partial V^n_g=\emptyset$. By condition on our choice of $f_0:U_0\to V_0$, $CV_{f_0}$ lies in a single component $R$ of $V_{f_0}^n\cap Q$. Thus $CV_g\subset R_g:=H_{g,n}(R)$. We define $H_{g,n+1}$ on the annulus $P\setminus  f^{-1}(R)$ by lifting the map $H_{g,n}: Q\setminus R\to Q_g\setminus R_g$ whose natural extensions to $\partial P$ coincide with $H_{g,n}|_{\partial P}$. Finally, extend the definition of $H_{g,n+1}$ to the single component $P\cap f^{-1}(R)$ as a qc map. 
\end{itemize}

Note that $H_{g,n}=H_{g,n+1}$ outside $V^n_{f_0}$. Outside $\overline{V^n_{f_0}}$, the maximal dilatation of $H_{g,n}$ is bounded from above by that of $h_g$. It follows that $H_{g,n}$ converges locally uniformly to a qc map $H_g$ from  $\C\setminus K_*(f_0)$ onto $\C\setminus K_*(g)$, for each $g\in E$.
\end{proof}

\begin{lemma}\label{lem:connectedJuliasets}
Let $f_j: U_j\to V_j$, $j=1,2$ be GPLs such that
\begin{enumerate}
\item[(1)] $f_j$ has no critical point in $U_j\setminus K(f_j)$;
\item[(2)] for any component $Q$ of $V_j$, $K(f_j)\cap Q$ is connected. 
\end{enumerate}
Let $H:\C\to\C$ be a qc map such that $H\circ f_1=f_2\circ H$ on $U_1$ and let $\varphi:\C\to \C$ be a qc map which provides a hybrid conjugacy from $f_1$ to $f_2$ such that 
\begin{enumerate}
\item[(3)] for each connected component $J$ of $K(f_1)$, $\varphi(J)$ and $H(J)$ lies in the same component of $V_2$. 
\end{enumerate}
Then there is a qc map $\psi:\C\to \C$ such that 
\begin{itemize}
\item $\psi\circ f_1=f_2\circ \psi$ on $U_1$;
\item $\psi=H$ outside $V_1$;
\item $\psi=\varphi$ on $K(f_1)$.
\end{itemize}  
\end{lemma}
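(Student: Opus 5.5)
We will build $\psi$ by the standard pull-back argument, starting from an interpolation between $H$ near $\partial V_1$ and $\varphi$ near $K(f_1)$.

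\textbf{Step 1: the fundamental annulus.} By conditions (1)--(2), for each component $Q$ of $V_1$ the set $K(f_1)\cap Q$ is a connected compact set, hence full. We choose a domain $Q'$ with
\[
K(f_1)\cap Q\subset Q'\Subset Q\cap U_1,
\]
bounded by a real-analytic Jordan curve. Pulling back by $f_1$, each component $P$ of $U_1$ maps properly onto some $Q$. Any critical points of $f_1|_P$ lie in $K(f_1)$ by (1), so their critical values lie in $K(f_1)\cap Q\subset Q'$. Hence $f_1^{-1}(Q')\cap P$ is a single Jordan domain containing $K(f_1)\cap P$.

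\textbf{Step 2: the initial map $\psi_0$.} We want a qc map $\psi_0:\C\to\C$ satisfying:
\begin{itemize}
\item $\psi_0=H$ outside $V_1$ and on $\partial U_1$;
\item $\psi_0=\varphi$ on a neighbourhood $N$ of $K(f_1)$;
\item $\psi_0$ conjugates $f_1$ to $f_2$ on $\partial U_1$.
\end{itemize}
Condition (3) enters here. It ensures that $\varphi$ and $H$ send the piece of $K(f_1)$ in $Q$ into the same component $Q_2$ of $V_2$. Then $\varphi(Q')$ is a full continuum-neighbourhood inside $Q_2$.

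On each component of $V_1$ the region $V_1\setminus(U_1\cup \overline{N})$ is a multiply connected domain. We define $\psi_0$ there as a qc interpolation between the boundary values $H|_{\partial V_1\cup\partial U_1}$ and $\varphi|_{\partial N}$. For this we choose $N$ with Jordan-curve boundaries and use that the two boundary maps induce the same correspondence of boundary components, which is guaranteed by (3). Inside components of $U_1$ we set $\psi_0=\varphi$ near $K(f_1)$ and interpolate on the annuli $P\setminus\overline{N}$.

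\textbf{Step 3: the pull-back sequence.} Set $\psi_{n+1}=f_2^{-1}\circ\psi_n\circ f_1$ on $U_1$ and $\psi_{n+1}=\psi_0$ outside $U_1$. The inverse branch is chosen so that $\psi_{n+1}$ agrees with $\psi_n$ on $\partial U_1$. Such a lift exists on $P\setminus K(f_1)$ by a homotopy argument, since there are no critical points off the filled Julia set by (1). On a neighbourhood of $K(f_1)$ we take $\psi_{n+1}=\varphi$. This is consistent because $\varphi$ itself conjugates there and $\psi_n=\varphi$ near $K$. The lift is unique once it is matched with $\varphi$ on the inner boundary.

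The dilatation of $\psi_n$ is bounded by that of $\psi_0$ on $\C\setminus K(f_1)$ and by that of $\varphi$ on $K(f_1)$. Moreover $\psi_{n+1}=\psi_n$ outside $f_1^{-n}(V_1)$. Hence $\psi_n$ converges to a qc map $\psi$ that equals $H$ outside $V_1$ and conjugates $f_1$ to $f_2$ on $U_1$.

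On $K(f_1)$ we have $\psi_n=\varphi$ by construction, so $\psi=\varphi$ there. An alternative way to see this: the homeomorphisms $\psi$ and $\varphi$ agree on the boundary of the filled set, by uniqueness of the equivariant extension over the basin. Then redefine $\psi:=\varphi$ on $K(f_1)$, and the result stays qc because $\partial K$ is removable for such gluing, being matched continuously.

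\textbf{Main obstacle.} The hardest step is the compatibility in Step 2 and the lift in Step 3. We must produce an interpolation $\psi_0$ on the annular regions whose lift via $f_1$ and $f_2$ matches $\varphi$ near $K(f_1)$. This requires the homotopy classes of $H$ and $\varphi$ on each annulus to agree, since otherwise the lift might twist. To handle this, we would first try to modify the interpolation by Dehn twists, which is possible because the twist can be absorbed in the fundamental annulus $V_1\setminus \overline{U_1}$. Only afterwards would we take the pull-back limit.
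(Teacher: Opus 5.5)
\textbf{There is a genuine gap in Steps 2--3.} You prescribe $\psi_0=H$ on $\partial U_1$ and keep $\psi_{n+1}=\psi_0$ off $U_1$, so any limit satisfies $\psi=H$ on $\partial U_1$. At the same time you set $\psi_{n+1}=\varphi$ near $K(f_1)$. On each annulus $P\setminus K(f_1)$ the lift of $\psi_n$ through the coverings $f_1,f_2$ is unique once it is fixed at one point, so these two normalizations overdetermine it. The lift pinned to $H$ on $\partial P$ equals $\sigma\circ\varphi$ near $K(f_1)\cap P$ for some deck transformation $\sigma$ of $f_2$. That $\varphi$ conjugates only shows $\varphi$ is \emph{a} lift, not the one you selected.

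Full Dehn twists in $V_1\setminus\overline{U_1}$ cannot remove $\sigma$, because the requirement $\psi=H$ on $\partial U_1$ is itself incompatible with the conclusion. Take $V_j=B(0,4)\cup B(10,4)$, $U_j=B(0,2)\cup B(10,2)$, and $f_1=f_2=f$ with $f(z)=z^2$ on $B(0,2)$ and $f(z)=(z-10)^2$ on $B(10,2)$, so $K=\overline{B(0,1)}\cup\overline{B(10,1)}$. Let $\varphi=\mathrm{id}$. Let $H=\mathrm{id}$ off $B(10,4)$, $H(z)=20-z$ on $B(10,2)$, and $H(10+re^{it})=10+re^{i(t+\pi(4-r)/2)}$ for $2\le r\le 4$. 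Hypotheses (1)--(3) hold, and $\psi=\mathrm{id}$ satisfies the lemma.

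Now suppose an admissible $\psi$ had $\psi=H$ on $\partial U_1$. Write $\psi(e^u)=e^{g(u)}$ on $B(0,4)\setminus\overline{B(0,1)}$, with $g(u)=u$ on $\mathrm{Re}\,u=\log 4$. Then $\psi=\mathrm{id}$ on $\partial B(0,2)$ gives $g(u)=u+2\pi i s$ there, with $s\in\mathbb{Z}$. The conjugacy gives $g(u)=\tfrac12 g(2u)+2\pi i s$, hence $g(u)-u\to 4\pi i s$ as $\mathrm{Re}\,u\to 0$. Next write $\psi(10+e^v)=10+e^{\lambda(v)}$ with $2\lambda(v)=g(2v)+2\pi i j$. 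The condition $\psi=H$ on $\partial B(10,2)$ forces $j$ odd. So $\lambda(v)-v\to(2s+j)\pi i$, i.e.\ $\psi(z)\to 20-z\neq\varphi(z)$ on $\partial B(10,1)$, a contradiction.

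The same example refutes your ``alternative'' argument. The equivariant extension over the basin is determined by the data on the fundamental annulus (here coming from $H$). It need not agree with $\varphi$ on $\partial K(f_1)$. Moreover, $\partial K(f_1)$ is not known to be qc-removable.

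The missing idea, which is how the paper argues, is to normalize each lift at the \emph{inner} boundary and leave the values on $\partial U_1$ free.
\begin{itemize}
\item Take $\psi_0$ qc with $\psi_0=H$ off $V_1$ and $\psi_0=\varphi$ on a neighbourhood $P_1^0$ of $K(f_1)$ meeting each component of $V_1$ in a Jordan disk. This is possible by (2), and (3) makes the annular interpolation possible.
\item On $U_1$, let $\psi_1$ be the lift of $\psi_0$ that equals $\varphi$ on $P_1^1=(f_1|_{U_1})^{-1}(P_1^0)$. It exists because, by (1), $f_j$ is unbranched off $P_j^1$ and $\varphi$ conjugates there.
\item The values of $\psi_1$ on $\partial U_1$ are then some lift $\sigma\circ H$ of $H|_{\partial V_1}$, with $\sigma$ possibly nontrivial; in the example, $\sigma(z)=20-z$ on $\partial B(10,2)$.
\item Interpolate on $V_1\setminus U_1$, choosing the number of Dehn twists so that $\psi_1$ is isotopic to $\psi_0$ rel $(\C\setminus V_1)\cup K(f_1)$. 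This is exactly where your Dehn-twist remark belongs.
\item Lifting this isotopy repeatedly gives $\psi_n$ with $\psi_n=H$ off $V_1$, $\psi_n=\varphi$ on $K(f_1)$, and $f_2\circ\psi_{n+1}=\psi_n\circ f_1$ on $U_1$. The $\psi_n$ have uniformly bounded dilatation and eventually stabilize at every point of $\C\setminus K(f_1)$.
\end{itemize}
The limit is then qc and equals $\varphi$ on $K(f_1)$, with no removability of $\partial K(f_1)$ needed.
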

\begin{proof} Let $P_1\Subset U_1$ be a neighborhood of $K(f_1)$ such that 
\begin{itemize}
\item $\varphi$ is well-defined on $P_1$ with $P_2=\varphi(P_1)\Subset U_2$;
\item $\varphi\circ f_1=f_2\circ \varphi$ on $P_1$;
\item for each component $Q$ of $V_1$, $P_1\cap Q$ is a Jordan disk. 
\end{itemize}
For $j=1,2$, let $P_j^0=P_j$ and for each $n\ge 0$, let $P_j^{n+1}=(f_j|_{U_j})^{-1}(P_j^n)$. Take $\psi_0$ to be an arbitrary qc map such that 
\begin{itemize}
\item $\psi_0=H$ on $\C\setminus V_1$;
\item $\psi_0=\varphi$ on $P_1^0$. 
\end{itemize}
Note that our assumptions imply that for each component $Q$ of $U_j$, $f_j|_{Q\setminus P_j^1}$ is an un-branched covering onto $Q'\setminus P_j^0$ for some component $Q'$ of $V_j$. Thus there is a qc map $\psi_1:\C\to \C$ such that 
\begin{itemize}
\item $\psi_1=H$ on $\C\setminus V_1$;
\item $f_2\circ \psi_1=\psi_0\circ f_1$ on $U_1$;
\item $\psi_1=\varphi$ on $P_1^1$. 
\end{itemize}
Since $\psi_1=\psi_2$ outside $V$ and also in a neighborhood of $K(f_1)$ (coinciding with $\varphi$),  $\psi_0$ is isotopic to $\psi_1$ rel $(\C\setminus V_1)\cup K(f_1)$.
Since $K(f_j)$ contains all critical values of $f_j|_{U_j}$, by successively lifting the isopoty, we obtain a sequence $\psi_n:\C\to \C$ of qc maps, $n=0,1,\ldots$, such that 
\begin{itemize}
\item[(i)] $\psi_n=H$ on $\C\setminus V_1$;
\item[(ii)] $\psi_n=\varphi$ on $K(f_1)$;
\item[(iii)] $f_2\circ\psi_{n+1}=\psi_n\circ f_1$ on $U_1$. 
\end{itemize}
Since $\psi_n$ has uniformly bounded maximal dilatation and $\psi_n(z)$ eventually stablizes for each $z\in\C$, $\psi_n$ converges locally uniformly to a qc map $\psi$ which satisfies all the requirements of the lemma.  
\end{proof}

\begin{proof}[Proof of Proposition~\ref{prop:connected set}] Since $\mathcal{V}_W$ is also an equipped neighborhood of $f$, we may assume without loss of generality that $f=f_0$. Write $E=\mathcal{H}_*(f_0,\mathcal{V}_W)$.  
For each $g\in E$, let $H_g:\C\setminus K_*(f_0)\to \C\setminus K_*(g)$ be given by Lemma~\ref{lem:hgextension}, which is a qc map and satisfies $H_g\circ f_0=g\circ H_g$ on $U_{f_0}\setminus K_*(f_0)$.

We shall use~\cite[Theorem 1.3]{PT} which is a rigidity result based on~\cite{KSS} and~\cite{KL}.   
A component $J_g$ of $K_*(g)$ is called {\em critical} (resp. {\em trivial}) if it contains a critical point of $g$ (resp. it is a singleton). 

If $K_*(f_0)$ has no (pre)periodic critical component, then so does $g\in E$. In this case, Theorem 1.3 in~\cite{PT} immediately implies that $H_g$ extends to a hybrid equivalence between $f$ and $g$. Hence $E\subset \mathcal{H}_\#(f_0, \mathcal{V}_W)$.

We shall now deal with the case that $K_*(f_0)$ has a periodic critical component.
Let us call a connected component $Q$ of $(g|_{U_g})^{-n}(V)$ a $g$-puzzle piece of depth $n$. For $g\in E$, the map $H_g$ determines a bijection $\alpha_g$ between $f_0$-puzzles pieces and $g$-puzzle pieces for each fixed depth $n$: each $f$-puzzle piece $Q$ of depth $n$ corresponds to a unique $g$-puzzle piece $Q_g$ of the same depth with the property $\partial Q_g=H_g(\partial Q)$. Moreover, $f_0|_{\partial Q}$ and $g|_{\partial Q_g}$ have the same degree.  For a disjoint union $T=\bigcup_i P_i$ of puzzle pieces $P_i$ of $f_0$, we write $\alpha_g(T)=\bigcup_i \alpha_g(P_i)$.

The map $H_g$ also determines a bijection, still denoted by $\alpha_g$, between components of $K_*(f_0)$ and components of $K_*(g)$: For each component $J$ of $K_*(f_0)$, letting $Q_n$ denote the unique $f_0$-puzzle piece of depth $n$ which contains $J$, then $\alpha_g(J):=J_g:=\bigcap_{n=0}^\infty Q_n^g\in \Omega_g$ is a component of $K_*(g)$. Note that 
\begin{itemize}
\item $\alpha_g(f_0(J))=g(\alpha_g(J))$;
\item if $J$ contains a critical point of $f_0$, then $J_g$ contains a critical point of $g$. 
\end{itemize}
By \cite{QY,KvS}, a critical component $J_g$ of $K_*(g)$ is non-trivial if and only if $J_g$ is preperiodic. Thus $\alpha_g$ induces a bijection between $\Omega_{f_0,0}$ and $\Omega_{g,0}$, where $\Omega_{g,0}$ denotes the collection of non-trivial critical components of $K_*(g)$.

Let $\Omega_g$ denote the collection of non-trivial components of $K_*(g)$ which intersect the critical orbits of $g$ and let $L(g)$ denote the union of these components.  Since each element of $\Omega_g$ 
is a $g$-iterated image of some element of $\Omega_{g,0}$, $\alpha_g$ induces a bijection between $\Omega_{f_0}$ and $\Omega_g$. Let $k$ be a large positive integer such that 
\begin{itemize}
\item each puzzle piece of $f_0$ of depth $k$ contains at most one element of $\Omega_{f_0}$, and does not contain any critical point of $f_0$ outside $L(f_0)$.
\end{itemize}
Then for $g\in E$, each puzzle piece of $g$ of depth $k$ contains at most one element of $\Omega_g$ and does not contain any critical point of $g$ outside $L(g)$. Let $V'_g$ (resp. $U'_g$) denote the union of $g$-puzzle pieces of depth $k$ (resp. $k+1$) that intersect $L(g)$. Then $U_g'=\alpha_g(U_{f_0}')$ and $V_g'=\alpha_g(V_{f_0}')=V_{f_0}'$. When $\Omega_g\not=\emptyset$, the map $g: U_g'\to V_g'$ is a GPL whose filled Julia set is equal to $L(g)$. 
  
For each $g\in E$, let $\varphi_g: \C\to \C$ be a qc map which provides a hybrid conjugacy between $f_0: U_{f_0}\to V_{f_0}$ and $g: U_g\to V_g$. The dependence of $\varphi_g$ in $g$ is unclear.

Let $E_0$ be any subset of $E$ with infinitely many elements. We shall use the pigeon-hole principle to show that there exists $g_1, g_2\in E_0$ with $g_1\not=g_2$ such that $H_{g_2}\circ H_{g_1}^{-1}$ and $\varphi_{g_2}\circ \varphi_{g_1}^{-1}|_{K_*(g_1)}$ can be glued together to form a hybrid equivalence between $g_1$ and $g_2$, so that $g_2\in \mathcal{H}_\#(g_1, \mathcal{V}_W)$.

Indeed, the map $\varphi_g$ send an element of $\Omega_{f_0,0}$ (resp. $\Omega_{f_0}$) onto an element of $\Omega_{g,0}$ (resp. $\Omega_g$). Let $\beta_g: \Omega_{f_0}\to \Omega_g$ denote the bijection induced by $\varphi_g$. 
Since $\beta_g\circ \alpha_g^{-1}$ is a permutation of the finite set $\Omega_{f_0}$, there exists an infinite subset $E_1$ of $E_0$ such that for any $g_1, g_2\in E_1$, $\beta_{g_2}\circ \alpha_{g_2}^{-1}=\beta_{g_1}\circ \alpha_{g_1}^{-1}$.  
It follows that for any component $U'$ of $U_{f_0}'$, $\varphi_{g_2}\circ \varphi_{g_1}^{-1}$ maps $L(g_1)\cap \alpha_{g_1}(U')$ onto $L(g_2)\cap \alpha_{g_2}(U')$. 
Fix $g_1\in E_1$. Then, by Lemma~\ref{lem:connectedJuliasets}, for any $g\in E_1$,  
there is qc map $\psi_g: \C\to \C$ such that 
\begin{itemize}
\item $\psi_g|_{\C\setminus V'_{g_1}}=H_{g}\circ H_{g_1}^{-1}$;
\item $ \psi_g=\varphi_g\circ \varphi_{g_1}^{-1}$ on $L(g_1)$;
\item $g\circ \psi_g=\psi_g\circ g_1$ on $U_{g_1}'$. 
\end{itemize}

Let us consider the maps $h_{g}^{-1}\circ \psi_g\circ h_{g_1}|_{\partial U_{f_0}'}$, $g\in E_1$. They are lifts of the identity map on $\partial V_{f_0}$ by the map $f_0$, hence they lie in a finite set. Therefore, there is an infinite subset $E_2$ of $E_1$ such that $h_g^{-1}\circ \psi_g\circ h_{g_1}|_{\partial U'_{f_0}}$ are the same for all $g\in E_2$. 

Fix $g_2\in E_2$. Then for any $g\in E_2$, $h_g^{-1}\circ \psi_g=h_{g_2}^{-1}\circ \psi_{g_2}$ on $\partial U_{f_0}'$. That is, $\psi_g\circ \psi_{g_2}^{-1}=h_g\circ h_{g_2}^{-1}$ on $\partial U_{g_2}'$.  

It follows that $H_g\circ H_{g_2}^{-1}=\psi_{g}\circ \psi_{g_2}^{-1}$ on $(g_2|_{U'_{g_2}})^{-n}(\partial U'_{g_2})$ for any $n\ge 0$ and any $g\in E_2$.
In particular, for any puzzle piece $Q$ of $f_0$ which intersect $L(g_2)$, $\psi_g\circ \psi_{g_2}^{-1}:Q_{g_2}\to Q_g$ is a qc extension of $H_g\circ H_{g_2}^{-1}|_{\partial Q_{g_2}}$. Then maximal dilatation of the extension is bounded from above by a constant independent of $Q$. By~\cite[Theorem 1.3]{PT}, it follows that $H_g\circ H_{g_2}^{-1}$ extends to a hybrid conjugacy from $g$ to $g_2$. 

Thus $E$ intersects only finitely many equivalence classes $\mathcal{H}_\#(g, W)$, $g\in \mathcal{V}_W$. By Lemma~\ref{lem:closedH}, the intersection of $E$ with each of these equivalence classes is closed in $E$. Since $E\ni f_0$ and $E$ is connected, $E$ is contained in the equivalence class $\mathcal{H}_{\#} (f_0, W)$.  
\end{proof}

\section{Lyubich's horizontal-vertical decomposition}\label{sec:lyubichdecomp}
In this section, we study Lyubich's horizontal-vertical decomposition of $\mathcal{B}_W$ in the GPL setting. The argument here largely follows Lyubich's original argument, with modifications needed for our setting. These results will be used in \S~\ref{sec:hybridclass} (Lemma~\ref{lem:mating0}) to study the tangent map of the `mating' maps.

Throughout this section, $f_0\in \mathcal{Q}_{W_0}$ for some open subset $W_0$ of $\C$ and let $W$ be an open subset of $\C$ with 
$K_*(f_0)\subset W\subset W_0.$
Let $E^h(f_0, W)$ denote the collection of $v\in \mathcal{B}_W$ for which there is a continuous function $\beta: \mathbb{C} \to \mathbb{C}$ with the following properties: 
\begin{itemize}
\item For $z$ in a neighborhood of $K_*(f_0)$ in $\C$, we have
\begin{equation}\label{eqn:horizontal}
v(z)=\beta(f_0(z))-f'_0(z)\beta(z);
\end{equation}
\item $\beta(z)/dz$ is a qc vector field, i.e. $\beta$ has locally integrable distributional derivatives $\partial\beta$ and $\bar{\partial}\beta$, and $\bar{\partial} \beta$ is essentially bounded;
\item $\overline{\partial} \beta(z) =0$ holds for a.e. $z\in K_*(f_0)$.  
\end{itemize}

Let $E^{vert}_*(f_0,W)$ denote the space consisting of $v\in \mathcal{B}_W$ for which there is a holomorphic function $\gamma:\mathbb{C} \setminus K_*(f_0)\to \mathbb{C}$ such that 
\begin{equation}\label{eqn:vertical}
v(z)=\gamma(f_0(z))- f'_0(z)\gamma(z)
\end{equation}
holds for all $z\in W\setminus K_*(f_0)$ and such that
\begin{equation}\label{eqn:gamma0} 
\gamma(z)\to 0 
\mbox{ as }z\to\infty. 
\end{equation}

For $v\in E^h(f_0,W)$ (resp. $v\in E^{vert}_*(f_0, W)$), the vector field $v(z)/dz$ is called a {\em horizontal} (resp. {\em (normalized) vertical}) vector field for $j_{W_0}f_0$ (or $f_0$ if no confusion arises).

\begin{theorem}\label{thm:codimEh0} For each $f_0\in \mathcal{Q}_{W_0}$, there is a nonnegative integer $D=D(f_0,W_0)$ such that for any open subset $W$ of $\C$ with $K_*(f_0)\subset W\subset W_0$, $E^h(f_0,W)$ is a closed subspace of $\mathcal{B}_{W}$ of codimension $D$. Moreover, if in addition $W\Subset W_0$, then $\text{dim}(E^{vert}_*(f_0,W))=D$ and 
\begin{equation}\label{eqn:decomp}
\mathcal{B}_{W}=E^h(f_0, W)\oplus E^{vert}_*(f_0,W)
\end{equation}
holds as Banach spaces.  
\end{theorem}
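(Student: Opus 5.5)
Following Lyubich's argument for quadratic-like maps, the plan is to build the vertical space from a finite-dimensional space of cohomological obstructions. We then show that the horizontal space is the kernel of finitely many continuous linear functionals.

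\emph{Step 1 (the functionals).} Fix a representative $f_0:U\to V$ as in Lemma~\ref{lem:smallglp} with $\overline{U}\subset W$. Given $v\in\mathcal{B}_W$, solve the cohomological equation $v=\beta\circ f_0-f_0'\beta$ away from $K_*(f_0)$ by the standard pull-back construction. Prescribe $\beta$ on the fundamental annulus $\overline{V}\setminus U$ as a smooth vector field matching the equation on $\partial U$. Then extend to $U\setminus K_*(f_0)$ by
\[
\beta(z)=\frac{\beta(f_0(z))-v(z)}{f_0'(z)},
\]
which is legitimate because $f_0$ has no critical points in $\overline{U}\setminus K_*(f_0)$. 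Set $\beta$ to be $0$ off $V$ after a cut-off. The Beltrami differential $\bar\partial\beta$ on $\C\setminus K_*(f_0)$ is $f_0$-invariant and essentially bounded. It is supported on the grand orbit of the fundamental annulus, so it depends linearly and continuously on $v$, modulo the choice on the annulus. Extend it by $0$ on $K_*(f_0)$ to get an element $\nu_v\in L^\infty(\C)$. Then $v\in E^h$ if and only if $\nu_v$ is equivalent, in the quotient by $\bar\partial$ of qc vector fields conformal on $K_*(f_0)$ and $f_0$-equivariant, to $0$. Concretely, this means $\nu_v$ pairs to zero with every integrable quadratic differential $q$ on $\C\setminus K_*(f_0)$ satisfying
\[
(f_0)_*q=q \quad\text{on } V\setminus K_*(f_0),
\]
with $q$ meromorphic and with poles allowed only at critical values, after the usual adjustment. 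Pulling back the pairing reduces it to a pairing over the fundamental annulus. Hence $v\mapsto\langle\nu_v,q\rangle$ is a bounded linear functional on $\mathcal{B}_W$ for each such $q$.

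\emph{Step 2 (finite dimensionality and independence of $W$).} Next we show the space $\mathcal{Q}$ of such invariant quadratic differentials is finite-dimensional. The transfer operator $(f_0)_*$ is a weak contraction on $L^1$, and a fixed point must be holomorphic except for simple poles at the critical values of $f_0$ lying in $K_*(f_0)$ and their images. By Thurston's contraction argument, this bounds the dimension by the number of critical points counted with multiplicity. Define $D$ to be the rank of the induced map $\mathcal{B}_W\to\mathcal{Q}^*$. The map factors through germs near $K_*(f_0)$, since changing $v$ away from $K_*(f_0)$ is absorbed by the choice of $\beta$. So the rank is the same for all $W$ with $K_*(f_0)\subset W\subset W_0$: restriction from $\mathcal{B}_{W_0}$ to $\mathcal{B}_W$ has dense image on germs, and a finite-rank map with dense domain image has the same rank. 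Then $E^h(f_0,W)$ is the intersection of the kernels of $D$ continuous functionals, hence closed of codimension $D$.

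\emph{Step 3 (vertical space).} For $W\Subset W_0$, define for each $a\in\C\setminus \overline{W}$ the Cauchy kernel vector field $\gamma_a(z)=1/(z-a)$, and set
\[
v_a=\gamma_a\circ f_0-f_0'\gamma_a ,
\]
which lies in $E^{vert}_*$ once restricted suitably. More generally, any $\gamma$ holomorphic off $K_*(f_0)$ that vanishes at $\infty$ gives a vertical vector. A vertical $v$ that is also horizontal produces $\beta-\gamma$, which is $f_0$-invariant and conformal off $K_*(f_0)$ and on $K_*(f_0)$. This forces $\beta-\gamma$ to be a holomorphic vector field on $\C$ vanishing at $\infty$, hence $0$, so $\gamma$ is the zero vector. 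This shows $E^h\cap E^{vert}_*=0$. For the sum, Cauchy-integral representation on $\partial W$ shows the vectors $v_a$ span a dense subspace of the quotient $\mathcal{B}_W/E^h$. Since this quotient is finite-dimensional, we get $\dim E^{vert}_*\ge D$, and hence the decomposition~(\ref{eqn:decomp}). For the reverse inequality, the map $\gamma\mapsto v$ is injective modulo $E^h$ by the intersection argument. The main obstacle is Step 1–2: making the functional description of $E^h$ rigorous. In particular we must show that vanishing pairings really produce a global continuous qc $\beta$ with $\bar\partial\beta=0$ on $K_*(f_0)$, via Bers' density and a Weyl-type lemma on $K_*(f_0)$.
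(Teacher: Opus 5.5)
The main gap is in Step 1, and it is more than a matter of rigor: the space $\mathcal{Q}$ you describe is $\{0\}$. Suppose $q$ is integrable on $V\setminus K_*(f_0)$ and $(f_0)_*q=q$ there. Then $\int_A|q|=\int_A|(f_0)_*q|\le\int_{f_0^{-1}(A)\cap U}|q|$ gives $\int_{A_n}|q|\le\int_{A_{n+1}}|q|$ for the fundamental annuli $A_n=V_n\setminus\overline{V_{n+1}}$. Summability then forces $q=0$ on $V\setminus K_*(f_0)$, hence on the connected set $\C\setminus K_*(f_0)$. Allowing poles at critical values changes nothing, because for your representative those values lie in $K_*(f_0)$. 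Your criterion would therefore make every $v$ horizontal, which is false as soon as $K_*(f_0)$ contains a critical point.

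What actually characterizes $E^h$ is a Cauchy-transform condition. Let $\beta$ be the Cauchy transform of $\nu_v$, with $\nu_v=0$ on $K_*(f_0)$. Then $v\in E^h$ exactly when $\beta$ agrees with your pulled-back field, i.e.\ vanishes off $V$. These functionals $v\mapsto\int\nu_v(z)(z-a)^{-1}\,dA(z)$, $a\notin\overline V$, do not come from invariant quadratic differentials, and the finiteness of their rank is the content of the theorem. Thurston's contraction argument only shows that invariant integrable differentials vanish (the computation above); it gives no bound in terms of critical points. So Steps 1--2 establish neither $D$, nor finiteness, nor closedness.

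Step 3 also fails. For $a\notin K_*(f_0)$, the vector $v_a$ (when it lies in $\mathcal{B}_W$ at all) is horizontal: take $\beta=\chi\gamma_a$ with $\chi$ smooth, $\chi\equiv1$ near $K_*(f_0)$ and $\chi\equiv0$ near $a$. So the $v_a$ are zero in $\mathcal{B}_W/E^h$ and cannot give $\dim E^{vert}_*\ge D$. They are not vertical either, since $\gamma_a$ has a pole off $K_*(f_0)$. A $\gamma$ holomorphic off $K_*(f_0)$ gives a vertical vector only when $\gamma\circ f_0-f_0'\gamma$ extends holomorphically across $K_*(f_0)$, and that is the restrictive condition.

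In your intersection argument, $\beta-\gamma$ is neither defined on $K_*(f_0)$ nor holomorphic. The mechanism that works is this: $\delta=\beta-\gamma$ satisfies $\delta=\delta\circ f_0^n/(f_0^n)'$ near $K_*(f_0)$, so Lemma~\ref{lem:expansion} forces $\delta\to0$ at $K_*(f_0)$. Hence $\gamma$ extends across $K_*(f_0)$ with $\bar\partial\gamma=\bar\partial\beta=0$ there, so it is entire and therefore zero.

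The paper avoids duality altogether. It solves $\bar\partial\beta=\mu$ on $\C$, with $\mu=\bar\partial\alpha$ off $K_*(f_0)$ for the pulled-back $\alpha$ and $\mu=0$ on $K_*(f_0)$. It sets $\gamma=\alpha-\beta$, which is holomorphic off $K_*(f_0)$, and reads off $v=(\beta\circ f_0-f_0'\beta)+(\gamma\circ f_0-f_0'\gamma)$. Finite dimension and continuity of $P^{vert}_{f_0}$ (hence closedness) come from Lyubich's technical lemma (Lemma~\ref{lem:Lyutech}), which gives compactness of the unit ball of $E^{vert}_*$. Independence of $W$ comes from unique continuation of vertical vectors to $W_0$, and the case of general $W$ from Mergelyan's theorem. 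The hypothesis $W\Subset W_0$ enters in these steps and is necessary, by \S\ref{subsec:counterex}; your outline never uses it.
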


We shall show in \S\ref{sec:codim} that $D(f_0, W_0)$ coincides with $d(j_{W_0}f_0)$ defined in (\ref{eqn:criticality}).
Let us note that the assumption $W\Subset W_0$ is necessary for the decomposition (\ref{eqn:decomp}) to hold; see \S\ref{subsec:counterex}.

Lyubich's technical lemma gives uniform estimates of vectors in $E^{vert}_*(g,W)$ for $g$ in a small neighborhood of $f_0$ in $\mathcal{Q}_W$. We use this result to study the continuity of $D(g,W)$ and the vertical spaces $E^{vert}_*(g,W)$.

\begin{theorem}\label{thm:continuity} Suppose that $f_n\to f_0$ in $\mathcal{Q}_{W_0}$, $K_*(f_0)\subset W\Subset W_0$, and $d(j_{W_0}f_n)=d(j_{W_0}f_0)$ for all $n$. Then $$\text{dim}(E^{vert}_*(f_n,W))=\text{dim}(E^{vert}_*(f_0,W))$$ for all sufficiently large $n$. Moreover, 
$E^{vert}_*(f_n,W)$ converges to $E^{vert}_*(f_0,W)$.
\end{theorem}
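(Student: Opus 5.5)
The plan is to follow Lyubich's argument: first obtain uniform control of normalized vertical vector fields for maps near $f_0$, then pass to limits. Write $D_n=\dim E^{vert}_*(f_n,W)$ and $D=\dim E^{vert}_*(f_0,W)$. By Theorem~\ref{thm:codimEh0}, $E^{vert}_*(g,W)$ is a complement of $E^h(g,W)$ of dimension $D(g,W_0)$. The proof splits into upper semicontinuity ($\limsup D_n\le D$, limits of vertical spaces lie in the vertical space of $f_0$) and lower semicontinuity ($\liminf D_n\ge D$).

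\emph{Step 1: uniform estimate (Lyubich's technical lemma).} Fix an equipped neighborhood of $f_0$ with holomorphic motion $h_f$ and GPL representatives $f_n:U_{f_n}\to V_{f_n}$. Since $d(j_{W_0}f_n)=d(j_{W_0}f_0)$, Lemma~\ref{lem:hgextension} should apply along the sequence. Given $v\in E^{vert}_*(f_n,W)$ with generator $\gamma$, the aim is to show
\[
\sup_{\C\setminus V_{f_0}}|\gamma|\le C\|v\|_W,
\]
with $C$ independent of $n$. The plan is to argue by contradiction. Normalize so that $\sup|\gamma_n|=1$ on a fixed annulus while $\|v_n\|_W\to 0$. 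The functional equation (\ref{eqn:vertical}) then propagates bounds on $\gamma_n$ inward through the pullbacks $(f_n|_{U})^{-k}$; this uses that there are no critical points in $\overline{U}\setminus K_*$, and that the critical values stay inside $K_*$ by constancy of criticality. Taking a limit gives a nonzero holomorphic $\gamma_\infty$ on $\C\setminus K_*(f_0)$ with $\gamma_\infty\to0$ at $\infty$ and $\gamma_\infty\circ f_0=f_0'\gamma_\infty$. Such a $\gamma_\infty$ gives an invariant vector field, which I expect to be ruled out by the argument used in Theorem~\ref{thm:codimEh0}: the zero vector is vertical only trivially, since $E^h\cap E^{vert}_*=0$. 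The convergence $K_*(f_n)\to K_*(f_0)$ is only upper semicontinuous (Lemma~\ref{lem:K*cont}), so $\gamma_n$ can only be controlled away from a neighborhood of $K_*(f_0)$. I therefore plan to work on the complement of a fixed puzzle neighborhood and use the maximum principle there.

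\emph{Step 2: upper semicontinuity.} Pick bounded bases of $E^{vert}_*(f_n,W)$, normalized in $\mathcal B_W$. By Step 1, the corresponding $\gamma$'s form a normal family on compacts of $\C\setminus K_*(f_0)$. Limits of $v_n$, taken uniformly on $\overline W$ (using $W\Subset W_0$ and Cauchy estimates on $\mathcal B_{W_0}$-bounded families after enlarging), satisfy (\ref{eqn:vertical}) for $f_0$. Hence any limit of $E^{vert}_*(f_n,W)$ is contained in $E^{vert}_*(f_0,W)$. The uniform bound also prevents a basis from degenerating into dependent limits, so $\limsup D_n\le D$.

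\emph{Step 3: lower semicontinuity.} Here I would perturb vertical fields of $f_0$. Given $\gamma_0$ for $f_0$, transport it by the extended motion $H_{f_n}$ of Lemma~\ref{lem:hgextension} outside $V$, and solve (\ref{eqn:vertical}) for $f_n$ on $\C\setminus V$ with a holomorphic correction. Equivalently, use the decomposition (\ref{eqn:decomp}) for $f_0$ and the fact that the projection onto $E^h(f_n,W)$ is close to that of $f_0$. The projection of $E^{vert}_*(f_0,W)$ onto $E^{vert}_*(f_n,W)$ along $E^h(f_n,W)$ should be injective for large $n$; this would give $D_n\ge D$ and convergence of the spaces.

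I expect Step 3 to be the main obstacle, because horizontal spaces need not vary continuously in general. If the direct approach fails, I would fall back on the constancy of codimension along curves of fixed criticality, as the introduction suggests (via Astorg's result). I would use that instead for the dimension equality and then deduce convergence of the spaces from Step 2.
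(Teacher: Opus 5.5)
Your Steps 1 and 2 are essentially sound and amount to Lemma~\ref{lem:Lyutech}, Lemma~\ref{lem:vertcomp} and Proposition~\ref{prop:lowerbd}. The contradiction route to the uniform bound works, with two corrections:
\begin{itemize}
\item What it needs is uniqueness of the generator of $v=0$, not $E^h\cap E^{vert}_*=0$. A holomorphic $\gamma_\infty$ on $\C\setminus K_*(f_0)$, vanishing at $\infty$, with $\gamma_\infty\circ f_0=f_0'\gamma_\infty$, tends to $0$ at $K_*(f_0)$ by Lemma~\ref{lem:expansion}, and hence vanishes as in the proof of Lemma~\ref{lem:Lyudecomp}.
\item It needs no criticality hypothesis.
\end{itemize}

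The genuine gap is Step 3, the inequality $\liminf D_n\ge D$. This is exactly where $d(j_{W_0}f_n)=d(j_{W_0}f_0)$ must be used, and you give no argument for it. None of your three suggestions supplies one:
\begin{itemize}
\item Closeness of $P^h_{f_n}$ to $P^h_{f_0}$ is not available; you note yourself that horizontal spaces need not vary continuously.
\item Lemma~\ref{lem:hgextension} applies only to maps in $\mathcal{H}_*(f_0,\mathcal{V}_W)$, so there is no extended conjugacy $H_{f_n}$ for a general $f_n$. This also disposes of your appeal to it in Step 1.
\item The fallback is circular. Constancy of $\dim E^{vert}_*$ along qc deformations (Corollary~\ref{cor:Dfgerm}), and hence Theorem~\ref{thm:dimension}, are derived in the paper \emph{from} this theorem. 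Besides, $f_n$ need not be qc conjugate to $f_0$.
\end{itemize}

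The missing idea is that you only need the pointwise statement $P^{vert}_{f_n}(v)\to v$ for each $v\in E^{vert}_*(f_0,W)$. That makes $P^{vert}_{f_n}$ injective on the finite-dimensional space $E^{vert}_*(f_0,W)$ for large $n$. It can be proved by rerunning the construction of Lemma~\ref{lem:Lyudecomp} for $f_n$, with the $f_0$-generator $\gamma$ of $v$ as initial data:
\begin{enumerate}
\item Put $\alpha_n=\gamma$ off $V$. Modify it in thin annuli around $\partial U_n$, where $U_n=h_{f_n}(U)$, so that $\alpha_n=(\gamma\circ f_n-v)/f_n'$ on $\partial U_n$. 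Since $\gamma\circ f_0-f_0'\gamma=v$ and $f_n\to f_0$ in $C^1$ there, $\|\bar\partial\alpha_n\|_\infty\le C\varepsilon_n\to0$.
\item Extend $\alpha_n$ to $\C\setminus K_*(f_n)$ via $v=\alpha_n\circ f_n-f_n'\alpha_n$. This pullback preserves $\|\bar\partial\alpha_n\|_\infty$. It is possible only because $f_n$ has no critical point in $\overline{U}_n\setminus K_*(f_n)$, which is what constant criticality guarantees. An escaping critical point obstructs the pullback, and by Theorem~\ref{thm:poly} the dimension can then really drop.
\item Solve $\bar\partial\beta_n=\bar\partial\alpha_n$ off $K_*(f_n)$ and $\bar\partial\beta_n=0$ on $K_*(f_n)$. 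This yields a horizontal vector $\tilde u_n=\beta_n\circ f_n-f_n'\beta_n$ with $\|\tilde u_n\|_{U_n}\to0$, and $P^{vert}_{f_n}v=v-\tilde u_n$ on $U_n$.
\item Lemmas~\ref{lem:Lyutech} and~\ref{lem:vertcomp} then give $P^{vert}_{f_n}v\to v$ in $\mathcal{B}_W$.
\end{enumerate}
Without this step, your proposal proves only $\limsup D_n\le D$ and that limits of $E^{vert}_*(f_n,W)$ lie in $E^{vert}_*(f_0,W)$.
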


In general, we do not expect the horizontal spaces $E^h(f,W)$ to depend on $f$ continuously, even within the criticality stratum $\{f\in \mathcal{Q}_{W_0}: d(j_{W_0}f)=d(j_{W_0}f_0)\}$. Nevertheless, our result shows that for maps $g$ in the critical stratum, locally, the horizontal spaces $E^h(f,W)$ have definite `angle' with the vertical spaces $E^{vert}_*(f,W)$.  

We shall also prove the following results which will be used later.

\begin{corollary}\label{cor:Dfgerm} For any GPLs $f:U\to V$ and $\widetilde{f}:\widetilde{U}\to \widetilde{V}$ which are qc equivalent, we have
$\text{dim}(E_*^{vert}(f,U))=\text{dim}(E^{vert}_*(\widetilde{f},\widetilde{U}))$.
\end{corollary}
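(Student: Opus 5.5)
The plan is to transport vertical vector fields along a qc conjugacy, which gives a linear injection between the two vertical spaces in each direction. By Theorem~\ref{thm:codimEh0}, both $E^{vert}_*(f,U)$ and $E^{vert}_*(\widetilde f,\widetilde U)$ have finite dimension, namely $D(f,U)$ and $D(\widetilde f,\widetilde U)$ (after shrinking to $W\Subset U$ if needed). The first step is therefore to give an intrinsic description of $D$ that is visibly invariant under qc conjugacy. Since $E^h$ has codimension $D$ and complements $E^{vert}_*$, we have
\[
D=\dim\bigl(\mathcal{B}_W/E^h(f,W)\bigr).
\]
I will identify this quotient with a space of infinitesimal deformations: holomorphic germs $v$ near $K(f)$ modulo those of the form $\beta\circ f-f'\beta$ with $\beta$ a qc vector field that is holomorphic a.e.\ on $K(f)$. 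Equivalently, $\dim E^{vert}_*$ counts the holomorphic functions $\gamma$ on $\C\setminus K(f)$ with $\gamma\to0$ at $\infty$ such that $\gamma\circ f-f'\gamma$ extends holomorphically across $K(f)$, taken modulo those $\gamma$ that extend as qc vector fields which are conformal on $K$.

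The second step is the transport. Let $\varphi$ be a qc conjugacy from $f$ to $\widetilde f$, defined near $K(f)$. I will not pull back vector fields directly, since $\varphi$ is not holomorphic. Instead I will use the deformation-space interpretation. A vector $v$ generates a one-parameter family $f+tv$. Lyubich's argument identifies $E^h$ as the tangent space to the family of maps qc conjugate to $f$ by conjugacies that are conformal on $K$. I will therefore consider the Beltrami differentials $\mu$ on $\C$ that are $f$-invariant on $U$ and supported off $K(f)$ up to boundary data, and interpret $\mathcal{B}/E^h$ as a space of deformations of the \emph{external} structure. Pulling back by $\varphi$ gives a bijection between $f$-invariant Beltrami differentials and $\widetilde f$-invariant ones, via $\mu\mapsto(\varphi^{-1})^*\mu$, i.e.\ composition of the structures in the sense of the measurable Riemann mapping theorem. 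This bijection is holomorphic in a finite-dimensional complex parameter, and it respects the subclass of structures vanishing on $K$. Differentiating at $0$ yields a complex-linear isomorphism of the quotients.

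Concretely, I will realise each class of $\mathcal{B}_W/E^h(f,W)$ by a vertical $\gamma$. I will then construct the one-parameter holomorphic family $f_t$ obtained by straightening the invariant Beltrami field $t\,\bar\partial\gamma'$, where $\gamma'$ is a qc extension of $\gamma$ across $K$. The corresponding family $\widetilde f_t$ is obtained by straightening the structure $(\varphi^{-1})^*$ of it. The map from $\partial_t f_t|_{t=0}$ to $\partial_t\widetilde f_t|_{t=0}$ will be shown to be well defined modulo horizontal vectors and injective. Symmetry under $\varphi^{-1}$ then gives equality of the dimensions.

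The main obstacle is well-definedness and linearity: $(\varphi^{-1})^*$ of a sum of Beltrami coefficients is not the sum, so the map on tangent vectors must be obtained by differentiating a holomorphic family rather than by a formula. I will first try to use Bers--Royden holomorphic dependence to show that the induced map on quotient tangent spaces is the derivative of a holomorphic map, and hence $\C$-linear. I will then check that horizontal vectors (deformations that are conformal on $K$ and trivial externally) go to horizontal ones. If this becomes too technical, the fallback is to use Theorem~\ref{thm:continuity} along a path of qc deformations $f_s$ obtained by straightening $s\mu_\varphi$, $s\in[0,1]$. This path has constant criticality, so the dimension is locally constant along it and hence constant.
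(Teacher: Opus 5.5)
\textbf{The primary route fails, and the gap is not just technical.}

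\emph{First}, you want to realise a class of $\mathcal{B}_W/E^h(f,W)$ by straightening $t\,\bar\partial\gamma'$. For that, $\bar\partial\gamma'$ must be $f$-invariant. With $\gamma'=\gamma$ off $K(f)$, this is equivalent to $\gamma'\circ f-f'\gamma'=v$ on all of $U$. Such qc extensions do not exist in general, because vertical directions are in general not produced by any qc deformation. For $f(z)=z^2$, the vector $v\equiv 1$ is vertical with $\gamma=-\sum_{n\ge 1}1/(f^n)'$ (cf. \S\ref{subsec:counterex}). An invariant extension would give, after straightening with normalisation $z+o(1)$ at $\infty$, a family $z^2+t+O(t^2)$ of maps qc conjugate to $z^2$. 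That is impossible, since the conjugacy must keep the critical point fixed.

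\emph{Second}, the transport $\mu\mapsto(\varphi^{-1})^*\mu$ does not move the base point. Straightening $(\varphi^{-1})^*\mu_t$ over $\widetilde f$ gives $A_t\circ\phi^{\mu_t}\circ f\circ(\phi^{\mu_t})^{-1}\circ A_t^{-1}$ with $A_t$ affine. This is your family $f_t$ again, passing through $f$, not through $\widetilde f$. The transport also preserves ``vanishing on $K$'' only when $\varphi$ is hybrid, whereas the corollary assumes only qc equivalence.

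More fundamentally, a qc map acts neither on holomorphic germs nor on qc vector fields, so no transport at a single base point can see the vertical complement. At best it matches the infinite-dimensional spaces of invariant Beltrami differentials that generate $E^h(f)$ and $E^h(\widetilde f)$, which says nothing about codimension. Comparing the two base points forces you to move along a path.

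\textbf{Your fallback is exactly the paper's proof, and it is sound.} Since $\varphi$ conjugates holomorphic maps near $K(f)$, $\mu_\varphi$ is $f$-invariant there. So if $h_s$ solves the Beltrami equation for $s\mu_\varphi$, the maps $f_s=h_s\circ f\circ h_s^{-1}$ are GPL germs. They all have the same criticality, because local degrees are topological invariants. Near each $s_0$, fix $Y$ with $K(f_{s_0})\subset Y\Subset U_{s_0}$; then $s\mapsto f_s|_Y$ is continuous in $\mathcal{Q}_Y$. Theorem~\ref{thm:continuity} gives local constancy, hence constancy on $[0,1]$. Finally, $h_1$ and $\varphi$ have the same dilatation, so $h_1=A\circ\varphi$ with $A$ affine, and $f_1$ is affinely conjugate to $\widetilde f$.

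One point should be stated correctly: identifying $\dim E^{vert}_*(f,U)$ with $D$. Shrinking to $W\Subset U$ only computes $D(f,U)$. What you need is that a GPL extends holomorphically past $\overline U$, so $U\Subset W_0$ for some $W_0$. Then Theorem~\ref{thm:codimEh0} and Lemma~\ref{lem:verticalgamma} make the dimension independent of the domain. The example in \S\ref{subsec:counterex} shows this is not automatic.
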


\begin{lemma}\label{lem:dense} Let $f_0\in \mathcal{Q}_{W_0}$ and let $W$ and $\widetilde{W}$ be open subsets of $\C$ such that 
$W_0\Supset W\Supset \widetilde{W}\supset K_*(f_0)$. Assume $\widetilde{W}$ is a finite union of Jordan disks with pairwise disjoint closure. Then $\{v|_{\widetilde{W}}: v\in E^h(f_0, W)\}$ is dense in $E^h(f_0|_{\widetilde{W}},\widetilde{W}).$
\end{lemma}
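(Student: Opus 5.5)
Fix $u\in E^h(f_0|_{\widetilde W},\widetilde W)$. By definition there is a continuous $\beta:\C\to\C$ with $\beta/dz$ a qc vector field, $\bar\partial\beta=0$ a.e. on $K_*(f_0)$, and $u=\beta\circ f_0-f_0'\beta$ on a neighborhood $N$ of $K_*(f_0)$. The aim is to produce $v_n\in E^h(f_0,W)$ with $\|v_n|_{\widetilde W}-u\|_{\widetilde W}\to0$. The identity $u=\beta\circ f_0-f_0'\beta$ holds only near $K_*(f_0)$, while on the rest of $\widetilde W$ the function $u$ is an arbitrary holomorphic function continuous up to $\partial\widetilde W$. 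So the approach is: first approximate $u$ on $\overline{\widetilde W}$ by functions holomorphic on a neighborhood of $\overline{W}$, and then correct these approximants to be horizontal for $f_0$ on $W$ without destroying the approximation.

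\emph{Step 1 (Mergelyan/Runge).} $\widetilde W$ is a finite union of Jordan disks with pairwise disjoint closures, so $\C\setminus\overline{\widetilde W}$ is connected. By Mergelyan's theorem, $u$ is a uniform limit on $\overline{\widetilde W}$ of polynomials $p_n$, and in particular $p_n|_W\in\mathcal B_W$. Let $w_n:=p_n|_{\widetilde W}-u$, so $\|w_n\|_{\widetilde W}\to0$.

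\emph{Step 2 (correcting near $K_*(f_0)$).} We need $v_n\in E^h(f_0,W)$ with $v_n-p_n$ small on $\widetilde W$. Use the decomposition of Theorem~\ref{thm:codimEh0} on $W$, which applies because $W\Subset W_0$: write $p_n|_W=a_n+b_n$ with $a_n\in E^h(f_0,W)$ and $b_n\in E^{vert}_*(f_0,W)$. Also apply Theorem~\ref{thm:codimEh0} on $\widetilde W$ with $W_0$ replaced by $W$; this gives $\mathcal B_{\widetilde W}=E^h(f_0,\widetilde W)\oplus E^{vert}_*(f_0,\widetilde W)$. The next facts follow directly from the definitions, because the defining $\gamma$ and $\beta$ do not depend on the domain:
\begin{itemize}
\item restriction maps $E^{vert}_*(f_0,W)$ injectively into $E^{vert}_*(f_0,\widetilde W)$, injectively because of the normalization $\gamma(\infty)=0$;
\item restriction maps $E^h(f_0,W)$ into $E^h(f_0,\widetilde W)$.
\end{itemize}
Both vertical spaces have dimension $D$, so restriction is an isomorphism between them. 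Uniqueness of the decomposition on $\widetilde W$ then gives $a_n|_{\widetilde W}=\pi^h(p_n|_{\widetilde W})$ and $b_n|_{\widetilde W}=\pi^v(p_n|_{\widetilde W})$, where $\pi^h,\pi^v$ are the projections on $\mathcal B_{\widetilde W}$. These projections are bounded, so
\[
\pi^v(p_n|_{\widetilde W})=\pi^v(u+w_n)=\pi^v(w_n)\to0,
\]
using $\pi^v(u)=0$. Hence $a_n|_{\widetilde W}=p_n|_{\widetilde W}-\pi^v(w_n)\to u$ uniformly on $\widetilde W$, and $v_n:=a_n$ does the job.

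\emph{Main obstacle.} The delicate point is applying Theorem~\ref{thm:codimEh0} on $\widetilde W$ with ambient domain $W$. This needs $f_0|_W\in\mathcal Q_W$ (with the same $K_*$, which is plausible since $K_*(f_0)\subset\widetilde W\subset W$ and a GPL restriction of $f_0$ can be taken inside $\widetilde W$ by Lemma~\ref{lem:smallglp}), $\widetilde W\Subset W$, and the integer $D(f_0|_W,W)$ to equal $D(f_0,W_0)$. The last equality holds because $D(f_0,W_0)$ is the codimension of $E^h(f_0,W')$ for every intermediate $W'$, in particular $W'=\widetilde W$. I would check these points carefully, and also the injectivity of restriction on vertical spaces: if $\gamma\circ f_0-f_0'\gamma=0$ on $\widetilde W\setminus K_*$, it vanishes on $W\setminus K_*$ by the identity principle, since each component of $W$ meets $\widetilde W$. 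The Jordan-disk hypothesis is used only in Step 1.
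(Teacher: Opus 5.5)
Your proposal is correct and follows the paper's proof essentially verbatim. Both arguments approximate $u$ by polynomials via Mergelyan, split each polynomial on $W$ using Theorem~\ref{thm:codimEh0}, restrict both pieces to $\widetilde W$, and use boundedness of the projections on $\mathcal{B}_{\widetilde W}$ to show the vertical parts tend to zero.

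Two simplifications: your ``main obstacle'' does not arise, since $\widetilde W\Subset W\Subset W_0$ lets you apply Theorem~\ref{thm:codimEh0} on $\widetilde W$ directly with the original ambient $W_0$. Also, you only need that the restrictions land in the respective horizontal and vertical subspaces of $\mathcal{B}_{\widetilde W}$, not that restriction is an isomorphism of vertical spaces.
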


\subsection{The decomposition and technical lemma}

In the following, we assume that $f_0, W, W_0$ are as in Theorem~\ref{thm:codimEh0}. 
\begin{lemma}\label{lem:verticalgamma}
Each $v\in E^{vert}_*(f_0,W)$ has a unique analytic extension to a holomorphic function defined on $W_0$. 
\end{lemma}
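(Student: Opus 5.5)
The idea is to rewrite the vertical equation (\ref{eqn:vertical}) as a formula that defines $v$ on an annular region outside $W$, and then glue it to $v|_W$. Fix $v\in E^{vert}_*(f_0,W)$ with associated $\gamma$, holomorphic on $\C\setminus K_*(f_0)$ and tending to $0$ at infinity. Since $f_0\in\mathcal{B}_{W_0}$ is holomorphic on $W_0$, the right-hand side
$$\widetilde v(z):=\gamma(f_0(z))-f_0'(z)\gamma(z)$$
is defined and holomorphic at every $z\in W_0\setminus K_*(f_0)$ with $f_0(z)\notin K_*(f_0)$. Such points include all of $W_0\setminus K_*(f_0)$: if $z\in W_0\setminus K_*(f_0)$ and $f_0(z)\in K_*(f_0)\subset W\subset\overline{W_0}$, then every forward iterate of $z$ lies in $\overline{W_0}$. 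Hence $z\in K_*(f_0,W_0)=K_*(f_0)$, a contradiction. So $\widetilde v$ is holomorphic on $W_0\setminus K_*(f_0)$.

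Next I glue. On $W\setminus K_*(f_0)$ the hypothesis gives $\widetilde v=v$. Define $\hat v=v$ on $W$ and $\hat v=\widetilde v$ on $W_0\setminus K_*(f_0)$. The two definitions agree on the overlap $W\setminus K_*(f_0)$. The union of the two domains is $W_0$, because $K_*(f_0)\subset W$. Therefore $\hat v$ is a holomorphic function on $W_0$ extending $v$.

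Uniqueness is the identity theorem applied componentwise. By the definition of $\mathcal{Q}_{W_0}$, every component of $W_0$ meets $K_*(f_0)\subset W$. So every component of $W_0$ contains a nonempty open subset of $W$, on which any extension must equal $v$.

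The only point needing care is that $K_*(f_0)$ is the same set whether computed relative to $W$ or to $W_0$. In the argument above it is taken relative to $W_0$, as it is in the definitions of the spaces $E^{vert}_*(f_0,W)$. If it were interpreted relative to $W$, one would first check $K_*(f_0|_W,W)=K_*(f_0,W_0)$. One inclusion is trivial. For the other, an orbit staying in $\overline W\subset\overline{W_0}$ lies in $K_*(f_0,W_0)\subset W$. Conversely, points of $K_*(f_0,W_0)$ are contained in $W$, and so are all their iterates, since $K_*(f_0,W_0)$ is forward invariant. I expect no real obstacle beyond this bookkeeping.
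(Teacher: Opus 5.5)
Your proof is correct and follows the paper's argument. You extend $v$ to $W_0\setminus K_*(f_0)$ by $\gamma(f_0(z))-f_0'(z)\gamma(z)$, glue with $v$ on $W\supset K_*(f_0)$, and get uniqueness from the identity theorem because every component of $W_0$ meets $K_*(f_0)\subset W$; your explicit check that $f_0(z)\in K_*(f_0)$ forces $z\in K_*(f_0)$ spells out a step the paper only asserts.
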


\begin{proof} 
We first prove the existence of a holomorphic extension $\widehat{v}: W_0\to \C$.  By definition of a vertical vector field, there exists a holomorphic function $\gamma:\C\setminus K_*(f_0)\to \C$ 
such that (\ref{eqn:vertical}) holds for $z\in W\setminus K_*(f_0)$ and such that (\ref{eqn:gamma0}) holds. 
Since $f_0\in 
\mathcal{B}_{W_0}$, for $z\in W_0\setminus K_*(f_0)$, $f_0(z)\not\in K_*(f_0)$. 
Thus
$$\widehat{v}(z)=\left\{\begin{array}{ll}
\gamma(f_0(z))-f'_0(z)\gamma(z)& \mbox{ if } z\in W_0\setminus K_*(f_0);\\
v(z) &\mbox{ if } z\in K_*(f_0)
\end{array}
\right.
$$
is holomorphic in $W_0$. Clearly, $\widehat{v}|_{W}=v$.

Now let us prove the uniqueness. It suffices to show that if $\widehat{v}$ is holomorphic in $W_0$ and $\widehat{v}|_{W}=0$, then $\widehat{v}\equiv 0.$
Let $A_0$ be an arbitrary connected component of $W_0$. Since $f_0\in \mathcal{Q}_{W_0}$, $A_0$ contains a point $a\in K_*(f_0)$. Since $W\supset K_*(f_0)$, $W\ni a$. Let $A$ be the component of $W$ which contains $a$. Then $A\subset A_0$. Since $\widehat{v}|_{A}=0$, we have $\widehat{v}|_{A_0}=0$. 
\end{proof}

\begin{lemma}\label{lem:expansion} Let $f:U\to V$ be a GPL such that $f$ has no critical point in $U\setminus K(f)$. Let $V_k=(f|_U)^{-k} (V)$, $k=0,1,\dots$. Then 
$$\min_{z\in \partial V_k} |(f^k)'(z)|\to\infty, \text{ as } k\to\infty.$$
\end{lemma}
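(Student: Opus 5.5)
The plan is to apply the hyperbolic metric of $V$ (or of each of its components) and pull it back along iterates, using the Schwarz–Pick lemma to get a uniform contraction of the inverse branches away from the filled Julia set. Since $f$ has no critical point in $U\setminus K(f)$, the map $f:U\setminus f^{-1}(K(f))\to V\setminus K(f)$ is an unbranched covering. Because $U\Subset V$, the inclusion $U\hookrightarrow V$ strictly contracts hyperbolic metrics, uniformly on compact sets. Let $\rho_V$ be the hyperbolic density on $V$ (componentwise) and $\rho_U$ that on $U$. Then $f:U\to V$ is a local isometry from $\rho_U$ to $\rho_V$ wherever it is unbranched, while $\rho_V\le \rho_U$ on $U$.

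The key quantitative step is as follows. The set $A=\overline{V_0\setminus V_1}=\overline{V\setminus U}\cap\{\text{near }\partial U\}$ is not compact in $V$ because it touches $\partial V$, so I will work with the compact "fundamental annuli" $\overline{V_1}\setminus V_2$ instead. For $z\in \partial V_k$ with $k\ge 2$, the point $f^{k-1}(z)$ lies on $\partial V_1=\partial U$, which is a compact subset of $V$ containing no critical point. I will use the Euclidean derivative with the hyperbolic density as intermediary:
\[
|(f^{k})'(z)| = |f'(f^{k-1}z)|\cdot|(f^{k-1})'(z)|,
\qquad
|(f^{k-1})'(z)| \ge \frac{\rho_{V}(z)}{\rho_{V}(f^{k-1}z)}\,\lambda^{k-2}\cdot c,
\]
where $\lambda>1$ comes from the strict contraction $\rho_V<\lambda^{-1}\rho_U$ on the compact set $\overline{V_1}\setminus V_2$ together with a neighborhood of $\partial U$. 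The factor $\lambda$ should be gained at each step, since each point $f^j(z)$ with $0\le j\le k-1$ lies on $\partial V_{k-j}$, and $\partial V_{m}\subset \overline{V_1}$ for $m\ge1$. The main obstacle is that $\partial V_m$ accumulates on $K(f)$, where the contraction constant $\rho_V/\rho_U$ on $U$ is not uniformly less than $1$ unless one is careful. In fact $\rho_V/\rho_U\le\lambda^{-1}<1$ holds uniformly on all of $\overline{U}$, hence on all of $U$: the ratio is continuous on $U$, tends to $0$ at $\partial U$, and is less than $1$ on the compact set $\overline{U}$ because $\overline U\subset V$. This avoids the obstacle, and I expect it to be the crux.

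Given the uniform bound $\rho_V\le \lambda^{-1}\rho_U$ on $U$, the chain rule yields the estimate. For $w\in U$ with $f$ unbranched at $w$,
\[
\rho_V(f(w))|f'(w)|=\rho_U(w)\ge\lambda\rho_V(w),
\]
and iterating along $z,f(z),\dots,f^{k-1}(z)\in U$ gives $\rho_V(f^k z)|(f^k)'(z)|\ge\lambda^k\rho_V(z)$. Now $f^k(z)\in\partial V$, which is a problem since $\rho_V$ blows up there. I will instead stop at $f^{k-1}(z)\in\partial U$, where $\rho_V$ is bounded above by a constant $M$. Moreover, $z\in\partial V_k\subset \overline{U}$, where $\rho_V\ge m>0$. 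Therefore $|(f^{k-1})'(z)|\ge \lambda^{k-1}m/M$. Multiplying by $\min_{\partial U}|f'|>0$, which is positive since $f$ has no critical points on $\partial U$ by the GPL definition, gives $\min_{\partial V_k}|(f^k)'|\ge c\lambda^{k-1}\to\infty$. The remaining routine checks are that every point of the orbit segment is a non-critical point of $f$, so that $f$ is a local isometry $\rho_U\to\rho_V$ there (this holds because the orbit points lie in $U\setminus K(f)$), and that the componentwise hyperbolic metrics are well defined, since each component is a Jordan disk.
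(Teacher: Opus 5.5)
Your argument fails at the claim that $f$ is a local isometry from $\rho_U$ to $\rho_V$ at every non-critical point, i.e.\ $\rho_V(f(w))|f'(w)|=\rho_U(w)$. Non-criticality at $w$ is not enough. By Schwarz--Pick, equality at even one point forces $f$, restricted to the component of $U$ containing $w$, to be a covering of the corresponding component of $V$. Since both are Jordan disks, that covering would be a conformal isomorphism. On any component of $U$ containing a critical point (necessarily in $K(f)$), $f$ is proper of degree at least $2$, and you only get $\rho_V(f(w))|f'(w)|<\rho_U(w)$ at every point. This is the wrong direction. Near a critical point the $\rho_V$-norm of $f'$ even tends to $0$.

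So the chain estimate $\rho_V(f^kz)|(f^k)'(z)|\ge\lambda^k\rho_V(z)$ is unjustified, and your final bound $\min_{\partial V_k}|(f^k)'|\ge c\lambda^{k-1}$ is false in general. Take $f(z)=z^2+1/4$ with $V=\{G<t\}$ and $U=f^{-1}(V)$, where $G$ is the Green function of $K(f)$. In the coordinate $w=z-1/2$ the map is $w\mapsto w+w^2$ with $f'=1+2w$. Consider the point $1/2+\epsilon_k$ where $\partial V_k$ meets $(1/2,\infty)$. It needs $k\asymp 1/\epsilon_k$ iterates to escape past the parabolic point, and $|(f^k)'(1/2+\epsilon_k)|\asymp \epsilon_k^{-2}\asymp k^2$, so the growth is only polynomial. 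The uniform bound $\rho_V\le \lambda^{-1}\rho_U$ on $U$, which you single out as the crux, is true but is not what the argument needs.

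The repair is to use the metrics your first sentence points to and then abandons. Let $\rho$ and $\rho_0$ be the hyperbolic densities of $V\setminus K(f)$ and $U\setminus K(f)$. Since $f^{-1}(K(f))\cap U=K(f)$ and there are no critical points in $U\setminus K(f)$, the map $f:U\setminus K(f)\to V\setminus K(f)$ is a genuine covering. Hence $\rho(f(w))|f'(w)|=\rho_0(w)>\rho(w)$ there.

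This expansion cannot be made uniform; the example above forces $\sup_{U\setminus K(f)}\rho/\rho_0=1$. So along the orbit of $z\in\partial V_k$ you only get $\rho(f^{k-1}z)\,|(f^{k-1})'(z)|>\rho(z)$, with no factor $\lambda^{k-1}$. The divergence comes from the endpoints instead. The density $\rho$ is bounded on $\partial U=\partial V_1$, while $\min_{\partial V_k}\rho\to\infty$ because $\partial V_k$ converges to $K(f)$, which lies in the boundary of $V\setminus K(f)$. Multiplying by $\min_{\partial U}|f'|>0$ finishes the proof; this is exactly the paper's argument.
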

\begin{proof} Let $\rho(z)|dz|$ (resp. $\rho_0(z)|dz|$) denote the hyperbolic metric on $V\setminus K(f)$ (resp. $U\setminus K(f)$). Since $f$ has no critical point in $U\setminus K(f)$, $f:U\setminus K(f)\to V\setminus K(f)$ is a covering, hence 
$|f'(z)|\rho(f(z))=\rho_0(z)$ holds for all $z\in U\setminus K(f)$. By Schwarz Lemma, 
$\rho_0(z)>\rho(z)$ for all $z\in U\setminus K(f)$. Thus for all $z\in U\setminus K(f)$, 
$$\|f'(z)\|=\rho (f(z))|f'(z)|/\rho(z)>1.$$
Thus for all $k\ge 1$ and $z\in V_k$, $\|(f^{k-1})'(z)\|=\rho(f^{k-1}(z))|(f^{k-1})'(z)|/\rho(z)>1.$ As $k\to\infty$, $d(\partial V_k, K(f))\to 0$, so $\min_{z\in \partial V_k}\rho(z)\to\infty$. Since $\max_{w\in \partial V_1}\rho(w)<\infty$, 
it follows that $\min_{z\in \partial V_k}|(f^{k-1})'(z)|\to\infty$ as $k\to\infty$. Since $\min_{w\in\partial V_1}|f'(w)|>0$, the statement follows. 
\end{proof}
\begin{lemma}[Lyubich's technical lemma]\label{lem:Lyutech}  Consider $f_0\in \mathcal{Q}_{W_0}$. Let $X, Y$ be open subsets of $\C$ with $K_*(f_0)\subset X\Subset Y\Subset W_0$. 
Then there is a neighborhood $\mathcal{V}_{W_0}(f_0)$ of $f_0$ in $\mathcal{Q}_{W_0}$ and a constant $C>0$ such that for any $f\in \mathcal{V}_{W_0}(f_0)$ and $v\in E^{vert}_*(f, Y)$ with $\|v\|_X=1$, we have $X\supset K_*(f)$ and 
$$\|v\|_{Y}\le C, \sup_{z\in \C\setminus X}|\gamma(z)|\le C,$$
where $\gamma:\mathbb{C}\setminus K_*(f)\to \mathbb{C}$ is a holomorphic function such that (\ref{eqn:vertical}) and (\ref{eqn:gamma0}) hold (with $f_0$ replaced by $f$).     
\end{lemma}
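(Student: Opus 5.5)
We follow Lyubich's quadratic-like argument, adapted to several components and critical points. Choose open sets $X'$, $Y'$ with $X\Subset X'\Subset Y'\Subset Y$, and take a GPL restriction $f_0:U_0\to V_0$ as in Lemma~\ref{lem:smallglp} with $K_*(f_0)\subset U_0\Subset V_0\Subset X$. This may require passing to a deeper restriction, since $V_k$ shrinks to $K_*(f_0)$. By Lemma~\ref{lem:persistglp} and Lemma~\ref{lem:K*cont}, there is a neighborhood $\mathcal{V}_{W_0}(f_0)$ on which every $f$ has a GPL restriction $f:U_f\to V_0$, where $U_f=h_f(U_0)$ moves holomorphically and stays uniformly close to $U_0$. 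These restrictions satisfy $K_*(f)=K(f:U_f\to V_0)\subset U_f\Subset X$, which gives the first assertion $X\supset K_*(f)$. Shrinking the neighborhood, we may also assume that $f$ has no critical points in a fixed collar neighborhood $A$ of $\overline{V_0}\setminus U_f$, uniformly in $f$, and that $|f'|$ is bounded above and below there.

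Now fix $v\in E^{vert}_*(f,Y)$ with $\|v\|_X=1$ and its $\gamma$, which is holomorphic off $K_*(f)$ and tends to $0$ at $\infty$. The key step is to bound $\gamma$ on the fixed curve $\Gamma=\partial V_0$. The identity $v=\gamma\circ f-f'\gamma$ on $Y\setminus K_*(f)$ gives $\gamma(z)=(\gamma(f(z))-v(z))/f'(z)$. Iterating this along the puzzle depth with the notation $V_k(f)$, we express $\gamma$ on $\partial V_k(f)$ as a sum of $v$-terms and a remainder $\gamma|_{\partial V_0}/(f^k)'$.

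To close the argument, use the maximum principle in the form of a contradiction/normal-families argument. Suppose $f_n\to f_0$ and $v_n$ with $\|v_n\|_X=1$ but $M_n:=\sup_{\C\setminus X}|\gamma_n|\to\infty$. By the maximum principle on $\widehat{\C}\setminus X$, with $\gamma_n(\infty)=0$, the supremum is attained on $\partial X$. Normalize $\tilde\gamma_n=\gamma_n/M_n$ and $\tilde v_n=v_n/M_n$, so $\tilde v_n\to 0$ uniformly on $X$. The functional equation $\tilde\gamma_n\circ f_n=f_n'\tilde\gamma_n+\tilde v_n$ holds on $X\setminus K_*(f_n)$. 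Pulling back, we get bounds $|\tilde\gamma_n|\le C$ on $\partial V_k(f_n)$ for every fixed $k$, and hence on $\C\setminus V_k(f_n)$. Pass to a limit $\tilde\gamma$ that is holomorphic off $K_*(f_0)$, vanishes at $\infty$, has $\sup=1$ attained on $\partial X$, and satisfies $\tilde\gamma\circ f_0=f_0'\tilde\gamma$ near $K_*(f_0)$ outside it. Then Lemma~\ref{lem:expansion}, via the pullback $\tilde\gamma=\tilde\gamma\circ f_0^k/(f_0^k)'$ on $\partial V_k$, forces $\tilde\gamma\to 0$ as we approach $K_*(f_0)$. By the maximum principle on $\C\setminus K_*(f_0)$ we get $\tilde\gamma\equiv0$, a contradiction.

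The main obstacle is making the convergence uniform near $K_*(f_n)$, which moves with $n$. We handle it through the uniform expansion estimate of Lemma~\ref{lem:expansion} applied uniformly in $f$ near $f_0$. We expect the constant in that estimate to persist under perturbation because of the holomorphic motion of the puzzle boundaries $\partial V_k$ for fixed $k$. We need it at one fixed depth $k$ with $|(f^k)'|\ge 2$, and this passes to $f$ near $f_0$ by continuity. This gives the bound $\sup_{\C\setminus X}|\gamma|\le C$ directly, and the contradiction argument becomes optional. Finally, $\|v\|_Y\le C$ follows on $Y\setminus X$ from $v=\gamma\circ f-f'\gamma$. There $\gamma$ is bounded, $f$ maps $Y\setminus X$ outside $K_*(f)$ at definite distance from it, and $\gamma$ is bounded off $X$ and off a neighborhood of $K_*(f)$ by the maximum principle. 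Combining this with $\|v\|_X=1$ gives the estimate on $Y$.
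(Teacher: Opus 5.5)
The route you settle on in your last paragraph (one fixed depth $k$ with $|(f^k)'|>2$ on $\partial V_k(f)$, persisting by continuity, plus the telescoped functional equation and the maximum principle) is the paper's. However, the step that actually closes it is never written, and the contradiction argument you do write out has a gap exactly at that step. You normalize by $M_n=\sup_{\C\setminus X}|\gamma_n|$, yet $\partial V_0\subset V_0\Subset X$. Moreover, for $z\in\partial V_k(f_n)$ the orbit $z,\dots,f_n^k(z)$ never leaves $\overline{V_0}\subset X$. Each pullback step from $\partial V_j(f_n)$ to $\partial V_{j+1}(f_n)$ needs a bound at the previous level, ultimately on $\partial V_0$, and your normalization says nothing there. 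So ``pulling back, we get $|\tilde\gamma_n|\le C$ on $\partial V_k(f_n)$'' has no starting point, and without local bounds in $X\setminus K_*(f_0)$ there is no normal-families limit. Normalizing instead by $\sup_{\C\setminus V_0}|\gamma_n|$, which is attained on the fixed curve $\partial V_0$, repairs the compactness route: pulling back at each fixed depth then works, and Lemma~\ref{lem:expansion} for $f_0$ alone kills the limit. But that is only a non-effective version of the direct argument.

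The missing idea is that the supremum absorbs itself. Let $M=\sup_{\C\setminus V_k(f)}|\gamma|$. Since $\gamma$ is holomorphic on $\widehat{\C}\setminus\overline{V_k(f)}$ and vanishes at $\infty$, $M=|\gamma(z_0)|$ for some $z_0\in\partial V_k(f)$. Telescoping $v=\gamma\circ f-f'\gamma$ along the orbit gives
\[
\sum_{j=0}^{k-1}(f^{k-1-j})'(f^{j+1}(z_0))\,v(f^{j}(z_0))=\gamma(f^k(z_0))-(f^k)'(z_0)\,\gamma(z_0).
\]
The left side is at most a constant $C_1$, because $f^j(z_0)\in\overline{V_0}\subset X$, where $|v|\le 1$ and $|f'|$ is uniformly bounded. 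Since $f^k(z_0)\in\partial V_0\subset\C\setminus V_k(f)$, we have $|\gamma(f^k(z_0))|\le M$, hence $M\le 2|\gamma(z_0)|-|\gamma(f^k(z_0))|\le C_1$. The values on $\partial V_0$ are controlled by the very supremum being estimated, so no separate bound there is needed.

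Two smaller points. First, your claim that $f$ has no critical points on a collar of $\overline{V_0}\setminus U_f$, with $|f'|$ bounded below there, is false in general. The map $f_0$ may have escaping critical points in $V_0\setminus\overline{U_0}$, and since no condition on $d(j_{W_0}f)$ is imposed, $f$ may have critical points in $U_f\setminus K_*(f)$. The claim is never needed, though. Second, your pointwise bound for $v$ on $\overline{Y}\setminus X$ is fine, and a bit cleaner than the paper's boundary argument, once ``definite distance'' is measured from $K_*(f_0)$. Indeed, $f_0(\overline{Y}\setminus X)$ is compact and disjoint from $K_*(f_0)$, so it misses $\overline{V_k}$ for $k$ large. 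This persists for $f$ near $f_0$, and then $|\gamma\circ f|\le M$ there. The maximum principle bounds $\gamma$ off $V_k(f)$, which is close to $V_k$. It does not give a uniform bound off a neighborhood of $K_*(f)$, which may be much smaller than $K_*(f_0)$.
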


\begin{proof} Shrinking $X$ if necessary, we may assume that $j_{W_0}f_0$ has a GPL representative $f_0: U\to V$ with $V\Subset X$ and $f_0$ has no critical point in $\overline{U}\setminus K_*(f_0)$. Let $V_0=V$ and define inductively $V_{k+1}=f_0^{-1}(V_k)\cap U$ for each $k\ge 0$. Then $\bigcap_{k=0}^\infty V_k=K_*(f_0)$. By Lemma~\ref{lem:expansion}, there is a positive integer $k$ such that $|(f_0^k)'(z)|> 2$ holds for all $z\in \partial V_k$. 
Fix $k$ and take a small neighborhood $\mathcal{V}_{W_0}(f_0)$ of $f_0$ in $\mathcal{B}_{W_0}$.
Then, as in Lemma~\ref{lem:persistglp}, there is a holomorphic motion $h_g$ of $\mathbb{C}$ over $(\mathcal{V}_{W_0}(f_0), f_0)$ such that for each $f\in \mathcal{V}_{W_0}$,
\begin{itemize}
\item $h_f(V)=V$; 
\item $f: h_f(U)\to V$ is a GPL representative $j_{W_0}(f)$;     
\item for $z\in \partial U$, $h_f(f_0(z))=f_0(h_f(z))$. 
\end{itemize}
By continuity, for $f\in \mathcal{V}_{W_0}(f_0)$, $|(f^k)'(z)|>2$ holds for all $z\in \partial V_k^f$, where $V_k^f=(f|_{U_f})^{-k}(V)$.

Now fix $f\in \mathcal{V}_{W_0}$, let $v\in E^{vert}_*(f, Y)$ with $\|v\|_{X}=1$ and let $\gamma:\mathbb{C}\setminus K_*(f)\to \mathbb{C}$ be a holomorphic function such that (\ref{eqn:vertical}) and (\ref{eqn:gamma0}) holds (with $f_0$ replaced by $f$). 
For $z\in \partial V^k_f$, 
$$v(f^{k-1}(z))+f'(f^{k-1}(z))v(f^{k-2}(z))+\dots+ f'(f(z)) v(z)  =\gamma(f^k(z))-(f^k)'(z) \gamma(z).$$
The left hand side is bounded from above by a constant $C_1$ (depending only on  $k$). Thus 
$$2|\gamma(z)|-|\gamma(f^k(z))|\le C_1.$$
Now let $M=\sup_{z\in \mathbb{C}\setminus V_f^k} |\gamma(z)|$. 
By the maximum principle, $M=|\gamma(z_0)|$ for some $z_0\in\partial V^k_f$. 
Thus $$M\le 2|\gamma(z_0)|-|\gamma(f^k(z_0)|\le C_1.$$ 
In particular, $$\sup_{z\in \mathbb{C}\setminus X} |\gamma(z)|\le M\le C_1.$$
Shrinking $X$ further, we may assume $f(\partial W_0)\cap X=\emptyset$. Enlarging $Y$ so that $\partial Y$ is very close to $\partial W_0$. Then for $z\in \partial Y$, $|\gamma(z)|\le M, |\gamma(f(z))|\le M$, so
$$|v(z)|\le |\gamma(f(z))|+|f'(z)||\gamma(z)|$$
is bounded from above by a constant $C_2$. By the maximum principle, it follows that $\|v\|_Y\le C_2$. 
\end{proof}

 

\begin{lemma}\label{lem:vertcomp} Let $f_n\to f_0$ in $\mathcal{Q}_{W_0}$ and $K_*(f_0)\subset W\Subset W_0$. Consider $v_n\in E^{vert}_*(f_n,W)$ with $\|v_n\|_{W}\le 1$. Then there is a subsequence $v_{n_k}$ which converges in $\mathcal{B}_{W}$ to some $v\in E^{vert}_*(f,W)$.
\end{lemma}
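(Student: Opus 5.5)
The plan is to use Lyubich's technical lemma (Lemma~\ref{lem:Lyutech}) for uniform bounds, Montel's theorem for compactness, and Hurwitz-type passage to the limit in the cohomological equation (\ref{eqn:vertical}). Fix open sets $X, Y$ with $K_*(f_0)\subset X\Subset W$ and $W\Subset Y\Subset W_0$. By Lemma~\ref{lem:verticalgamma}, each $v_n$ extends holomorphically to $W_0$, so $v_n\in E^{vert}_*(f_n,Y)$, with the same function $\gamma_n$. Since $\|v_n\|_X\le\|v_n\|_W\le 1$, Lemma~\ref{lem:Lyutech} (applied after normalizing by $\|v_n\|_X$; the case $\|v_n\|_X=0$ is handled below) gives, for $n$ large, $K_*(f_n)\subset X$, $\|v_n\|_Y\le C$, and $\sup_{\C\setminus X}|\gamma_n|\le C$. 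If $\|v_n\|_X=0$, then $v_n\equiv0$ on each component of $W_0$, because each component contains points of $K_*(f_n)\subset X$. In that case the claim is trivial for that term.

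The first step is compactness. The $v_n$ are uniformly bounded on $Y\Supset\overline W$, so by Montel's theorem a subsequence converges uniformly on $\overline W$ to some $v$ holomorphic on $Y$; in particular $v_{n_k}\to v$ in $\mathcal{B}_W$. Next, the $\gamma_n$ are holomorphic and uniformly bounded on $\C\setminus X$. Using the maximum principle argument from the proof of Lemma~\ref{lem:Lyutech}, they are also uniformly bounded on $\C\setminus V^k_{f_n}$, and these sets exhaust $\C\setminus K_*(f_0)$ in the limit. I need uniform bounds on compact subsets of $\C\setminus K_*(f_0)$. Given a compact $F\subset\C\setminus K_*(f_0)$, the neighborhoods $V^m_{f_0}$ of $K_*(f_0)$ can be chosen, for $m$ large, to avoid $F$. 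By the holomorphic motion in Lemma~\ref{lem:persistglp}, $V^m_{f_n}$ then also avoids $F$ for $n$ large. The bound $M\le C_1$ in that proof holds on $\C\setminus V^k_f$ for the fixed $k$. To go deeper, I pull back using (\ref{eqn:vertical}): $\gamma_n(z)=(\gamma_n(f_n z)-v_n(z))/f_n'(z)$, with $f_n'$ bounded away from $0$ near $\overline{U}\setminus K$ (there are no critical points there). Finitely many iterations give bounds on $\C\setminus V^m_{f_n}\supset F$. Thus $\gamma_n$ is a normal family on $\C\setminus K_*(f_0)$, and a further subsequence converges locally uniformly to a holomorphic $\gamma$ on $\C\setminus K_*(f_0)$, with $|\gamma|\le C$ near $\infty$.

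Passing to the limit, for $z\in W\setminus K_*(f_0)$ we have $f_n\to f_0$ and $f_n'\to f_0'$ locally uniformly. Also $f_n(z)\to f_0(z)\notin K_*(f_0)$, so $\gamma_n(f_n(z))\to\gamma(f_0(z))$ by local uniform convergence. Hence $v(z)=\gamma(f_0(z))-f_0'(z)\gamma(z)$ on $W\setminus K_*(f_0)$.

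It remains to check the normalization (\ref{eqn:gamma0}). Each $\gamma_n$ is holomorphic near $\infty$, bounded by $C$, and vanishes at $\infty$. So $\gamma_n$ is a holomorphic function on $\{|z|>R\}\cup\{\infty\}$ with $\gamma_n(\infty)=0$. Uniform convergence on $|z|=2R$ together with the maximum principle on the disk around $\infty$ gives uniform convergence on $\{|z|\ge 2R\}\cup\{\infty\}$, so $\gamma(\infty)=0$. Therefore $v\in E^{vert}_*(f_0,W)$.

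I expect the main obstacle to be the locally uniform bound for $\gamma_n$ near $K_*(f_0)$, the pull-back step above. Lemma~\ref{lem:Lyutech} only controls $\gamma_n$ outside $X$, and the sets where $\gamma_n$ is defined vary with $n$. So the bound on an arbitrary compact $F\subset\C\setminus K_*(f_0)$ has to be obtained from the uniform puzzle structure (the holomorphic motion) together with the fact that $f_n'$ does not vanish off the filled Julia set near $\partial V^m$.
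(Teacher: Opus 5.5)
Your argument is correct and follows the same route as the paper. Both use Lemma~\ref{lem:Lyutech} for uniform bounds on $v_n$ and $\gamma_n$, then Montel's theorem, then passage to the limit in (\ref{eqn:vertical}). The one minor variation is that you get convergence in $\mathcal{B}_W$ directly from $\|v_n\|_Y\le C$ on $Y\Supset\overline{W}$, whereas the paper gets it from uniform convergence of $\gamma_{n_k}\circ f_{n_k}-f_{n_k}'\gamma_{n_k}$ on $\partial W$ plus the maximum modulus principle.

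The pull-back step you flag as the main obstacle is not needed. Since $X$ in Lemma~\ref{lem:Lyutech} may be any neighborhood of $K_*(f_0)$, you can reapply the lemma with $X$ disjoint from a given compact $F\subset\C\setminus K_*(f_0)$; this bounds $\gamma_n$ on $F$ for large $n$, which is how the paper obtains the local uniform bound. Your pull-back also works, but criticality is not assumed here, so the lower bound for $|f_n'|$ on $\overline{V^k_{f_n}}\setminus V^m_{f_n}$ needs a short limiting argument rather than ``there are no critical points there''.
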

\begin{proof} For each $n$, let $\gamma_n:\mathbb{C}\setminus K_*(f_n)\to \mathbb{C}$ be the holomorphic function with $\gamma_n(z)\to 0$ as $z\to\infty$ and such that 
$$v_n(z)=\gamma_n(f_n(z))-f_n'(z)\gamma(z)$$
holds for $z\in W\setminus K_*(f_n)$. By Lemma~\ref{lem:Lyutech}, $\gamma_n$ is locally uniformly bounded in $\widehat{\C}\setminus K_*(f_0)$. By Montel's theorem, there is a subsequence $n_k$ such that $v_{n_k}$ converges locally uniformly in $W$ to a holomorphic function $v:W\to \C$ and $\gamma_{n_k}$ converges locally uniformly in $\C\setminus K_*(f_0)$ to a holomorhic function $\gamma:\C\setminus K_*(f)\to \C$ with $\gamma(z)\to 0$ as $z\to\infty$. Moreover, 
$$v(z)=\gamma(f_0(z))-f'_0(z)\gamma(z)$$
holds for $z\in W\setminus K_*(f_0)$. Since $f_0\in \mathcal{B}_{W}$, $v$ extends continuously to $\overline{W}$, so $v\in E^{vert}_*(f_0,W)$.
Since $\gamma_{n_k}(f_{n_k}(z))\to \gamma(f_0(z))$ and $f_{n_k}'(z)\to f_0'(z)$ uniformly in $z\in \partial W$, it follows that 
$$\|v_{n_k}-v\|_{\partial W}\to 0,$$
which implies by the maximum modulus principle that $\|v_{n_k}-v\|_{W}\to 0$.
\end{proof}

The following is a straightforward generalization of \cite[Lemma 4.7]{L2}.
\begin{lemma} [Lyubich's decomposition lemma]\label{lem:Lyudecomp} 
Assume that $W\Subset W_0$. Then the decomposition (\ref{eqn:decomp}) holds as vector spaces. 
\end{lemma}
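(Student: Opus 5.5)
The plan is to follow Lyubich's proof of \cite[Lemma 4.7]{L2}. We must show two things: $E^h(f_0,W)\cap E^{vert}_*(f_0,W)=\{0\}$, and every $v\in\mathcal{B}_W$ splits as $v=v^h+v^{vert}$.

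For the trivial intersection, suppose $v$ is both horizontal and vertical. Then there is a qc vector field $\beta$ with $\bar\partial\beta=0$ a.e. on $K_*(f_0)$ and $v=\beta\circ f_0-f_0'\beta$ near $K_*(f_0)$. There is also a holomorphic $\gamma$ on $\C\setminus K_*(f_0)$, vanishing at $\infty$, with $v=\gamma\circ f_0-f_0'\gamma$ on $W\setminus K_*(f_0)$. Set $\delta=\beta-\gamma$ on $\C\setminus K_*(f_0)$. Then $\delta\circ f_0=f_0'\delta$ on a punctured neighborhood of $K_*(f_0)$, so $\delta/dz$ is an $f_0$-invariant vector field near $K_*(f_0)$ outside $K_*(f_0)$.

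Take a GPL restriction $f_0:U\to V$ as in Lemma~\ref{lem:smallglp}. The fundamental domain $\overline{V}\setminus U$ is compact, so $\delta$ is bounded there. Invariance gives $\delta(z)=\delta(f_0^k z)/(f_0^k)'(z)$, and Lemma~\ref{lem:expansion} makes $(f_0^k)'$ large on $\partial V_k$. Hence $\delta(z)\to0$ as $z\to K_*(f_0)$, uniformly on level sets. One then checks, as in Lyubich's argument, that the function equal to $\gamma$ off $K_*(f_0)$ and to $\beta$ on $K_*(f_0)$ is continuous with $\bar\partial=0$ in the distributional sense. This follows since $\bar\partial\beta=0$ a.e. on $K_*(f_0)$ and $\beta$ is a qc vector field, hence in $W^{1,p}_{loc}$, so the glued field is a $W^{1,p}$ limit and Weyl's lemma applies. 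The glued function is therefore entire, and since it tends to $0$ at $\infty$, it vanishes identically. So $\gamma\equiv0$ and $v=0$.

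For existence of the splitting, given $v\in\mathcal{B}_W$, the aim is to solve $v=\beta\circ f_0-f_0'\beta$ near $K_*(f_0)$ with a qc vector field $\beta$. Start with a smooth vector field $\beta_0$ on the fundamental annulus $\overline{V}\setminus U$, chosen to satisfy the equation on $\partial U$. This is possible because $f_0:\partial U\to\partial V$ is a covering, so $\beta_0|_{\partial U}$ can be defined from $\beta_0|_{\partial V}$. Then pull $\beta_0$ back through the dynamics via
$$\beta(z)=\frac{\beta(f_0z)-v(z)}{f_0'(z)},$$
which is allowed because $f_0$ has no critical points in $\overline U\setminus K_*(f_0)$. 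This defines $\beta$ on $V\setminus K_*(f_0)$. Its $\bar\partial$ is the pullback of a bounded Beltrami differential and is therefore bounded. Its size near $K_*(f_0)$, however, is not controlled. This is exactly why one only obtains a horizontal field up to a vertical correction.

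Following Lyubich, I would instead extend $\beta_0$ arbitrarily to a qc vector field $\beta_1$ on $\C$. Let $\mu=\bar\partial\beta$ on $V\setminus K_*(f_0)$, and set $\mu=0$ on $K_*(f_0)$; this $\mu$ is an $f_0$-invariant bounded Beltrami differential. Solve $\bar\partial\beta'=\mu$ globally with the Cauchy transform, so that $\beta'$ is a qc vector field with $\bar\partial\beta'=0$ on $K_*(f_0)$. Then $\gamma:=\beta-\beta'$ is holomorphic on $V\setminus K_*(f_0)$ and extends to $\C\setminus K_*(f_0)$ by $\beta_1-\beta'$ after adjusting. Normalize it so that $\gamma(\infty)=0$ by absorbing a polynomial into $\beta'$. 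Define $v^h=\beta'\circ f_0-f_0'\beta'$ and $v^{vert}=v-v^h$. One checks that $v^{vert}=\gamma\circ f_0-f_0'\gamma$ on $W\setminus K_*(f_0)$.

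The main obstacle is ensuring that $v^h$ and $v^{vert}$ lie in $\mathcal{B}_W$ and that the identities hold on all of $W\setminus K_*(f_0)$, not just on $U$. This is where $W\Subset W_0$ is used: $f_0$ is holomorphic on a neighborhood of $\overline W$, so both sides extend continuously to $\overline W$, and the holomorphic identity propagates from $U\setminus K_*(f_0)$ to each component of $W\setminus K_*(f_0)$ by analytic continuation, using that each component of $W$ meets $K_*(f_0)$.
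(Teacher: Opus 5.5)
Your approach is the paper's, which is Lyubich's. For the trivial intersection you use the $f_0$-invariant field $\delta=\beta-\gamma$, the expansion of Lemma~\ref{lem:expansion} to get $\delta\to 0$ at $K_*(f_0)$, and removability. For the splitting, write $\hat\beta$ for your pulled-back field glued with $\beta_1$ off $V$; then $\hat\beta$ is the paper's $\alpha$ and your $\beta'$ is the paper's $\beta$.

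One step, however, fails as written. Since $\bar\partial(\beta'\circ f_0-f_0'\beta')=\mu(f_0)\overline{f_0'}-f_0'\mu$, the field $v^h=\beta'\circ f_0-f_0'\beta'$ is holomorphic only where $\mu$ is $f_0$-invariant, i.e.\ on $U$. On $W\setminus\overline U$ it is in general not holomorphic. There $v-v^h$ differs from $\gamma\circ f_0-f_0'\gamma$ by $v-\hat\beta\circ f_0+f_0'\hat\beta$, which need not vanish, because the recursion was imposed only on $\overline U\setminus K_*(f_0)$. So your claim that $v^{vert}=v-v^h$ satisfies the vertical identity on $W\setminus K_*(f_0)$ is false. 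Analytic continuation cannot repair it, since $v-v^h$ is not holomorphic there.

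The fix is the paper's Step 5 (Lemma~\ref{lem:verticalgamma}). \emph{Define} $v^{vert}$ to be $v-v^h$ on $U$ and $\gamma\circ f_0-f_0'\gamma$ on $W_0\setminus K_*(f_0)$.
\begin{itemize}
\item These two expressions agree on $U\setminus K_*(f_0)$.
\item The second is holomorphic because $f_0(W_0\setminus K_*(f_0))\subset\C\setminus K_*(f_0)$.
\item Hence $v^{vert}$ is holomorphic on all of $W_0$, and its restriction lies in $\mathcal{B}_W$ precisely because $\overline W\subset W_0$.
\end{itemize}
Then set $v^h:=v-v^{vert}$. It is horizontal because it agrees with $\beta'\circ f_0-f_0'\beta'$ on $U$, and the horizontal identity is only required near $K_*(f_0)$. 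Your assumption that every component of $W$ meets $K_*(f_0)$ is neither available nor needed.

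A smaller point concerns $\beta_1$. For $\gamma=\hat\beta-\beta'$ to be holomorphic across $\partial V$, you need $\mu=\bar\partial\beta_1$ off $V$, or $\beta_1$ holomorphic there. Also, an arbitrary extension $\beta_1$ may spoil the behaviour of $\gamma$ at $\infty$. The paper's choice $\alpha\equiv 0$ off $V$ makes $\mu$ compactly supported and gives $\gamma(\infty)=0$ without any polynomial correction.
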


\begin{proof} The fact that $E^h(f_0,W)\cap E_*^{vert}(f_0,W)=\{0\}$ follows from the same argument as in the proof of \cite[Lemma 4.7]{L2}. Indeed, for $v\in E^h(f_0,W)\cap E^{vert}_*(f_0,W)$, let $\beta$ and $\gamma$ be such that (\ref{eqn:horizontal}), (\ref{eqn:vertical}) and (\ref{eqn:gamma0}) holds. Let $\delta=\beta-\gamma$. Then $\delta(z)/dz$ is a qc vector field, and $\delta(f_0(z))-f_0'(z) \delta(z)=0$ holds for $z\in W'\setminus K_*(f_0)$ for some open set $W'$ with $K_*(f_0)\subset W'\subset W$. Let $f_0:U\to V$ be a GPL representative of $j_{W_0}f_0$ with $U\subset W'$ and such that $f_0$ has no critical point in $U\setminus K_*(f_0)$. Let $V_k=(f_0|_U)^{-k}(V)$ for each $k\ge 0$. For $z\in U\setminus K_*(f_0)$, let $n(z)$ denote the minimal positive integer such that $f_0^{n(z)}(z)\not\in V\setminus U$. Then $$\delta(z)=\delta(f_0^{n(z)}(z))/(f_0^{n(z)})'(z).$$
As $z\to K_*(f_0)$, $n(z)\to \infty$, so by Lemma~\ref{lem:expansion}, $|(f_0^{n(z)})'(z)|\to\infty$, hence $\delta(z)\to 0$. This shows that $\delta$ extends to a continuous function $\widetilde{\delta}$ on $\C$ with $\delta|_{K_*(f_0)}=0$. Thus $\gamma=\beta-\delta$ 
extends to a continuous map $\widetilde{\gamma}=\beta-\widetilde{\delta}$ on $\C$ such that $\widetilde{\gamma}(z)/dz$ is a qc vector field on $\C$ and $\bar{\partial}\widetilde{\gamma}=\bar{\partial} \beta=0$ a.e. on $K_*(f_0)$. Thus $\widetilde{\gamma}$ is holomorphic in $\C$ and hence it is identically zero since $\gamma(z)\to 0$ as $z\to\infty$. Consequently, $v=0$ by (\ref{eqn:vertical}).

Let us show that any $v\in \mathcal{B}_{W}$ can be written as in the form $v=u+w$ with $u\in E^h(f_0,W)$ and $w\in E^{vert}_*(f_0,W)$. 

{\bf Step 1.} Choose a GPL representative $f_0:U\to V$ of $j_{W_0} f_0$ such that $f_0$ has no critical point in $\overline{U}\setminus K_*(f_0)$ and $U\Subset W$.

{\bf Step 2.} Find a qc vector field $\alpha(z)/dz$ in $\C\setminus U$ such that
for $z \in \partial U$,
\begin{equation}\label{eqn:alpharec} 
\alpha(z)=(\alpha(f_0(z))-v(z))/f'_0(z),
\end{equation}
such that $\bar{\partial}\alpha$ is compactly supported and $\alpha(z)\to 0$ as $z\to\infty$. 

For example, we can choose $\alpha$ such that $\alpha(z)=0$ for $z\in \C\setminus V$, use (\ref{eqn:alpharec}) to define $\alpha|_{\partial U}$ and then make a smooth interpolation. 

{\bf Step 3.} Extend $\alpha(z)/dz$ to a qc vector field in $\C\setminus K_*(f_0)$ such that 
(\ref{eqn:alpharec}) holds for $z\in U\setminus K_*(f_0)$.

Indeed, $K_*(f_0)=\bigcap_{n=1}^\infty V_n$, where $V_0=V$ and $V_{n+1}=(f_0|_U)^{-1}(V_n)$ for each $n\ge 0$. Since $f_0$ has no critical point in $\overline{U}\setminus K_*(f_0)$, we can inductively define $\alpha$ inductively in $V_n\setminus V_{n+1}$ using the formula (\ref{eqn:alpharec}). 

Note that 
$$\|\bar{\partial }\alpha\|_{L^\infty (\C\setminus K_*(f_0))}=\|\bar{\partial}\alpha\|_{L^\infty(\C\setminus U)}<\infty.$$

{\bf Step 4.} Find $\widetilde{u}\in E^h(f_0,U)$ and $\widetilde{w}\in E^{vert}_*(f_0,U)$ such that $v=\widetilde{u}+\widetilde{w}$.

To this end, let $\mu=\bar{\partial} \alpha$ on $\C\setminus K_*(f_0)$ and extend it to an $L^\infty$ function on $\mathbb{C}$ by setting $\mu(z)=0$ for $z\in K_*(f_0)$. Solving the Beltrami equation 
$$\bar{\partial} \beta=\mu,$$
we obtain a qc vector field $\beta(z)/dz$ on $\mathbb{C}$ with $\beta(z)\to 0$ as $z\to\infty$. 
Let $$\widetilde{u}(z)=\beta(f_0(z))-f'_0(z) \beta(z)$$ which is continuous in $\overline{U}$. As 
$\bar{\partial} \widetilde{u}(z)=0$ holds a.e. on $U$, by Weyl's lemma, we see that $u\in \mathcal{B}_{U}$.

Let $\gamma=\alpha-\beta$. Then $\bar{\partial}\gamma(z)=0$ a.e. on $\mathbb{C}\setminus K_*(f_0)$, so $\gamma$ is holomorphic in $\mathbb{C}\setminus K_*(f_0)$. Clearly, $\gamma(z)\to 0$ as $z\to \infty$.  Moreover, 
$$\widetilde{w}(z):=v(z)-u(z)\in E^{vert}_*(f_0, U),$$ since  
$$\gamma(f_0(z))-f'_0(z)\gamma(z)=\widetilde{w}(z)$$
holds for $z\in U\setminus K_*(f_0)$. 

{\bf Step 5.} By Lemma~\ref{lem:verticalgamma}, $\widetilde{w}$ extends to a holomorphic function $w\in E^{vert}_*(f_0, W)$. Thus $u:=v-w\in \mathcal{B}_{W}$. Since $u=\widetilde{u}$ on $U$,  $u\in E^h(f_0,W)$. 
\end{proof}
For $f_0\in \mathcal{Q}_{W_0}$ and $K_*(f_0)\subset W\Subset W_0$, let $P_{f_0}^h:\mathcal{B}_{W}\to E^h(f_0,W)$ and $P_{f_0}^{vert}:\mathcal{B}_{W}\to E^{vert}_*(f_0,W)$ denote the projections associated to the direct sum decomposition given by the lemma above.  

\begin{proof}[Proof of Theorem~\ref{thm:codimEh0}] Assume first that $W\Subset W_0$. By Lemma~\ref{lem:vertcomp} (applying to $f_n=f_0$), the unit ball of $E^{vert}_*(f_0,W)$ is compact. Thus $$D(f_0, W):=\text{dim}(E^{vert}_*(f_0,W))<\infty.$$

By Lemma~\ref{lem:verticalgamma}, each $v\in E^{vert}_*(f_0,W)$ extends uniquely to a holomorphic function on $W_0$. Thus $D(f_0,W)$ does not depend on $W$. That is, if $W'$ is another open subset of $\C$ with $K_*(f_0)\subset W'\Subset W_0$, then $D(f_0,W)=D(f_0,W')$. We denote the common number by $D=D(f_0,W_0)$. 

In order to show that $E^h(f_0,W)$ is closed and (\ref{eqn:decomp}) holds as Banach spaces, it suffices to show that the projection $P^{vert}_{f_0}:\mathcal{B}_{W}\to E^{vert}_*(f_0,W)$ is continuous. To see this, we review the construction in  
the proof of Lemma~\ref{lem:Lyudecomp}. In step 1, we choose the GPL representative $f_0:U\to V$ such that $U\Subset W$. In Step 2, we can choose $\alpha$ such tat  $\|\bar{\partial}\alpha\|_{L^\infty(\C\setminus U)}$ is bounded from above by $C\|v\|_U\le C\|v\|_{W}$, where $C$ is a constant. The same upper bound holds for the extension of $\alpha$ to $\C\setminus K_*(f_0)$ in Step 3. In Step 4, we have  $\|\bar{\partial}\beta\|_\infty\le C\|v\|_{W}$ and hence $\|\widetilde{w}\|_{U}\le C'\|v\|_{W}$. 
For $w$ constructed in Step 5, applying Lemma~\ref{lem:Lyutech} (to $X=U$, $Y=W$ and $f=f_0$), we have $\|w\|_{W}=O(\|\widetilde{w}\|_{U})=O(\|v\|_{W})$. As $w=P^{vert}_{f_0}(v)$, this proves that $P^{vert}_{f_0}$ is continuous.

It remains to show that $E^h(f,W_0)$ is closed in $\mathcal{B}_{W_0}$ with codimension $D$. Take an open set $W$ with $K_*(f)\subset W\Subset W_0$ such that $W$ is a finite union of Jordan disks with pairwise disjoint closure. By Mergelyan's theorem, the natural inclusion $\iota: \mathcal{B}_{W_0}\to \mathcal{B}_{W}$ has dense image. Let $\psi=P^{vert}_{f_0}\circ \iota$. Then $\psi^{-1}(0)=E^h(f_0,W_0)$. Since $\psi$ is a bounded linear surjection, it follows that the codimension of $E^h(f_0,W_0)$ in $\mathcal{B}_{W_0}$ is equal to $D(f_0,W)=D$. 
\end{proof}

\begin{proof}[Proof of Lemma~\ref{lem:dense}] Let $u\in E^h(f_0, \widetilde{W})$. By Mergelyan's Theorem, there exists a sequence $u_n$ of polynomials such that $\|u_n-u\|_{\widetilde{W}}\to 0$. Let $u_n=v_n+w_n$ with $v_n\in E^h(f_0,W)$ and $w_n\in E^{vert}_*(f_0,W)$. Then 
$$\|v_n-u+w_n\|_{\widetilde{W}}\to 0.$$
Since $v_n-u\in E^h(f_0, \widetilde{W})$ and $w_n\in E^{vert}_*(f_0,\widetilde{W})$, this implies that $\|v_n-v\|_{\widetilde{W}}\to 0$ since $P^h_{f_0}:\mathcal{B}_{\widetilde{W}}\to E^h(f_0,\widetilde{W})$ is continuous.
\end{proof}

\subsection{Continuity of the spaces of vertical vector fields}
In this subsection, we study dimension and continuity of vertical spaces.

\begin{proposition}\label{prop:lowerbd} If $f_n\to f_0$ in $\mathcal{Q}_{W_0}$ and $K_*(f_0)\subset W\Subset W_0$, then
  $$\limsup_{n\to\infty}\text{dim}(E^{vert}_*(f_n, W))\le \text{dim}(E^{vert}_* (f_0,W)).$$ 
\end{proposition}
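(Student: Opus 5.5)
The plan is to combine the compactness of Lemma~\ref{lem:vertcomp} with a standard linear-algebra argument on nearly orthonormal bases. Let $D_0=\dim E^{vert}_*(f_0,W)$, which is finite by Theorem~\ref{thm:codimEh0}. We argue by contradiction. Passing to a subsequence, assume $\dim E^{vert}_*(f_n,W)\ge D_0+1$ for all $n$. Each space is finite-dimensional, again by Theorem~\ref{thm:codimEh0} applied to $f_n$, since $K_*(f_n)\subset W$ for large $n$ by Lemma~\ref{lem:K*cont}.

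Fix $n$ and choose a $(D_0+1)$-dimensional subspace $F_n\subset E^{vert}_*(f_n,W)$. By the Auerbach lemma (or simply by inductive use of Riesz's lemma), $F_n$ has a basis $v_n^{(1)},\dots,v_n^{(D_0+1)}$ with $\|v_n^{(i)}\|_W=1$ and uniformly independent coefficients. Concretely, we want
\[
\Big\|\sum_i c_i v_n^{(i)}\Big\|_W\ge \delta\max_i|c_i|
\]
for a constant $\delta>0$ independent of $n$; Auerbach's lemma gives $\delta=1$. Alternatively, we can choose $v_n^{(i)}$ inductively with $\mathrm{dist}(v_n^{(i)},\mathrm{span}(v_n^{(1)},\dots,v_n^{(i-1)}))=1$, which also yields such a uniform $\delta$ depending only on $D_0$.

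Next, apply Lemma~\ref{lem:vertcomp} successively to each of the $D_0+1$ sequences $(v_n^{(i)})_n$ and pass to a common subsequence. Along it, $v_n^{(i)}\to v^{(i)}$ in $\mathcal{B}_W$ with $v^{(i)}\in E^{vert}_*(f_0,W)$. The uniform lower bound then passes to the limit for any coefficient vector $c$:
\[
\Big\|\sum_i c_iv^{(i)}\Big\|_W=\lim_n\Big\|\sum_i c_iv_n^{(i)}\Big\|_W\ge\delta\max|c_i|.
\]
Hence $v^{(1)},\dots,v^{(D_0+1)}$ are linearly independent in $E^{vert}_*(f_0,W)$, contradicting $\dim E^{vert}_*(f_0,W)=D_0$.

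The main point requiring care is the uniform independence constant $\delta$. Mere linear independence of each basis is not preserved under limits, so normalizing via Auerbach's lemma (or the Riesz distance construction) is essential. One should also check that Lemma~\ref{lem:vertcomp} indeed applies: it requires $f_n\to f_0$ in $\mathcal{Q}_{W_0}$ with $K_*(f_0)\subset W\Subset W_0$ and $\|v_n\|_W\le1$, and all of these hold here.
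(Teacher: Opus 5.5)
Your proposal is correct and follows essentially the same route as the paper: the paper also picks unit vectors $u_i^n\in E^{vert}_*(f_n,W)$ at distance $1$ from the span of the previous ones (your Riesz alternative), extracts convergent subsequences via Lemma~\ref{lem:vertcomp}, and notes that this distance inequality survives in the limit, which forces linear independence in $E^{vert}_*(f_0,W)$. The only differences are cosmetic: the paper argues directly for every $D\le\limsup_n \dim E^{vert}_*(f_n,W)$ rather than by contradiction, and it passes the distance inequality itself to the limit instead of using an explicit constant $\delta$.
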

\begin{proof} 
Let $D$ be a nonnegative integer such that $D\le \limsup_{n\to\infty} \text{dim}(E^{vert}_*(f_n,W))$. Our goal is to show that $\text{dim}(E^{ver}_*(f_0,W))\ge D$. 
Without loss of generality, we may assume that $\text{dim}(E^{vert}_*(f_n, W))\ge D$ for all $n$.
For each $n$, let us choose $u_1^n, u_2^n,\cdots, u_D^n\in E^{vert}_*(f_n, W)$ such that
$\|u_i^n\|_{W}=1$, and such that for each $2\le i\le D$, 
$$\inf_{a_1, a_2, \cdots, a_{i-1}\in \mathbb{C}}\left\|u_i^n-\sum_{j=1}^{i-1} a_j u_j^n\right\|_{W}\ge 1.$$    
By Lemma~\ref{lem:vertcomp}, after passing to a subsequence, we may assume that for each $i$, $u_i^n\to u_i$ in $E^{vert}_*(f_0,W)$. Then for each $a_1,a_2,\ldots, a_{i-1}\in \mathbb{C}$,
$$\left\|u_i-\sum_{j=1}^{i-1} a_j u_j\right\|_{W}\ge 1.$$  
Thus $u_1, u_2, \dots, u_D$ are linearly independent in $\mathcal{B}_{W}$, hence $\text{dim}(E^{vert}_*(f_0,W))\ge D$. 
\end{proof}

\begin{proposition}\label{prop:upperbd}  If $f_n\to f_0$ in $\mathcal{Q}_{W_0}$, $K_*(f_0)\subset W\Subset W_0$, and $d(j_{W_0} (f_n))=d(j_{W_0} (f_0))$ for all $n$ large, then  for any $v\in E^{vert}_*(f_0, W)$, $$P_{f_n}^{vert}(v)\to v \text{ in }\mathcal{B}_{W}.$$
In particular,
$$\liminf_{n\to\infty} \text{dim}(E^{vert}_*(f_n,W))\ge \text{dim}(E^{vert}_*(f_0,W)).$$
\end{proposition}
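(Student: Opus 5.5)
The plan is to build the $f_n$-decomposition of $v$ by hand, imitating Steps 1--5 in the proof of Lemma~\ref{lem:Lyudecomp}, and to take as the auxiliary vector field a perturbation of the function $\gamma$ belonging to $v$. Fix $v\in E^{vert}_*(f_0,W)$, and let $\gamma:\C\setminus K_*(f_0)\to\C$ be holomorphic with $\gamma(z)\to0$ at $\infty$ and $v=\gamma\circ f_0-f_0'\gamma$ on $W\setminus K_*(f_0)$. I want a splitting $v=u_n+w_n$ with $u_n\in E^h(f_n,W)$, $w_n\in E^{vert}_*(f_n,W)$ and $\|w_n-v\|_W\to0$. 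Then $w_n=P^{vert}_{f_n}(v)$ by the uniqueness in Lemma~\ref{lem:Lyudecomp}.

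\textbf{Setup and boundary data.} Fix a GPL representative $f_0:U\to V$ as in Lemma~\ref{lem:smallglp}, with $U\Subset V\Subset W$ and no critical points in $\overline U\setminus K_*(f_0)$. Take the equipped holomorphic motion $h_{f}$ of Lemma~\ref{lem:persistglp}, normalized by $h_f|_{\C\setminus V}=\mathrm{id}$, and write $U_n=h_{f_n}(U)$. Since $h_{f_n}\to\mathrm{id}$ with dilatation tending to $0$, I define $\alpha_n$ on $\C\setminus U_n$ as follows: $\alpha_n=\gamma$ outside $V$, $\alpha_n$ is given on $\partial U_n$ by the recursion \eqref{eqn:alpharec} with $f_0$ replaced by $f_n$, and $\alpha_n$ is interpolated in $V\setminus U_n$ as $\gamma$ plus a correction. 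The exact identity $\gamma=(\gamma\circ f_0-v)/f_0'$ on $\partial U$ shows that the boundary defect $\alpha_n-\gamma$ on $\partial U_n$ is $o(1)$, because $f_n\to f_0$ in $C^1$ near $\partial U$ and $\gamma$ is holomorphic there. Hence the correction can be chosen with $\|\bar\partial\alpha_n\|_{L^\infty(\C\setminus U_n)}\to0$.

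\textbf{Pulling back.} Next I extend $\alpha_n$ to $\C\setminus K_*(f_n)$ by the recursion, pulling back level by level through $f_n$. This is where the hypothesis $d(j_{W_0}f_n)=d(j_{W_0}f_0)$ is used. Critical points of $f_n$ near $\overline U$ come from those of $f_0$ in $K_*(f_0)$, and equal criticality forces all of them to lie in $K_*(f_n)$. So $f_n$ is unbranched on $U_n\setminus K_*(f_n)$, and the pullback is well defined and qc there. Since $f_n$ is conformal, $\|\bar\partial\alpha_n\|_{L^\infty(\C\setminus K_*(f_n))}=\|\bar\partial\alpha_n\|_{L^\infty(\C\setminus U_n)}\to0$.

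\textbf{Splitting.} Let $\mu_n=\bar\partial\alpha_n$ off $K_*(f_n)$, set $\mu_n=0$ on $K_*(f_n)$, and solve $\bar\partial\beta_n=\mu_n$ with $\beta_n\to0$ at $\infty$. The $\mu_n$ are supported in a fixed compact set and $\|\mu_n\|_\infty\to0$, so $\beta_n\to0$ uniformly on $\C$. Put $\gamma_n=\alpha_n-\beta_n$, which is holomorphic on $\C\setminus K_*(f_n)$, and let $u_n=\beta_n\circ f_n-f_n'\beta_n$, which is horizontal for $f_n$. Define $w_n=v-u_n$ on $U_n$; it equals $\gamma_n\circ f_n-f_n'\gamma_n$ there, so it is vertical, and Lemma~\ref{lem:verticalgamma} extends it holomorphically to $W$. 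Then $v-w_n\in E^h(f_n,W)$.

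\textbf{Convergence and the main obstacle.} It remains to prove $\gamma_n\to\gamma$ uniformly on compact subsets of $\C\setminus K_*(f_0)$, in particular on $\partial W\cup f_n(\partial W)$. Given this, $w_n\to v$ uniformly on $\partial W$, and the maximum principle gives $\|w_n-v\|_W\to0$. Since $\beta_n\to0$, the issue is $\alpha_n-\gamma$ on a compact set $K'$ away from $K_*(f_0)$. Such a $K'$ meets only boundedly many levels $V^k_{f_n}\setminus V^{k+1}_{f_n}$, uniformly in $n$. At each level the recursion propagates the difference by
\begin{equation*}
(\alpha_n-\gamma)(z)=\frac{(\alpha_n-\gamma)(f_n z)+(\gamma\circ f_n-\gamma\circ f_0)(z)-(f_n'-f_0')(z)\gamma(z)}{f_n'(z)},
\end{equation*}
and the extra terms are $o(1)$ on compact sets away from $K_*(f_0)$. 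Finitely many steps therefore give $o(1)$, and I do not even need the uniform expansion from Lemma~\ref{lem:expansion}. I expect this bookkeeping, together with making the boundary correction in the first step genuinely small in $L^\infty$ for $\bar\partial$, to be the main technical obstacle.

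\textbf{The dimension bound.} Choose a basis $v_1,\dots,v_D$ of $E^{vert}_*(f_0,W)$. Then $P^{vert}_{f_n}v_i\to v_i$, so these images are linearly independent for large $n$. This gives $\liminf_n\dim E^{vert}_*(f_n,W)\ge D$.
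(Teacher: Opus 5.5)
Your proof is correct. Your construction of $\alpha_n,\beta_n,u_n,w_n$ is essentially the paper's Steps 1--5; you depart from the paper only in proving $w_n\to v$.

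\emph{The paper's route.} It uses only that $\|\widetilde u_n\|_{U_n}\to0$, which is immediate from $\beta_n\to0$ uniformly. Hence $w_n$ is bounded on a fixed neighborhood $X$ of $K_*(f_0)$. Lemma~\ref{lem:Lyutech} upgrades this to a uniform bound on $W$, and Lemma~\ref{lem:vertcomp} gives subsequential limits in $E^{vert}_*(f_0,W)$. Each such limit equals $v$ on $X$, hence on $W$.

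\emph{Your route.} You show directly that $\gamma_n=\alpha_n-\beta_n\to\gamma$ locally uniformly on $\C\setminus K_*(f_0)$. You do this by propagating $\alpha_n-\gamma$ through the boundedly many pull-back levels that a compact set meets. You then evaluate $w_n-v$ on $\partial W$ via the two functional equations and apply the maximum principle.

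\emph{Comparison.} Your argument is self-contained and avoids the expansion of Lemma~\ref{lem:expansion}, which the paper uses through Lemma~\ref{lem:Lyutech}. It also yields convergence of the $\gamma_n$ as a by-product. The paper's argument is shorter, because the compactness machinery is already in place and no level-by-level bookkeeping is needed.

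When you write yours up, make explicit three facts it quietly relies on:
\begin{itemize}
\item For large $n$, the levels $V^k_{f_n}\setminus V^{k+1}_{f_n}$, $k\le k_0$, lie in a fixed compact subset of $\C\setminus K_*(f_0)$. There $|f_n'|$ is bounded below, because $f_0$ has no critical points in $\overline U\setminus K_*(f_0)$. You need this since you divide by $f_n'$ at each step.
\item The correction on $V\setminus U_n$ must be small in sup norm, not only in $\bar\partial$. A fixed cutoff times the holomorphic boundary defect gives both.
\item $f_0(\partial W)\cap K_*(f_0)=\emptyset$. This holds because a point $z\in W_0$ with $f_0(z)\in K_*(f_0)$ itself lies in $K_*(f_0)$.
\end{itemize}
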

\begin{proof} Consider $v\in E^{vert}_*(f_0,W)$ and we review the process in the proof of Lemma~\ref{lem:Lyudecomp} to construct $P^{vert}_{f_n}v$.  Let
$\gamma:\mathbb{C} \setminus K_*(f_0)\to \mathbb{C}$ be holomorphic such that $v(z)=\gamma(f_0(z))- f'_0(z)\gamma(z)$ holds for all $z\in W\setminus K_*(f_0)$ and such that $\gamma(z)\to 0$ 
as $z\to\infty$.

{\bf Step 1.} Let $f_0: U\to V$ be a GPL representative of $j_{W_0}f_0$ with $V\Subset W$ such that $f$ has no critical point in $\overline{U}\setminus K_*(f_0)$. By Lemma~\ref{lem:persistglp}, there is a holomorphic motion $h_f$ of $\C$ over $(\mathcal{V},f_0)$ for some neighborhood $\mathcal{V}$ in $\mathcal{B}_{W_0}$ such that for all $f\in \mathcal{V}$, $h_f|_{\C\setminus V}=id$, $f:h_f(U)\to V$ is a GPL representative of $j_{W_0}f$ and such that $f\circ h_f=f_0$ holds on $\partial U$. For $n$ large enough, $f_n\in \mathcal{V}$. Let $U_n=h_{f_n}(U)$. Since $d(j_{W_0} f_n)=d(j_{W_0} f)$, $f_n$ has no critical point in $\overline{U}_n\setminus K_*(f_n)$.

{\bf Step 2.} We construct a smooth function $\alpha_n: \mathbb{C}\setminus U_n\to \mathbb{C}$ with the following properties:
\begin{itemize}
\item For each $z\in \mathbb{C}\setminus V$, $\alpha_n(z)=\gamma(z)$; 
\item For $z\in \partial U_n$, $\alpha_n(z)=(\gamma(f_n(z))-v(z))/f_n'(z)$;
\item $\|\bar{\partial} \alpha_n\|_{L^\infty(\C\setminus U_n)}\to 0$ as $n\to\infty$.
\end{itemize}


The construction of $\alpha_n|_{V\setminus U_n}$ goes as follows. We  first choose for each component $\ell$ of $\partial V$ a small annular  neighborhood $B(\ell)$. Fix $\psi_\ell\in C_0^\infty(\C)$ such that $\psi_\ell(z)=1$ for $z$ in a small neighborhood of $\ell$ and $\text{supp} (\psi_\ell)\subset B(\ell)$. 
For each component $\sigma$ of $\partial U_n$, let $A_n(\sigma)$ be the annular neighborhood of $\sigma$ which is mapped onto the corresponding annular neighborhood of $f_n(\sigma)$. For $n$ large enough, the maps $f_n|_{A_n(\sigma)}$ is a conformal map onto its image and close to $f_0$ up to the first derivative. The holomorphic function $\gamma$ is well-defined in $A_n(\sigma)$. For $z\in A_n(\sigma)$, 
\begin{align*}
Q_n(z):=& -\gamma(z)+\left(\frac{\gamma(f_n(z))-v(z)}{f_n'(z)}\right)\\
=& -\left(\frac{\gamma(f_0(z))-v(z)}{f'_0(z)}\right)+
\left(\frac{\gamma(f_n(z))-v(z)}{f_n'(z)}\right)
\end{align*}
is bounded from above by $\varepsilon_n \|v\|_{W_0}$, up to the first derivative, where $\varepsilon_n\to 0$ as $n\to\infty$. Define $q_n(z)=Q_n(z)\psi_{f_n(\sigma)} (f_n(z))$. Then $q_n(z)=0$ on $\partial A_n(\sigma)$ and $q_n(z)=Q_n(z)$ for $z\in \sigma$. We define $\alpha_n:V\setminus U_n\to \mathbb{C}$ such that
\begin{itemize}
\item $\alpha_n(z)=q_n(z)+\gamma(z)$ for $z\in A_n(\sigma)\setminus U_n$, where $\sigma$ runs over all components of $U_n$; 
\item $\alpha_n(z)=\gamma(z)$ for $z\in V\setminus\bigcup_{\sigma} A_n(\sigma)$. 
\end{itemize}

{\bf Step 3.} Since $f_n$ has no critical point in $U_n\setminus K_*(f_n)$, we may extend $\alpha_n$ to $\C\setminus K_*(f_n)$ such that 
$$v(z)=\alpha_n(f_n(z))-f_n'(z) \alpha_n(z)$$
holds for all $z\in U_n\setminus K_*(f_n)$.
The vector field $\alpha_n(z)/dz$ is qc in $\C\setminus K_*(f_n)$ and 
\begin{equation}\label{eqn:estalphan}
\|\bar{\partial}\alpha_n(z)\|_{L^\infty (\C\setminus K_*(f_n))}=\|\bar{\partial} \alpha_n(z)\|_{L^\infty (V\setminus U_n)}\le C\varepsilon_n \|v\|_{W},
\end{equation}
where $C>0$ is a constant.

{\bf Step 4.} Let $\beta_n(z)/dz$ be a qc vector field such that
\[ \bar{\partial} \beta_n(z)=
\begin{cases}
0, & z\in K_*(f_n)\\
\bar{\partial} \alpha_n(z),  & z\in \C \setminus K_*(f_n)\\
\end{cases}
\]
and $\beta_n(z)=O(1/z)$ as $z\to\infty$. Since $f_n^*(\bar{\partial} \beta_n|_V)=\bar{\partial} \beta_n|_{U_n}$,
$$\widetilde{u}_n(z):=\beta_n\circ f_n(z)-f_n'(z) \beta_n(z)$$
belongs to $E^h(f_n, U_n)$.
Moreover, 
$$\widetilde{w}_n(z):=v(z)-\widetilde{u}_n(z)\in E^{vert}_*(f_n, U_n),$$
since
\[\widetilde{w}_n(z)=\delta_n\circ f_n(z) - f_n'(z)\delta_n(z)
\]
for $z\in U_n\setminus K_*(f_n)$, where 
$\delta_n:=\alpha_n-\beta_n$ is conformal in $\mathbb{C}\setminus K_*(f_n)$ and $\delta_n(z)=O(1/z)$ as $z\to\infty$. 

{\bf Step 5.} $\widetilde{w}_n$ extends to a function $w_n$ in $E^{vert}_*(f_n,W_0)$, which is equal to $P^{vert}_{f_n}v$, i.e.,   
$$P^{vert}_{f_n}v|_{U_n}=\widetilde{w}_n.$$

Now let us show that $w_n\to v$ in $\mathcal{B}_{W}$. Note that $\|\widetilde{u}_n\|_{U_n}\to 0$, so $\|\widetilde{w}_n\|_{U_n}\le 2$ for all $n$ large. For $n$ large enough, $U_n\Supset X:=f^{-2}(V)\Supset K_*(f_0)$, so $\|w_n\|_{W}=O(\|w_n\|_{X})=O(1)$ is uniformly bounded by Lemma~\ref{lem:Lyutech} (taking $Y=W$). 
By Lemma~\ref{lem:vertcomp}, each subsequence of $w_n$ has a convergent subsequence in  $\mathcal{B}_W$. Thus it suffices to show the limit $w$ of any convergent subsequence $\{w_{n_k}\}$ of $\{w_n\}$ is equal to $v$. Indeed, for $z\in X$,
$$v(z)-w(z)=\lim_{k\to\infty} (v(z)-\widetilde{w}_{n_k}(z))=\lim_{k\to\infty} \widetilde{u}_{n_k}(z)=0.$$


We have proved the former statement of the lemma. The latter follows immediately. 
\end{proof}

\begin{proof}[Proof of Theorem~\ref{thm:continuity}]
Take an open subset $W$ of $\C$ with $K_*(f_0)\subset W\Subset W_0$. By Propositions~\ref{prop:lowerbd} and~\ref{prop:upperbd}, it follows that $\text{dim}(E^{vert}_*(f_n,W))=\text{dim}(E^{vert}_*(f_0,W))$ for all $n$ large enough. 

By Proposition~\ref{prop:upperbd}, $P^{vert}_{f_n}(v)\to v$ for each $v\in E^{vert}_*(f,W_0)$. Together with the dimension equality which has just been proved, it follows that $E^{vert}_*(f_n,W_0)$ converges to $E^{vert}_*(f,W_0)$.
\end{proof}

\begin{proof}[Proof of Corollary~\ref{cor:Dfgerm}] 
By assumption, there is 
a qc map $h: \mathbb{C}\to\mathbb{C}$ such that $h\circ f=\widetilde{f}\circ h$ holds in a neighborhood $U_0$ of near $K(f)$. We may choose $U_0$ so that $f_0: U_0\to V_0=f(U_0)$ is a GPL which is equivalent to $f:U\to V$.  Let $\mu$ be the Beltrami differential associated to $h$, 
and let $h_t$ be the solution of the Beltrami equation $\overline{\partial} h_t(z)=t\mu \partial h_t(z)$ with $h_0=id, h_1=h$. Define $f_t=h_t\circ f\circ h_t^{-1}: U_t\to V_t$, where $U_t=h_t(U), V_t=h_t(V)$.   

It suffices to show that $t\mapsto \text{dim}(E^{vert}_*(f_t, U_t))$ is continuous in $t\in [0,1]$ (which then implies that it is globally constant). 
Fix $t_0 \in [0,1]$. Let $Y$ be an open set such that $K(f_{t_0})\subset Y\Subset U_{t_0}$. Then in a small neighborhood $T$ of $t_0$ in $[0,1]$, $f_t$ represents a GPL germ in $j_Y\mathcal{Q}_Y$ and $d(j_Y f_t)=d(j_Y f_{t_0})$. For $t\in T$, $t\mapsto f_t|_Y$ is continuous in $\mathcal{Q}_Y$. By Theorem~\ref{thm:continuity}, $t\mapsto \text{dim}(E^{vert}_*(f_t,U_t))$ is continuous at $t_0$.
\end{proof}
\subsection{A counterexample}\label{subsec:counterex}
We give an example of $f\in \mathcal{Q}_{W_0}$ with $D(f,W_0)=1$ and $E^{vert}_*(f,W_0)=0$. Let $W_0=\{z\in \C:|z|<4\}$ and $W=\{z\in \C: |z|<2\}$. Let $f_t(z)=z^2+t \sqrt{4-z}$, $z\in W_0$. 
\begin{proposition}\label{prop:counterex}
When $t\not=0$ and $|t|$ is small enough, $E^{vert}_*(f_t, W_0)=0$. 
\end{proposition}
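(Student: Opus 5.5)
The plan is to reduce the statement to a single linear condition at the boundary point $z=4$, and then check that this condition fails for small $t\neq0$ by comparing with $f_0(z)=z^2$.

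\emph{Setup.} Write $f_t'(z)=2z-\frac{t}{2\sqrt{4-z}}$. This derivative blows up at $z=4\in\partial W_0$, while $f_t(4)=16$ lies far from $K_*(f_t)$. For small $|t|$ the set $K_*(f_t)$ is close to the closed unit disk and the critical point near $0$ stays in it, so $d(j_{W_0}f_t)=1=d(j_{W_0}f_0)$. Fix $W=\{|z|<2\}\Subset W_0$.

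\emph{Reduction to the value $\gamma(4)$.} Suppose $v\in E^{vert}_*(f_t,W_0)$, with associated $\gamma$. Then $v|_W\in E^{vert}_*(f_t,W)$, and $\gamma$ is uniquely determined by $v$. Indeed, if $\delta$ is the difference of two admissible $\gamma$'s, then $\delta\circ f_t=f_t'\delta$. The expansion argument in the proof of Lemma~\ref{lem:Lyudecomp}, based on Lemma~\ref{lem:expansion}, shows that $\delta\to0$ at $K_*(f_t)$. Hence $\delta$ extends to an entire function vanishing at $\infty$, so $\delta\equiv0$. Now $\gamma$ is holomorphic near $4$ and $\gamma\circ f_t$ is continuous near $4$. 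Since $v$ is continuous on $\overline{W_0}$, the product $f_t'\gamma=\gamma\circ f_t-v$ must stay bounded as $z\to4$, and this forces $\gamma(4)=0$. Conversely, by Lemma~\ref{lem:verticalgamma} every $v\in E^{vert}_*(f_t,W)$ extends holomorphically to $W_0$ by the formula $\gamma\circ f_t-f_t'\gamma$. This extension is continuous up to $z=4$ exactly when $\gamma(4)=0$, because then $f_t'\gamma=O(|z-4|^{1/2})$. Away from $z=4$ continuity up to $\partial W_0$ is automatic. So $E^{vert}_*(f_t,W_0)$ is identified with the kernel of the linear functional $v\mapsto\gamma_v(4)$ on $E^{vert}_*(f_t,W)$. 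It therefore suffices to show that this space is one-dimensional and that its generator has $\gamma(4)\neq0$.

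\emph{Dimension and the model map.} By Lyubich's codimension-one result for quadratic-like maps, stated in the paper as $D(f_0,W_0)=1$, we have $\dim E^{vert}_*(f_0,W)=1$. By Theorem~\ref{thm:continuity}, applicable since the criticality is constant, $\dim E^{vert}_*(f_t,W)=1$ for small $|t|$. For $f_0$ the vertical direction is spanned by $v\equiv1$, with
\[
\gamma_0(z)=-\sum_{n\ge1}\frac{1}{2^n z^{2^n-1}}.
\]
One checks directly that $\gamma_0(z^2)-2z\gamma_0(z)=1$ and $\gamma_0(z)\to0$ as $z\to\infty$. Every term of the series at $z=4$ is negative, so $\gamma_0(4)\neq0$.

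\emph{Main obstacle: passing to $t\neq0$.} Choose generators $v_t$ of $E^{vert}_*(f_t,W)$ with $\|v_t\|_W=1$. By Lemma~\ref{lem:vertcomp}, along any sequence $t_n\to0$ a subsequence of $v_{t_n}$ converges to a unit vector of $E^{vert}_*(f_0,W)$, namely $c\cdot1$ with $|c|=1$. In that proof the $\gamma_{t_n}$ converge locally uniformly on $\C\setminus K_*(f_0)$, and by uniqueness the limit is $c\gamma_0$. Evaluating at $z=4$, which lies outside $K_*(f_0)$, gives $|\gamma_{t_n}(4)|\to|\gamma_0(4)|>0$. Hence $\gamma_{v_t}(4)\neq0$ for all small $t\neq0$, and so $E^{vert}_*(f_t,W_0)=0$.

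The delicate points I expect are:
\begin{itemize}
\item justifying the uniform bounds of Lemma~\ref{lem:Lyutech} near $\partial W_0$, where $f_t$ is not analytic, and I would work with $W\Subset W_0$ throughout;
\item the continuity argument at $z=4$, since $\sqrt{4-z}$ is not holomorphic across $\partial W_0$.
\end{itemize}
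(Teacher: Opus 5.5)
Your argument is correct and is essentially the paper's proof: normalize $\|v_n\|_W=1$, pass to the limit via Lemma~\ref{lem:vertcomp}, identify the limit as a unimodular constant with $\gamma=c\gamma_0$, use $\gamma_0(4)\neq0$ to get $\gamma_{t_n}(4)\neq0$, and contradict continuity at $z=4$, where $f_t'$ blows up. The only difference is your detour through $\dim E^{vert}_*(f_t,W)=1$ via Theorem~\ref{thm:continuity}, which the paper avoids by running the limit argument directly on putative nonzero elements of $E^{vert}_*(f_{t_n},W_0)$; your ``delicate points'' do not actually arise, since all the estimates are taken on $W\Subset W_0$.
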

\begin{proof}
Clearly, $f_0\in \mathcal{Q}_{W_0}$ and $K_*(f_0)=\mathbb{D}$. When $|t|$ is small, $f_t\in \mathcal{Q}_{W_0}$ and $K_*(f_t)\in W$. 
Let us assume for a contradiction that there is a sequence $t_n\to 0$ such that $E^{vert}_*(f_{t_n}, W_0)$ has a non-zero element $v_n$, $t_n\not=0$. Without loss of generality, assume that $\|v_n\|_{W}=1$. Let $\gamma_n:\C\setminus K_*(f_{t_n})\to \C$ be the holomorphic function with $\gamma_n(z)\to 0$ as $z\to \infty$ and such that 
\begin{equation}\label{eqn:gamman}
\gamma_n(f_{t_n}(z))-f_{t_n}'(z) \gamma_n(z)=v_n(z)
\end{equation}
for $z\in W_0\setminus K_*(f_{t_n})$. As in Lemma~\ref{lem:vertcomp}, after passing to a subsequence, we may assume that $v_n$ converges in $\mathcal{B}_{W}$ to an element $v\in E^{vert}_*(f_0, W)$ with $\|v\|_{W}=1$ and that $\gamma_n$ converges locally uniformly in $\C\setminus K_*(f_0)$ to a holomorphic function $\gamma:\C\setminus \overline{\mathbb{D}}\to \C$ with $\gamma(z)\to 0$ as $z\to \infty$ and such that 
\begin{equation}\label{eqn:vgamma}
v(z)=\gamma(f_0(z))-f_0'(z) \gamma(z)
\end{equation} for $z\in \C\setminus \overline{\mathbb{D}}$. 
Since $f_0(z)=z^2$ is holomorphic in $\C$, $v(z)$ extends to a holomorphic function in $\C$. Since $\gamma(z)=O(1/z)$ near infinity, $v(z)=O(1)$ near infinity. This means that $v=c$ is a constant. Hence $v\equiv 1$. Using (\ref{eqn:vgamma}), we obtain
$$\gamma(z)=-\sum_{n=1}^\infty ((f^n)'(z))^{-1}=-\sum_{n=1}^\infty (2^n z^{2^n-1})^{-1}.$$ 
In particular, $\gamma(4)\not=0$. Thus for $n$ large enough, $\gamma_n(4)\not=0$. 
As $\lim_{z\to 4}f_{t_n}'(z)=\infty$, by (\ref{eqn:gamman}), $\lim_{z\to 4} v_n(z)$ does not exist, a contradiction. 
\end{proof}

\section{The codimension of horizontal spaces}\label{sec:codim}
The goal of this section is to prove the following:
\begin{theorem}\label{thm:dimension}
For $f_0\in \mathcal{Q}_{W_0}$,  the horizontal space $E^h(f_0,W_0)$ has codimension $d(j_{W_0}f_0)$ in $\mathcal{B}_{W_0}$. 
\end{theorem}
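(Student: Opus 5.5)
By Theorem~\ref{thm:codimEh0}, the codimension of $E^h(f_0,W_0)$ equals $D(f_0,W_0)=\dim E^{vert}_*(f_0,W)$ for any $W$ with $K_*(f_0)\subset W\Subset W_0$. By Lemma~\ref{lem:verticalgamma}, this number depends only on the germ $j_{W_0}f_0$. By Corollary~\ref{cor:Dfgerm}, it is invariant under qc conjugacy of GPLs. The plan is therefore to move $f_0$, through qc conjugacies and deformations that keep the criticality fixed, to a model whose vertical space we can compute directly.

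\textbf{Step 1: straightening.} I would first prove the extension of Baranski's straightening result. Every GPL $f:U\to V$ is qc conjugate, near $K(f)$, to the restriction of a polynomial-like model. Concretely, I would take a polynomial $P$ of degree $\geq 2$ (or a GPL built from finitely many polynomials, one per component of $V$ meeting the critical orbits), whose critical points in $K(P)$ correspond, with their orders, to those of $f$ in $K(f)$. Any remaining critical points of $P$ should escape.

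The construction is the usual one. Use quasiconformal surgery outside $V$ to glue in $z\mapsto z^N$ dynamics at infinity. Add extra escaping critical points so that the covering near the boundary is completed to a branched covering of the sphere of the correct degree. Finally, apply the measurable Riemann mapping theorem.

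The step where Baranski needed an extra condition is the following: components of $U$ that map to the same component of $V$ must be realized simultaneously as preimages under a single global map. I would handle this by connecting the components with slits or tubes in $V\setminus\overline U$ and choosing the extension as a branched cover there. The critical points created by the tubes lie outside $K$ and escape, so $d$ is unchanged.

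With Corollary~\ref{cor:Dfgerm}, this reduces the theorem to the case where $f_0$ is a restricted polynomial $P:U\to V$ whose escaping critical points lie outside $\overline U$.

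\textbf{Step 2: identify $D$ for polynomials via Astorg's theorem.} For a polynomial $P$, a vertical vector $v=\gamma\circ P-P'\gamma$ with $\gamma$ holomorphic off $K(P)$ and vanishing at infinity is determined, up to horizontal vectors, by the Lyubich decomposition. I would then compare with the space of polynomials of the same degree. Horizontal directions correspond to infinitesimal qc deformations supported off $K$, that is, the tangent space of the hybrid class, or Teichmüller embedding. Astorg proves that the locus of polynomials hybrid (qc, with $\bar\partial=0$ on $K$) equivalent to $P$ is a submanifold whose codimension in the polynomial family equals the number of critical points in $K(P)$, counted with multiplicity.

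I would show that restriction from polynomial perturbations to $\mathcal{B}_W$ induces an isomorphism
\[
T_P\mathrm{Poly}\big/ \bigl(T_P\mathrm{Poly}\cap E^h\bigr)\;\cong\;\mathcal{B}_W/E^h(P,W).
\]
Surjectivity follows by approximating any $v$ by polynomials (Mergelyan) and using that $E^h$ is closed of finite codimension. Care is needed because the degree must be allowed to grow. To handle this, I would use the escaping critical points: perturbing them moves only the dynamics outside $K$, which is horizontal, so higher-degree directions add nothing. This yields $D(P)=d$.

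\textbf{Step 3: constancy along curves.} Where the model is not directly a polynomial, I would use Theorem~\ref{thm:continuity}. Along a path $f_t$ with $d(j f_t)$ constant, $\dim E^{vert}_*$ is locally constant, hence constant. I would connect $f_0$ to the straightened model by the Beltrami path $h_t$, exactly as in the proof of Corollary~\ref{cor:Dfgerm}, and then, inside the polynomial family, to a nearby generic or structurally stable point if convenient.

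\textbf{Main obstacle.} I expect Step 2 to be the main obstacle: matching Astorg's codimension count with the Banach-space codimension of $E^h$. In particular, one must show that every vertical direction in $\mathcal{B}_W$ is realized by a polynomial perturbation of bounded degree. I would first try to prove that the natural map from the finite-dimensional space of perturbations moving the critical values in $K$ to $E^{vert}_*$ is surjective, using the explicit $\gamma$ from the decomposition, and only then invoke Astorg for injectivity.
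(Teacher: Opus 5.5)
There is a genuine gap in Step 1. Step 2 uses Astorg's theorem, which concerns $\bar\partial\varphi=0$ on all of $K(P)$, so you need a conjugacy sending $K(f)$ \emph{onto} $K(P)$. Such a conjugacy does not exist for general GPLs. Take $V=Y_1\sqcup Y_2$ and $U=U_1\sqcup U_2$ with $U_i\Subset Y_i$, where $f|_{U_1}:U_1\to Y_1$ is quadratic-like with connected filled Julia set $K_1$ and $f|_{U_2}:U_2\to Y_1$ is conformal. Then $K(f)=K_1\cup(f|_{U_2})^{-1}(K_1)$ has exactly two components, whereas a polynomial filled Julia set is connected or has uncountably many components.

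Your tubes and slits in $V\setminus\overline U$ address only the Riemann--Hurwitz issue behind Baranski's extra hypothesis; the paper also handles that one by adding escaping critical points in $Y\setminus\overline U$. They cannot repair the fact that $f$ covers $Y_1$ three times and $Y_2$ not at all. Any global extension must place the preimages of $Y_2$ outside $U$, and those landing on $K(f)\cap Y_2$ stay bounded, so the filled Julia set grows. If you instead allow $\varphi(K(f))$ to be a proper invariant subset of $K(P)$, Step 2 computes the wrong spaces, because $E^h$ and $E^{vert}_*$ of the restriction are defined relative to the smaller set. Step 3 only propagates $\dim E^{vert}_*$ along paths of maps you already have (qc conjugates, or polynomials of fixed criticality), so it cannot help where no qc conjugacy to a polynomial exists.

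The missing idea is \emph{subordination}. Take a non-escaping representative and enlarge it to a balanced GPL $\widetilde f$ with the same $V$ by adding finitely many conformal branches disjoint from the critical orbits (Lemma~\ref{lem:subbal}). Then prove $\dim E^{vert}_*(f,U)=\dim E^{vert}_*(\widetilde f,\widetilde U)$ (Proposition~\ref{prop:subord}). This needs two ingredients absent from your plan:
\begin{enumerate}
\item $K(\widetilde f)\setminus\bigcup_n\widetilde f^{-n}(K(f))$ has zero area (Lemma~\ref{lem:subordJulia}), via Koebe distortion for the first-return map to $\widetilde U\setminus U$;
\item an infinitesimal pullback argument showing that a vector horizontal for $f$ is horizontal for $\widetilde f$.
\end{enumerate}
For balanced maps the surgery of Theorem~\ref{thm:straighten} works by adding $r_Y\ge 0$ simple escaping critical points with disjoint orbits. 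The resulting $P$ then has no critical relations in the basin of infinity. You need this too: otherwise Astorg's rank is the number of grand orbits of escaping critical points, not $d-1-d(f)$.

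Two further points about Step 2. First, the degree never grows: for a degree-$d$ polynomial every vertical vector is itself a polynomial of degree at most $d-2$ (Lemma~\ref{lem:Fvertup}). Second, you have the roles of the two inequalities reversed. Astorg's submanifold supplies enough horizontal polynomial directions, which gives the upper bound $\dim E^{vert}_*\le d(f)$. The reverse inequality comes from continuing $\gamma(z)=-\sum_{n\ge 1}v(P^{n-1}(z))/(P^n)'(z)$ from a neighborhood of $\infty$ to $\C\setminus K(P)$, which imposes one linear condition per escaping critical point.
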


The theorem was proved in \cite{L2} for a quadratic-like mapping.  For a GPL, the theorem was proved in~\cite{Smania} assuming that $f$ can be approximated by critically finite hyperbolic mappings. Whether a GPL can be approximated by hyperbolic mappings is a difficult problem, which is equivalent to the well-known Fatou conjecture (for polynomials).

Our proof of Theorem~\ref{thm:dimension} consists of three steps. First, we prove the theorem in \S~\ref{subsec:poly} in the case that $f_0$ is a restriction of a polynomial map with the same filled Julia set (and without critical relation in the basin of infinity), by applying a result in~\cite{Astorg}. By Corollary~\ref{cor:Dfgerm}, this implies the theorem in the case that $j_{W_0}f_0$ is qc conjugate to a restriction of polynomial. Unfortunately, this is not always the case. To deal with the difficulty, we introduce a notion {\em subordination} and show in \S~\ref{subsec:subord} that any GPL is subordinate to a {\em balanced } GPL so that their horizontal spaces have the same codimension. In \S~\ref{subsec:straighten}, we shall extend a straightening theorem in~\cite{Baranski} to show that a balanced GPL is hybrid equivalent to a polynomial and completes the proof of the theorem. 

\subsection{The polynomial case}\label{subsec:poly}

\begin{theorem}\label{thm:poly} Assume $f\in \mathcal{Q}_W$ extends to a polynomial $F:\mathbb{C}\to\mathbb{C}$ 
such that the following hold:
\begin{itemize}
\item $K_*(f)=K(F)$, where $K(F)$ is the filled Julia set of $F$, i.e. the set of $z\in \C$ with $\sup_{n=0}^\infty |F^n(z)|<\infty$;
\item $F$ has no critical relation in $\mathbb{C}\setminus K(F)$. That is, each critical point of $F$ in $\C\setminus K(F)$ is non-degenerate; and for critical points $c,c'$ of $F$ in $\C\setminus K(F)$ and non-negative integers $n, n'$, $F^n(c)=F^{n'}(c')$ implies that $n=n'$ and $c=c'$. 
\end{itemize}
Then 
$$\text{dim}(E^{vert}_*(f,W))=d(j_Wf),$$
where $d(j_W f)$ is defined in (\ref{eqn:criticality}). 
\end{theorem}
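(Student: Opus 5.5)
Throughout, $F$ is a polynomial of degree $N$, and we may normalize it to be monic and centered. The plan is to identify $E^{vert}_*(f,W)$ with the normal space, at $F$, to the set of polynomials qc conjugate to $F$ by a conjugacy that is conformal on $K(F)$. We then compute that normal space using Astorg's result~\cite{Astorg} on the Teichm\"uller embedding. By Theorem~\ref{thm:codimEh0}, $\dim E^{vert}_*(f,W)$ is the codimension $D$ of $E^h(f,W_0)$, and this does not depend on the domain. So it suffices to show that $E^h(f,W)\cap\mathcal{P}_{N}$ has codimension $d(j_Wf)$ in the tangent space $T_F\mathcal{P}_N\cong\C^{N-1}$ of monic centered polynomials. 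We also need that polynomial vector fields $v$ span $\mathcal{B}_W/E^h$. This holds after shrinking $W$ to a union of Jordan disks, by Mergelyan's theorem together with the continuity of the projection $P^{vert}_f$.

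\textbf{Step 1: tangent vectors to the hybrid-type leaf.} Let $\mathcal{M}(F)$ be the set of monic centered $G$ that are qc conjugate to $F$ on $\C$ by a conjugacy $\phi$ with $\bar\partial\phi=0$ a.e.\ on $K(F)$. By Astorg, under the hypothesis that there are no critical relations in the basin of infinity, $\mathcal{M}(F)$ is a complex submanifold near $F$. Its dimension equals the dimension of the Teichm\"uller space of $F$ restricted to the basin of $\infty$, which is $\#\{\text{escaping critical points}\}=(N-1)-d(j_Wf)$. Differentiating a holomorphic family $\phi_t\circ F=G_t\circ\phi_t$ gives $\dot G=\beta\circ F-F'\beta$ with $\beta=\dot\phi$ a qc vector field and $\bar\partial\beta=0$ on $K(F)$. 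Hence $T_F\mathcal{M}(F)\subset E^h(f,W)\cap T_F\mathcal{P}_N$.

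\textbf{Step 2: the converse inclusion.} Take a polynomial $v\in E^h(f,W)$, given by some $\beta$ defined near $K(F)$. Using the expansion of $F$ near $\partial$ of an escaping neighborhood, as in Step 3 of Lemma~\ref{lem:Lyudecomp}, we extend $\beta$ to a global qc vector field on $\C$ with $v=\beta\circ F-F'\beta$ everywhere. The extension goes outward using B\"ottcher coordinates and the relation $\beta(F(z))=v(z)+F'(z)\beta(z)$. It is compatible at escaping critical points precisely because of the absence of critical relations: at a simple critical point $c$ the relation forces $\beta(F(c))=v(c)$, and each escaping critical value can be prescribed independently. The field $\bar\partial\beta$ is then an infinitesimal $F$-invariant Beltrami differential supported off $K(F)$. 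Integrating $t\,\bar\partial\beta$ produces a curve in $\mathcal{M}(F)$ with derivative $v$, modulo infinitesimal affine conjugations, which are killed by the normalization. Therefore $E^h\cap T_F\mathcal{P}_N=T_F\mathcal{M}(F)$, of codimension $d(j_Wf)$ in $T_F\mathcal{P}_N$.

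\textbf{Step 3: passage to the Banach space.} It remains to show that the map $T_F\mathcal{P}_N/(E^h\cap T_F\mathcal{P}_N)\to\mathcal{B}_W/E^h(f,W)$ is onto. Given $v\in\mathcal{B}_W$, decompose $v=u+w$ with $w\in E^{vert}_*$ and write $w=\gamma\circ F-F'\gamma$, where $\gamma$ is holomorphic off $K(F)$ and $\gamma=O(1/z)$. We seek a polynomial $p$ of degree at most $N-1$ with $p-w\in E^h$, that is, $\gamma$ modified by a polynomial-growth holomorphic field extending to a qc field. We take $p=\widetilde\gamma\circ F-F'\widetilde\gamma$, where $\widetilde\gamma$ is a qc extension of $\gamma|_{\{|z|>R\}}$ across $K(F)$. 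Then $p-w\in E^h$, and $p$ is entire with $p=O(z^{N-1})$ at $\infty$, so $p$ is a polynomial of degree at most $N-1$. After absorbing affine terms, $p$ lies in $T_F\mathcal{P}_N+E^h$. This gives surjectivity, and hence $\dim E^{vert}_*=d(j_Wf)$.

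The main obstacle is Step 2: building the global extension of $\beta$ consistently through the escaping critical orbits and the Green's level sets. I would handle it by working in the Teichm\"uller space of the basin, matching Astorg's framework, rather than by hand.
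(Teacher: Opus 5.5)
\textbf{The gap is Step~2.} It carries the whole lower bound $\dim E^{vert}_*(f,W)\ge d(j_Wf)$, and you leave it open.

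As sketched it does not work. The relation $\beta(F(z))=v(z)+F'(z)\beta(z)$ cannot be used to extend $\beta$ \emph{outward}: $F$ is not injective, so a point $w$ away from $K(F)$ receives one candidate value of $\beta(w)$ from each of its preimages, and nothing forces them to agree. A workable construction goes inward, pulling back a solution defined near $\infty$. You would then have to (a) make it regular at each escaping critical point, i.e.\ force $\alpha(F(c))=v(c)$, and (b) glue it to $\beta$ near $K(F)$.

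For (a), the only freedom near $\infty$ is an $F$-invariant qc field. In B\"ottcher coordinates such a field has the form $\zeta\log|\zeta|\,g(\log\log|\zeta|)$ with $g$ of period $\log N$, since continuity kills any angular dependence. So the escaping critical values can \emph{not} in general be prescribed independently. The hypothesis excludes only exact orbit coincidences; if two escaping critical points have Green levels in ratio $N^k$, their constraints compete for the single value of $g$ at that level.

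In that case your equality $E^h\cap T_F\mathcal{P}_N=T_F\mathcal{M}(F)$ is false. Take $F(z)=z^3+az$ with $|a|$ large. Both critical points escape at the same rate with no critical relation, and the residue conditions below give $E^{vert}_*(f,W)=0$, so every direction in $T_F\mathcal{P}_3$ is horizontal. Yet a conjugacy preserves the fact that the two critical points have the same grand-orbit closure. So near $F$ the set $\mathcal{M}(F)$ lies in the proper real-analytic set $\{P:G_P(c_1)=G_P(c_2)\}$ and has dimension at most one.

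Astorg's theorem controls only the rank of $\mu\mapsto\Psi(\mu)$, i.e.\ the size of $T_F\mathcal{M}(F)$. It cannot tell you that a horizontal $v$ comes from a global invariant Beltrami differential. The same example shows that the count $\dim\mathcal{M}(F)=N-1-d(j_Wf)$ in your Step~1 needs that no two escaping critical points lie on a common leaf. This is the same appeal to Astorg the paper makes, so the caveat applies to its argument too; McMullen--Sullivan count foliated classes, not grand orbits.

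\textbf{How the paper gets the lower bound.} It needs no global integration and works with vertical vectors directly. The idea you are missing is Lemma~\ref{lem:Fvertup}. If $w=\gamma\circ F-F'\gamma$ with $\gamma$ holomorphic on $\C\setminus K(F)$ and $\gamma=O(1/z)$, this formula continues $w$ to an entire function of growth $O(z^{N-2})$. So $E^{vert}_*(f,W)$ consists of polynomials of degree at most $N-2$.

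This makes Step~3 immediate: take $p=w$. Your version of Step~3 is actually wrong. The function $\widetilde\gamma\circ F-F'\widetilde\gamma$, with $\widetilde\gamma$ merely qc across $K(F)$, is not holomorphic, and $\widetilde\gamma-\gamma$ is undefined on $K(F)$. So neither ``$p$ is entire'' nor ``$p-w\in E^h$'' follows.

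Conversely, let $v$ have degree at most $N-2$. The series $\gamma(z)=-\sum_{n\ge1}v(F^{n-1}(z))/(F^n)'(z)$ solves the equation near $\infty$. It continues through $\gamma=(\gamma\circ F-v)/F'$ to all of $\C\setminus K(F)$ provided $\sum_{n\ge0}v(F^n(c))/(F^n)'(F(c))=0$ at each of the $N-1-d(j_Wf)$ escaping critical points. Linear algebra then gives $\dim E^{vert}_*\ge d(j_Wf)$, with no Teichm\"uller theory and no condition on Green levels. The upper bound is exactly your Step~1.
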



The proof of the theorem is broken into the following three lemmas. Let $\mathcal{P}_d$ denote the set of monic centered polynomials of degree $d$, i.e. polynomials of the form 
$z^d+a_{d-2}z^{d-2}+\dots+a_1z+a_0$, with $a_0,a_1,\dots, a_{d-2}\in \C$.  
First of all, without loss of generality, we may assume that $F\in \mathcal{P}_d$.
Write $d(f)=d(j_Wf)$.
\begin{lemma}\label{lem:Fvertup} Each $v\in E^{vert}_*(f,W)$ is a restriction of a polynomial of degree at most $d-2$.  
\end{lemma}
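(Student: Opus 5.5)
Given $v\in E^{vert}_*(f,W)$, I will first extend $v$ to an entire function and then bound its growth at infinity. By definition there is a holomorphic $\gamma:\C\setminus K_*(f)\to\C$ with $\gamma(z)\to 0$ as $z\to\infty$ such that
$$v(z)=\gamma(F(z))-F'(z)\gamma(z),\qquad z\in W\setminus K_*(f),$$
where we use $f=F$ on $W$. Since $K_*(f)=K(F)$ is completely invariant under $F$, the right-hand side $\gamma\circ F-F'\gamma$ is defined and holomorphic on all of $\C\setminus K(F)$. It agrees with $v$ on $W\setminus K(F)$, which is a nonempty open set meeting each component of $W$.

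So I define $\widehat v$ to be $v$ on $W$ and $\gamma\circ F-F'\gamma$ on $\C\setminus K(F)$. The two definitions agree on the overlap $W\setminus K(F)$, because they agree near $\partial$-parts of each component of $W\setminus K(F)$ by the identity theorem. (The identity is given on all of $W\setminus K_*(f)$ anyway, so agreement is immediate.) Since $K(F)\subset W$, the open sets $W$ and $\C\setminus K(F)$ cover $\C$, and $\widehat v$ is therefore an entire function extending $v$.

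Next I estimate the growth at infinity. As $z\to\infty$ we have $\gamma(z)=O(1/z)$, because $\gamma$ is holomorphic near $\infty$ and vanishes there. Likewise $\gamma(F(z))=O(|z|^{-d})$, since $F(z)\to\infty$ and $F$ is monic of degree $d$. Finally $F'(z)=dz^{d-1}+O(|z|^{d-3})$, using that $F$ is centered. Hence
$$\widehat v(z)=-F'(z)\gamma(z)+O(|z|^{-d})=O(|z|^{d-2}).$$
By Liouville's theorem (Cauchy estimates), $\widehat v$ is a polynomial of degree at most $d-2$, and $v$ is its restriction to $W$.

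The only point needing care is that the extension is single-valued and well defined. This holds because $\gamma$ is a single function on $\C\setminus K(F)$ and the defining identity holds on all of $W\setminus K_*(f)$. I therefore see no real obstacle: the key input is the complete invariance of $K(F)$, and the expansion $\gamma(z)=c/z+O(z^{-2})$ even shows that the $z^{d-2}$ coefficient of $\widehat v$ equals $-dc$.
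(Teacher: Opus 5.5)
Your proposal is correct and follows essentially the same argument as the paper. You use the functional equation to extend $v$ holomorphically across $\C\setminus K(F)$, which is possible because $K(F)$ is completely invariant and $K(F)\subset W$. You then use $\gamma(z)=O(1/z)$ and $\gamma(F(z))=O(z^{-d})$ to get polynomial growth of degree at most $d-2$; your appeal to centeredness of $F$ is unnecessary but harmless.
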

\begin{proof}
By definition, there exists a holomorphic function $\gamma:\mathbb{C}\setminus K(F)\to \mathbb{C}$ such that $\gamma(z)\to 0$ as $z\to \infty$ and 
\begin{equation}\label{eqn:vF}
\gamma(F(z))-F'(z)\gamma(z)=v(z)
\end{equation}
for all $z\in W\setminus K(F)$. Since the left hand side of (\ref{eqn:vF}) defines a holomorphic function in $\C\setminus K(F)$, $v$ extends to a holomorphic function in $\C$ such that (\ref{eqn:vF}) remains true for all $z\in  \C\setminus K(F)$. As $z\to\infty$, $\gamma(z)=O(1/z)$, $\gamma(F(z))=O(z^{-d})$, thus $v(z)=o(z^{d-1})$. Thus $v(z)$ is a polynomial of degree at most $d-2$. 
\end{proof}

\begin{lemma} $\text{dim}(E^{vert}_*(f,W))\ge d(f)$.
\end{lemma}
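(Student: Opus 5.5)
We must show $\dim E^{vert}_*(f,W)\ge d(f)$. Equivalently, by the decomposition (\ref{eqn:decomp}) (after passing to some $W'\Subset W$ if needed, using that $D$ does not depend on $W$), it suffices to show that $E^h(f,W)$ has codimension at least $d(f)$. The plan is to exhibit $d(f)$ continuous linear functionals on $\mathcal{B}_W$ that vanish on $E^h(f,W)$ and are linearly independent. To build them we use the local behaviour of a horizontal vector $v=\beta\circ f-f'\beta$ at the critical points in $K(F)$.

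Let $c_1,\dots,c_k$ be the critical points of $F$ lying in $K(F)$, with orders $m_i=\text{ord}(f,c_i)$, so that $\sum m_i=d(f)$. The argument has three steps.

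\textbf{Step 1: horizontal vectors satisfy local constraints.} Fix $v\in E^h(f,W)$ with its qc vector field $\beta$. Near each $c_i$ the equation $v=\beta\circ f-f'\beta$ holds on a neighborhood of $K_*(f)$. Since $f'(c_i)=\cdots=f^{(m_i)}(c_i)=0$, evaluating at $c_i$ gives $v(c_i)=\beta(f(c_i))$, so $\beta(f(c_i))$ is determined by $v$. A naive count of jets would give further constraints coupling $\beta$ near $c_i$ and near $f(c_i)$. These constraints are not visible pointwise, however, because $\beta$ is only qc and may fail to be holomorphic near $K$ when $K$ has empty interior.

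\textbf{Step 2: the cleaner route through Astorg's result.} Because of the difficulty in Step 1, I would instead go through the polynomial family, as the paper announces. Consider the map $\Phi:\mathcal{P}_d\to\mathcal{B}_W$, $G\mapsto G|_W$. Its tangent image $T=\{p|_W:\deg p\le d-2\}$ is $(d-1)$-dimensional, and a tangent vector $p$ is horizontal exactly when $p=\beta\circ F-F'\beta$ with $\beta$ qc and holomorphic a.e.\ on $K(F)$. The key input is Astorg's theorem on the Teichm\"uller embedding. The hybrid class of $F$ in $\mathcal{P}_d$ (qc conjugacies conformal on $K(F)$) is locally a submanifold, and since there is no critical relation outside $K(F)$, its dimension equals the number of free escaping critical points counted with multiplicity, namely $d-1-d(f)$. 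Its tangent space $T^h$ is precisely $T\cap E^h(f,W)$. The inclusion $\supseteq$ comes from a qc deformation supported off $K$ giving a path of polynomials; the inclusion $\subseteq$ comes from integrating a horizontal polynomial direction to a qc deformation via a Beltrami coefficient supported off $K$ and applying Astorg's identification. Hence $\dim(T/(T\cap E^h))=d(f)$.

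\textbf{Step 3: conclude.} The quotient $\mathcal{B}_W/E^h(f,W)$ then contains the image of $T$, which has dimension $d(f)$. Therefore the codimension of $E^h(f,W)$ is at least $d(f)$, and by Theorem~\ref{thm:codimEh0} this yields $\dim E^{vert}_*(f,W)\ge d(f)$.

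The main obstacle is the identification in Step 2: showing that a polynomial direction $p$ lying in $E^h(f,W)$ is actually tangent to the hybrid class, so that Astorg's dimension count applies. For this I would use the $\beta$ attached to $p$ to build the Beltrami path $t\,\bar\partial\beta$, pull it back invariantly to get polynomials $F_t$, and check that $\frac{d}{dt}F_t|_{t=0}=p$ modulo the affine normalization.
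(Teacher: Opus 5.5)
\textbf{There is a genuine gap.} Step~3 is fine. The whole lower bound, however, rests on the inclusion $T\cap E^h(f,W)\subseteq T^h$ in Step~2. This says that every polynomial direction lying in $E^h$ is tangent to the \emph{global} hybrid class of $F$ in $\mathcal{P}_d$, and it is what gives $\dim(T\cap E^h)\le d-1-d(f)$.

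You flag this as the main obstacle, but the sketched argument does not work. The identity $p=\beta\circ F-F'\beta$, and hence the $F$-invariance of $\bar\partial\beta$, holds only on a neighbourhood of $K(F)$. A family of polynomials $F_t$ needs an $F$-invariant Beltrami differential on all of $\C$. On the basin of infinity such a differential is determined by pulling back its values near $\infty$. It cannot in general be pushed forward from a neighbourhood of $K(F)$, since the values at the $d$ preimages of a point must agree, and this obstruction sits exactly at the escaping critical points.

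In fact the inclusion is false in general, because $E^h$ only sees the hybrid class of the germ at $K$. Take $F(z)=z^3-3a^2z$ with $|a|$ large.
\begin{itemize}
\item Both critical points escape, $K(F)$ is a hyperbolic Cantor set of zero area, there is no critical relation in the basin, and $d(f)=0$.
\item Every nearby germ is hybrid conjugate to $f$ through a holomorphic motion. Differentiating these conjugacies along $f+tv$ gives $E^h(f,W)=\mathcal{B}_W$, so $T\cap E^h=T$ is $2$-dimensional.
\item By symmetry $g_F(a)=g_F(-a)$, where $g_F$ is the Green function. So $\pm a$ lie in the same closure of a grand orbit in the basin of $\infty$.
\item A global qc conjugacy preserves such closures. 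Hence every $G$ near $F$ in the global hybrid class has its two critical points at equal escape rates, and that class has empty interior in $\mathcal{P}_3$.
\end{itemize}
So both ``$\dim T^h=d-1-d(f)$'' and ``$T^h=T\cap E^h$'' fail here. The Teichm\"uller dimension counts foliated equivalence classes of escaping critical points, not grand orbits, and the hypotheses of the lemma do not exclude two of them sharing a Green level.

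\textbf{How the paper proceeds.} The paper needs neither Astorg's theorem nor hybrid classes here; it constructs vertical vectors directly.
\begin{itemize}
\item For $v$ a polynomial of degree at most $d-2$, set $\gamma(z)=-\sum_{n\ge 1}v(F^{n-1}(z))/(F^n)'(z)$ near $\infty$. This solves $v=\gamma\circ F-F'\gamma$ and tends to $0$ at $\infty$.
\item Continue $\gamma$ inward through $\gamma=(\gamma\circ F-v)/F'$.
\item Since there are no critical relations in $\C\setminus K(F)$, the only obstruction to a holomorphic continuation to $\C\setminus K(F)$ is one linear condition at each of the $d-1-d(f)$ escaping critical points $c$, namely $\sum_{n\ge 0}v(F^n(c))/(F^n)'(F(c))=0$.
\item The common kernel in the $(d-1)$-dimensional space of such $v$ therefore has dimension at least $d(f)$, and it lies in $E^{vert}_*(f,W)$.
\end{itemize}
This computation is also what would repair your Step~2. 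Since $E^h\cap E^{vert}_*=\{0\}$ by Lemma~\ref{lem:Lyudecomp}, these $d-1-d(f)$ functionals are jointly injective on $T\cap E^h$. That gives $\dim(T\cap E^h)\le d-1-d(f)$ directly, and the detour through the polynomial hybrid class becomes unnecessary.
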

\begin{proof}
Let $v$ be a polynomial of degree at most $d-2$. Take $R>0$ large enough such that $|F(z)|>|z|$ and $|DF(z)|>2$ whenever $|z|>R$. Then 
$$\gamma(z)=-\sum_{n=1}^\infty \frac{v(F^{n-1}(z))}{(F^{n})'(z)}$$
is a holomorphic function in $\{|z|>R\}$, since the series on the right hand side converges uniformly. Obviously, 
\begin{equation}\label{eqn:vertF}
v(z)=\gamma(F(z))-F'(z)\gamma(z)
\end{equation}
whenever $|z|>R$. 

If $F$ has no critical point in $\mathbb{C}\setminus K(F)$, then using the functional equation $\gamma(z)=(\gamma(F(z))-v(z))/F'(z)$ repeatedly, we obtain analytic continuation of $\gamma$ to $\mathbb{C}\setminus K_*(f)$ such that (\ref{eqn:vertF})holds for all $z\in \C\setminus K(F)$. So in this case, $v\in E^{vert}_*(f)$.

Now consider the case that $F$ has critical points in $\mathbb{C}\setminus K(F)$. Using the same functional equation, we obtain analytic continuation of $\gamma$  to the complement of the backward orbits of critical points in $\mathbb{C}\setminus K(f)$. To extend $\gamma$ further to $\mathbb{C}\setminus K(F)$, it is sufficient to have 
$$\sum_{n=0}^\infty \frac{v(F^n(c))}{DF^n(F(c))}=0$$
for each critical point $c$ in $\mathbb{C}\setminus K(F)$, since $F$ has no critical relation in $\C\setminus K(F)$.
Since $F$ has $d-1-d(f)$ critical points in $\mathbb{C}\setminus K(F)$,
this shows that $E^{vert}_*(f)$ has dimension at least $d(f)$.
\end{proof}
\begin{lemma}
$\text{dim}(E^{vert}_*(f, W))\le d(f)$. 
\end{lemma}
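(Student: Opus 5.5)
The plan is to exploit the structure already set up: $E^{vert}_*(f,W)$ sits inside the finite-dimensional space of polynomial vectors, and the Lyubich decomposition (Lemma~\ref{lem:Lyudecomp}, Theorem~\ref{thm:codimEh0}) splits $\mathcal{B}_W$. So the upper bound $\text{dim}(E^{vert}_*(f,W))\le d(f)$ reduces to producing enough horizontal vectors among polynomials.

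\textbf{Step 1: reduction to linear algebra in $T\mathcal{P}_d$.} Let $T$ be the space of polynomials of degree at most $d-2$, regarded as a subspace of $\mathcal{B}_W$; it is the tangent space of $\mathcal{P}_d$ at $F$. By Lemma~\ref{lem:Fvertup}, $E^{vert}_*(f,W)\subset T$. After shrinking $W$ to some $W'$ with $K_*(f)\subset W'\Subset W$, which does not change the dimension (proof of Theorem~\ref{thm:codimEh0}), we have $\mathcal{B}_{W'}=E^h\oplus E^{vert}_*$. Intersecting with $T\supset E^{vert}_*$ gives
$$T=(E^h(f,W')\cap T)\oplus E^{vert}_*(f,W').$$
Since $\text{dim}\, T=d-1$, it therefore suffices to show
$$\text{dim}(E^h(f,W')\cap T)\ge d-1-d(f).$$

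\textbf{Step 2: horizontal polynomial vectors from qc deformations.} Let $m=d-1-d(f)$. By the hypotheses of Theorem~\ref{thm:poly}, $m$ is exactly the number of critical points of $F$ in $\C\setminus K(F)$, each non-degenerate and lying in a distinct grand orbit.
\begin{itemize}
\item Consider Beltrami differentials $\mu$ that are $F$-invariant, supported in $\C\setminus K(F)$, with $\|\mu\|_\infty<1$.
\item For such a holomorphic family $\mu_\lambda$ with $\mu_0=0$, let $h_\lambda$ be the normalized solution of the Beltrami equation. Then $F_\lambda=h_\lambda\circ F\circ h_\lambda^{-1}$ is a holomorphic family in $\mathcal{P}_d$.
\item Differentiating $F_\lambda\circ h_\lambda=h_\lambda\circ F$ at $\lambda=0$ gives
$$\dot F=\beta\circ F-F'\beta,\qquad \beta=\dot h.$$
Here $\beta/dz$ is a qc vector field and $\bar\partial\beta=\dot\mu=0$ on $K(F)=K_*(f)$.
\item Hence $\dot F\in E^h(f,W')\cap T$.
\end{itemize}
So $E^h\cap T$ contains the tangent space, at $F$, of the quasiconformal deformation locus of $F$ supported on the basin of infinity.

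\textbf{Step 3 (main obstacle): this deformation locus has dimension at least $m$ at $F$.} Here I would invoke Astorg's result~\cite{Astorg} on the Teichm\"uller embedding in $\mathcal{P}_d$. The Teichm\"uller space of $F$ restricted to the basin of infinity has dimension equal to the number of grand orbits of escaping critical points. Under the no-critical-relation hypothesis this number is $m$, one parameter (stretching and twisting) per escaping critical point. Astorg shows that the natural map from this Teichm\"uller space to $\mathcal{P}_d$ is a holomorphic injective immersion. Its derivative at the base point therefore has rank $m$, and by Step 2 its image lies in $E^h\cap T$. This gives $\text{dim}(E^h\cap T)\ge m$, and with Step 1, $\text{dim}(E^{vert}_*(f,W))\le d-1-m=d(f)$.

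The delicate point is matching the hypotheses of Astorg's immersion statement with ours. In particular I need to check that the normalization to monic centered polynomials is respected, or otherwise quotient by affine conjugacy. If the immersion statement is not available in exactly this form, the fallback is a direct argument. Take the $m$ linear functionals
$$\ell_c(v)=\sum_{n\ge0}\frac{v(F^n(c))}{DF^n(F(c))}$$
from the previous lemma; every $v\in E^{vert}_*$ lies in their common kernel. It then suffices to show that the $\ell_c$ are linearly independent on $T$. For this one would exhibit, for each escaping critical point $c$, a deformation $\mu$ supported on a fundamental annulus of the grand orbit of $c$ only, and show that $\ell_{c'}(\dot F)$ vanishes for $c'\ne c$ but not for $c'=c$. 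The non-vanishing is essentially the transversality content of Astorg's theorem.
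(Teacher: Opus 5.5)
Your Steps 1--2 are exactly the paper's argument. $E^{vert}_*(f,W)\subset\mathcal{P}_{d-2}$ by Lemma~\ref{lem:Fvertup}, the decomposition of Theorem~\ref{thm:codimEh0} restricts to $\mathcal{P}_{d-2}=(E^h\cap\mathcal{P}_{d-2})\oplus E^{vert}_*$, and derivatives of qc deformations of $F$ supported on $\mathbb{C}\setminus K(F)$ are horizontal. The paper then appeals to Astorg just as you do.

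\textbf{The gap is the count in Step 3}, and the paper's text makes the same assertion. The part of $\mathrm{Teich}(F)$ coming from Beltrami differentials on the basin of infinity has dimension equal to the number of \emph{foliated} equivalence classes of escaping critical points (McMullen--Sullivan). Here $c\sim c'$ iff their grand orbits have the same closure, i.e.\ $G_F(c)/G_F(c')\in d^{\mathbb{Z}}$ for the Green function $G_F$. Astorg's immersion result gives rank equal to this number, not to the number of grand orbits. The hypothesis of Theorem~\ref{thm:poly} only forbids $F^n(c)=F^{n'}(c')$, so the two counts can differ.

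Concretely, let $F(z)=z^3+tz$ with both critical points $\pm c$ escaping. Then $K(F)$ is a Cantor set, $d(f)=0$, $m=2$, and there is no critical relation in the stated sense, but $G_F(c)=G_F(-c)$. In the B\"ottcher coordinate $w$ (where $F$ becomes $w\mapsto w^3$), the grand orbit relation contains rotations by all $3^n$-th roots of unity. Invariance under it forces every $F$-invariant Beltrami differential on the basin to have the form $\nu(|w|)\,w/\bar w$. Such a $\mu$ is symmetric under $z\mapsto -z$, so the normalized $h_\mu$ is odd and every deformation stays in the odd family $z^3+sz$. Step 3 therefore only yields $\dim(E^h\cap T)\ge 1$, i.e.\ $\dim E^{vert}_*(f,W)\le 1$ rather than $\le 0$. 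Your fallback fails for the same reason: no invariant $\mu$ on the basin can act near the grand orbit of $c$ without acting on that of $-c$.

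\textbf{The conclusion is still true here, but it needs an extra step.} Either fix works:
\begin{enumerate}
\item Strengthen the hypothesis so that distinct escaping critical points are not foliated equivalent. Then the foliated count equals $m$ and Step 3 goes through. This costs nothing in the application, since in Theorem~\ref{thm:straighten} the added critical points can be placed at generic levels.
\item For the lemma as stated, reduce to that generic case. Take a non-escaping GPL restriction of $F$ near $K(F)$, make it balanced (Lemma~\ref{lem:subbal}, Proposition~\ref{prop:subord}), straighten it with generic escaping levels (Theorem~\ref{thm:straighten}), and transfer the dimension by Corollary~\ref{cor:Dfgerm}.
\end{enumerate}
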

\begin{proof} We shall use Astorg's solution~\cite{Astorg} to a problem in the dynmaical Teichm\"uller theory developed in \cite{McS}. Let 
$\mathcal{H}_0(F)$ denote the space of $G\in \mathcal{P}_d$ for which there exists a qc map $\varphi:\mathbb{C}\to\mathbb{C}$ such that $G\circ \varphi=\varphi\circ F$ on $\mathbb{C}$ and such that $\bar{\partial }\varphi=0$ a.e. on $K(F)$. Astorg's result implies that $\mathcal{H}_0(F)$ contains an embedded  submanifold $\mathcal{H}'$ of $\mathcal{P}_d$ of dimension $d-1-d(f)$ with $\mathcal{H}'\ni F$. Each tangent vector $v$ of $\mathcal{H}'$ at $F$ is contained in  $E^h(F, W)\cap \mathcal{P}_{d-2}=E^h(f,W)\cap \mathcal{P}_{d-2}$. By Theorem~\ref{thm:codimEh0},
$$\mathcal{P}_{d-2}= (E^h(f,W)\cap\mathcal{P}_{d-2})\oplus(E^{vert}_*(f,W)\cap\mathcal{P}_{d-2}).$$
By Lemma~\ref{lem:Fvertup}, it follows that 
 $$\dim(E^{vert}_*(f,W))=\text{dim}(E^{vert}_*(f,W)\cap\mathcal{P}_{d-2})\le d(f).$$  

Let us give more details on Astorg's result used above. Let $Bel(F)$ denote the space of $F$-invariant Beltrami differentials $\mu$ with $\|\mu\|_\infty<1$. Fix $Z=\{0,1,\infty\}$ and define a map 
$$\Psi^Z: Bel(F)\to Poly_d$$ as $\mu\mapsto \phi^Z_\mu\circ F\circ (\phi^Z_\mu)^{-1}$, where $\phi^Z_\mu$ is the unique qc map from the Riemann sphere $\widehat{\mathbb{C}}$ to itself fixing $0,1,\infty$ and $Poly_d$ is the space of polynomials of degree $d$. By the Measurable Riemann Mapping Theorem, $\phi^Z_\mu$ depends on $\mu$ holomorphically, and thus the map $\Psi^Z$ is holomorphic. Let $Bel_0(F)=\{\mu\in Bel(F):\mu=0 \mbox{ in } K(F)\}$. Then $\Psi^Z|_{Bel_0(F)}$ is also holomorphic. In \cite[Theorem 4.5]{Astorg}, the rank of $D\Psi^Z(0)$ is calculated. The proof shows that the rank of $D\Psi^Z|_{Bel_0(F)}(0)$ is equal to the number of grand orbits of critical points in $\mathbb{C}\setminus K(F)$, which is $d-1-d(f)$ in our case. By \cite[Proposition 7]{Astorg}, $D\Psi^Z|_{Bel_0(F)}(\mu)$ has the same rank for all $\mu\in Bel_0(F)$. For each $\mu\in Bel_0(F)$, $\Psi^Z(\mu)$ is affinely conjugate to a map in $\mathcal{H}_0(F)$. Since the coefficients of $\Psi^Z_\mu$ depend on $\mu$ holomorphically, in a neighborhood of $\mu=0$, we can reparametrize $\Psi^Z(\mu)$ to a polynomial $\Psi(\mu)\in \mathcal{P}_d$ which also depends holomorphically in $\mu$. The rank of $D\Psi|_{Bel_0}(\mu)$ keeps the same. By the Implicit Function Theorem, this implies the existence of $\mathcal{H}'$.       
\end{proof}

\begin{remark} The second condition is not necessary for Theorem~\ref{thm:poly} to hold. It was added for simplicity of proof. 
\end{remark}

\subsection{Subordination}\label{subsec:subord}
We say that a GPL mapping $f:U\to V$ is subordinate to another GPL mapping $\widetilde{f}:\widetilde{U}\to \widetilde{V}$ if the following hold:
\begin{itemize}
\item $\widetilde{V}=V$ and each component of $U$ is a component of $\widetilde{U}$; 
\item $\widetilde{f}|_U=f$;
\item for each component $Q$ of $\widetilde{U}\setminus U$, $Q$ is disjoint from the critical orbits of $f$ and $\widetilde{f}|_Q$ is conformal onto its image. 
\end{itemize}


\begin{lemma}\label{lem:subordJulia} If a GPL $f$ is subordinate to another GPL $\widetilde{f}$, then 
$$\text{area}\left(K(\widetilde{f})\setminus\bigcup_{n=0}^\infty \widetilde{f}^{-n}(K(f))\right)=0.$$
\end{lemma}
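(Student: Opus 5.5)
Write $K=K(f)$, $\widetilde K=K(\widetilde f)$ and $X=\widetilde K\setminus\bigcup_{n\ge0}\widetilde f^{-n}(K)$. A point $z\in X$ has its whole forward $\widetilde f$-orbit in $\widetilde U$, and no iterate lies in $K$. The plan is to show that such an orbit visits $\widetilde U\setminus U$ infinitely often, with iterates that are eventually univalent with bounded distortion near the visits. Then a Lebesgue density argument gives $\mathrm{area}(X)=0$.

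\textbf{Step 1: reduction to infinitely many visits.} If $\widetilde f^n(z)\in U$ for all $n\ge N$, then $\widetilde f^N(z)\in K$, since $\widetilde f=f$ on $U$. Hence every $z\in X$ has infinitely many $n$ with $\widetilde f^n(z)\in \widetilde U\setminus U$. The set $\widetilde U\setminus U$ is a finite union of components $Q$. Each such $Q$ is compactly contained in $V$ and is disjoint from the critical orbits of $f$. Since $\widetilde f|_Q$ is conformal, the critical points of $\widetilde f$ all lie in $U$, so every critical orbit of $\widetilde f$ is a critical orbit of $f$, at least until it leaves $V$.

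\textbf{Step 2: univalent pullbacks with bounded distortion.} The difficulty is that $\widetilde f$-critical points inside $U$ whose orbits stay in $K$ may accumulate near $\partial Q$. To avoid this, I would fix a slightly larger Jordan domain $Q'\supset \overline Q$, $Q'\Subset V$, disjoint from the closure of the postcritical set of $f$ intersected with $K$.

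The first obstacle arises here. Critical orbits of $f$ that escape $K$ leave $U$ after finitely many steps, so they are finite. Critical orbits in $K$ have closure in $K$. The set $K$ is disjoint from $\overline Q$ because $Q$ is a component of $\widetilde U$ different from the components of $U$; it is disjoint from $Q$ as a set, and the closures are disjoint since the components of $\widetilde U$ have disjoint closures. Thus the postcritical set of $\widetilde f$ has positive distance from $\overline Q$, and I can choose $Q'$ with $\overline{Q'}$ avoiding it as well.

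Given $z\in X$ and a visit time $n$ with $\widetilde f^n(z)\in Q$, the branch of $\widetilde f^{-n}$ sending $\widetilde f^n(z)$ to $z$ extends univalently to $Q'$. All pullbacks of $Q'$ along the orbit avoid critical points, because a critical point in a pullback would force a critical value, hence a postcritical point, into $Q'$. By the Koebe distortion theorem, the pullback $D_n\ni z$ of $Q'$ has distortion bounded uniformly in $n$.

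\textbf{Step 3: a definite proportion escapes.} The set $Q'\setminus \widetilde K$ contains a definite open piece relative to $Q$, since $\widetilde f(Q)$ is a whole component of $V$. More precisely, $\widetilde f(Q)\supset$ some point of $V\setminus \widetilde U$ or $\widetilde f(Q)$ is a component of $V$ containing an open subset of $V\setminus\overline{\widetilde U}$, because $\widetilde U\Subset V$. Pulling back by one more step, $Q\setminus\widetilde f^{-1}(\widetilde U)$ has positive area. Choosing $Q'$ so that $Q'\setminus\widetilde K$ has area at least $\epsilon\,\mathrm{area}(Q')$, the bounded distortion gives
\[
\mathrm{area}(D_n\setminus \widetilde K)\ge \epsilon'\,\mathrm{area}(D_n).
\]

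\textbf{Step 4: density argument.} Next I need $\operatorname{diam}D_n\to0$ as $n\to\infty$ along visit times. This follows from the hyperbolic contraction used in Lemma~\ref{lem:expansion}. Alternatively, it follows from the fact that univalent pullbacks of a disk of bounded degree shrink, because the $D_n$ for distinct $n$ are nested pullbacks of a finite collection and $\widetilde f$ expands the hyperbolic metric of $V\setminus\widetilde K$ or $V$. This is the step I expect to need most care, and I would try the Schwarz-lemma contraction of inverse branches $V\to\widetilde U\subsetneq V$ first.

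With $D_n$ shrinking to $z$ and of bounded shape, $z$ is not a Lebesgue density point of $\widetilde K$, hence not a density point of $X$. By the Lebesgue density theorem, $\mathrm{area}(X)=0$.
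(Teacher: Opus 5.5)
Steps 1--3 are essentially sound. The density argument, however, stands entirely on Step 4, $\operatorname{diam} D_n\to 0$, and that step is not established. None of the reasons you sketch works as stated:
\begin{itemize}
\item The orbit of $z\in X$ lies in $K(\widetilde f)$, so the hyperbolic metric of $V\setminus K(\widetilde f)$ says nothing along it.
\item $\widetilde f:\widetilde U\to V$ is branched, so it does not expand the hyperbolic metric of $V$. Its inverse branches are not defined on $V$, so Schwarz--Pick cannot be applied to them.
\item Nestedness alone does not force shrinking; pullbacks of a Siegel disk along an orbit inside it are nested and do not shrink.
\item Lemma~\ref{lem:expansion} concerns $f$ rather than $\widetilde f$, controls only $\partial V_k$, and uses the absence of critical points in $U\setminus K(f)$.
\end{itemize}

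The missing argument must use the infinitely many returns to $\widetilde U\setminus U$. One way:
\begin{itemize}
\item Take visit times $n_k$ landing in a fixed component $Q$. The branches $\phi_{n_k}$ are univalent on $Q'$ with values in the bounded set $\widetilde U$.
\item If $\operatorname{diam}\phi_{n_k}(Q'')\not\to 0$, a subsequence converges to a univalent $\phi$.
\item Take an open disk $O\Subset Q$ with $\widetilde f(O)\cap\overline{\widetilde U}=\emptyset$ (your Step 3). Then the sets $\phi_{n_k}(O)$ contain a common disk $B_0$ for all large $k$.
\item So every point of $B_0$ leaves $\widetilde U$ exactly at time $n_k+1$ for infinitely many $k$, which is absurd.
\end{itemize}
Alternatively, make your hyperbolic idea precise with $\Omega=V\setminus K(f)$. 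The orbit of $z$ never meets $K(f)$. When $f$ is non-escaping (the case used in Proposition~\ref{prop:subord}), $\widetilde f:\widetilde U\setminus\widetilde f^{-1}(K(f))\to\Omega$ is a covering from a proper subdomain. It therefore expands $\rho_\Omega$, by a uniform factor $\lambda>1$ at each visit to a fixed compact part of $Q$. This gives $|(\widetilde f^{n})'(z)|\to\infty$, and Koebe then gives $\operatorname{diam} D_n\to 0$.

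For comparison, the paper avoids shrinking and density points altogether. It uses the first return map $R$ to $Y=\widetilde U\setminus U$, whose branches map components of $Y'_n=R^{-n}(Y)$ conformally onto components of $Y$. Koebe then gives $\text{area}(Y'_{n+1})\le(1-\varepsilon)\,\text{area}(Y'_n)$ directly.

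Two smaller points in Step 2:
\begin{itemize}
\item Only the open set $Q$ is assumed disjoint from the critical orbits. An escaping critical orbit may therefore end on $\partial Q$, and then a $Q'\supset\overline Q$ avoiding the postcritical set need not exist. This is harmless: visit points whose orbit continues lie in the compact set $\overline{Q\cap\widetilde f^{-1}(\widetilde U)}\subset Q$, so univalence on $Q$ and Koebe on a domain compactly inside $Q$ suffice. In the paper's application $f$ is non-escaping anyway.
\item Koebe bounds distortion only on compact subsets of the domain of univalence. So $D_n$ should be the pullback of an intermediate domain $Q''$, not of $Q'$ itself.
\end{itemize}
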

\begin{proof} Let $Y=\widetilde{U}\setminus U$ and consider the first return map $R:Y'\to Y$, where  
$$Y':=\{z\in Y: \exists n>0, \widetilde{f}^n(z)\in Y\},$$ and 
$R(z)=\widetilde{f}^{n(z)}(z)$, where $n(z)$ is the minimal positive integer such that $\widetilde{f}^{n(z)}(z)\in Y$. Let $$E=\{z\in Y': R^k(z)\in Y' \text{ for each } k=1,2,\dots\}.$$
Since each point in 
$K(\widetilde{f})\setminus\bigcup_{n=0}^\infty \widetilde{f}^{-n}(K(f))$ returns to $Y$ infinitely often, it suffices to show $\text{area}(E)=0$.

Since $Y$ is disjoint from the critical orbits of $\widetilde{f}$, $R$ maps each component of $Y'$ conformally onto a component of $Y$. Note also that $Y'\Subset Y$. So for each component $A$ of $Y$, $\text{area}(A\setminus Y')/\text{area}(A)>0$. For each $n=0,1,\ldots$, let $Y_n'=R^{-n}(Y)$. Then each component $Q$ of $Y_n'$ is mapped by $R^n$ onto a component $A$ of $Y$ and $R^{n+1}(Q)\subset Y'$. By the Koebe distortion theorem, $\text{area}(Q\setminus Y'_{n+1})/\text{area}(Q)$ is bounded from below by a positive constant $\varepsilon$. 
Thus $\text{area}(Y_{n+1}')/\text{area}(Y_n')\le 1-\varepsilon$. 
Since $E=\bigcap_{n=1}^\infty Y_n'$, this implies that $\text{area}(E)=0$. 
\end{proof}

A GPL $f:U\to V$ is non-escaping if $f$ has no critical point in $U\setminus K(f)$.
\begin{proposition}\label{prop:subord} If a non-escaping GPL $f:U\to V$ is subordinate to another GPL $\widetilde{f}:\widetilde{U}\to V$, then 
$\text{dim}(E^{vert}_*(f, U))=\text{dim}(E^{vert}_*(\widetilde{f},\widetilde{U})).$
\end{proposition}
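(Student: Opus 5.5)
We will compare the two vertical spaces through the two decompositions of Theorem~\ref{thm:codimEh0}, and show that the codimensions of the horizontal spaces agree. Since $E^h$ is defined by a dynamical equation near the filled Julia set, it is more convenient to work with codimensions of horizontal spaces than with vertical spaces directly. Fix a common open set: let $W$ be a neighborhood of $K(\widetilde f)$ with $W\Subset \widetilde U$. Because the components of $U$ are components of $\widetilde U$, the set $W\cap U$ is a neighborhood of $K(f)\subset K(\widetilde f)$ compactly contained in $U$. By Theorem~\ref{thm:codimEh0} it then suffices to show that $E^h(f,W\cap U)$ and $E^h(\widetilde f,W)$ have the same codimension in $\mathcal{B}_{W\cap U}$ and $\mathcal{B}_W$ respectively. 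Write $\mathcal{B}_W=\mathcal{B}_{W\cap U}\oplus\mathcal{B}_{W\setminus \overline U}$; the second factor consists of functions on the components of $W$ lying in $\widetilde U\setminus U$.

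\textbf{Key step.} The main claim is that a vector $v\in\mathcal{B}_W$ is $\widetilde f$-horizontal if and only if $v|_{W\cap U}$ is $f$-horizontal. Once this is established, the restriction map induces an isomorphism
\[
\mathcal{B}_W/E^h(\widetilde f,W)\cong \mathcal{B}_{W\cap U}/E^h(f,W\cap U),
\]
which gives the result.

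\textbf{Direction ($\Rightarrow$).} Restrict $\beta$ to a neighborhood of $K(f)$. Since $\bar\partial\beta=0$ a.e.\ on $K(\widetilde f)\supset K(f)$, the restricted $\beta$ witnesses that $v|_{W\cap U}$ is $f$-horizontal.

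\textbf{Direction ($\Leftarrow$).} This is the main obstacle. Given $\beta$ for $f$, defined near $K(f)$, we must construct $\widetilde\beta$ on $\C$ satisfying the equation $v=\widetilde\beta\circ\widetilde f-\widetilde f'\widetilde\beta$ on a neighborhood of all of $K(\widetilde f)$, with $\bar\partial\widetilde\beta=0$ a.e.\ on $K(\widetilde f)$. The construction proceeds as follows.

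1. Using the non-escaping hypothesis, mimic Steps~2--3 of Lemma~\ref{lem:Lyudecomp} for $f$. Extend $\beta$ to a qc vector field on $\C$ (compactly supported $\bar\partial$ off $U$) that satisfies the pull-back relation $\beta=(\beta\circ f-v)/f'$ on all of $U\setminus K(f)$. Any fixed qc interpolation on $V\setminus U$ propagates with bounded $\bar\partial$, because $f$ is an unbranched covering there.

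2. On each component $Q$ of $\widetilde U\setminus U$, the map $\widetilde f|_Q$ is conformal and $Q$ avoids the critical orbits. Define $\widetilde\beta$ on $\widetilde f^{-n}$-preimages by the same formula $\widetilde\beta=(\widetilde\beta\circ\widetilde f-v)/\widetilde f'$, pulling back along inverse branches. No critical points are met, so every pullback is a univalent branch.

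3. By Lemma~\ref{lem:subordJulia}, $K(\widetilde f)$ equals $\bigcup_n\widetilde f^{-n}(K(f))$ up to a null set. On each preimage of $K(f)$, $\bar\partial\widetilde\beta$ is the pullback of $\bar\partial\beta=0$ under conformal maps, so it vanishes. On the residual null set there is nothing to check.

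4. Bound $\|\bar\partial\widetilde\beta\|_\infty$. The pullback of a Beltrami-type quantity $\bar\partial\beta$ under a conformal map $\phi$ transforms as $\bar\partial(\beta\circ\phi/\phi')=(\bar\partial\beta\circ\phi)\,\overline{\phi'}/\phi'$, so its sup norm is preserved. The $v$ term is holomorphic and contributes nothing. Hence $\widetilde\beta$ is a qc vector field.

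5. Continuity of $\widetilde\beta$ at points of $K(\widetilde f)$ not in any preimage of $K(f)$ is not needed in order for $\widetilde\beta$ to be a qc vector field. If it is needed, apply the expansion argument of Lemma~\ref{lem:expansion} to $\widetilde f$, which is non-escaping away from the critical orbits, to show that the correction terms decay near $K(\widetilde f)$; alternatively, solve $\bar\partial\widetilde\beta=\mu$ globally, as in Step~4 of Lemma~\ref{lem:Lyudecomp}.

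Two details need care: matching the extension on $\partial Q$ with the pullback structure, and the global fixed-point solution in the last step. For the latter, the cleanest route is to construct $\mu=\bar\partial\widetilde\beta$ as an $\widetilde f$-compatible Beltrami field vanishing on $K(\widetilde f)$, and then recover $\widetilde\beta$ by solving the $\bar\partial$-equation. The difference between $v$ and $\widetilde\beta\circ\widetilde f-\widetilde f'\widetilde\beta$ is then holomorphic on $\C\setminus K(\widetilde f)$. This difference is $\widetilde f$-horizontal on $U$-components, and vertical-type arguments (the trivial-intersection part of Lemma~\ref{lem:Lyudecomp}) force it to vanish.
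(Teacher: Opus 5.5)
Your overall reduction matches the paper's. Restriction induces a map between the quotients by the horizontal spaces. It is well defined by your ($\Rightarrow$), since $K(f)\subset K(\widetilde f)$, and it is surjective because components of $U$ are components of $\widetilde U$. So everything rests on ($\Leftarrow$), and that is where the proposal has a genuine gap.

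\textbf{The exact pull-back is an unsolved fixed-point problem.} Steps 1--2 are not a construction. Step 1 propagates an arbitrary interpolation on $V\setminus U$, and this set contains $Y=\widetilde U\setminus U$, to define $\widetilde\beta$ on $U\setminus K(f)$. Step 2 then overwrites the values on $Y$ by pulling back from later points of the orbit, which may lie in $U$. Orbits can stay in $\widetilde U$ and return to $Y$ infinitely often: this is the set $E$ of Lemma~\ref{lem:subordJulia} and its preimages. It is nonempty in general; for instance it contains a repelling fixed point whenever a component $Q$ of $Y$ lies in $\widetilde f(Q)$. Along such orbits no value is ever prescribed, so $\widetilde\beta$ must be obtained by solving the relation along the whole orbit. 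You flag this but do not do it.

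\textbf{Step 1 is also inconsistent as stated.} You cannot both keep the given $\beta$ near $K(f)$ and propagate an arbitrary interpolation inward. The propagated field differs from $\beta$ there by a solution of $\delta\circ f=f'\delta$. If you drop $\beta$, you must show this difference tends to $0$ at $K(f)$ and glue across $K(f)$.

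\textbf{Continuity is needed.} The claim in Step 5 that continuity is not needed is false. Membership in $E^h$ requires a continuous $\beta$ on $\C$ whose \emph{distributional} $\bar\partial$ lies in $L^\infty$. A function glued from different formulas and left undefined on a null set can carry a singular $\bar\partial$ along the gluing locus. Zero area alone does not make a compact set removable.

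\textbf{The fallback is circular.} Solving $\bar\partial\widetilde\beta=\mu$ as in Step 4 of Lemma~\ref{lem:Lyudecomp} only reproduces the decomposition of $v$ for $\widetilde f$. The remainder $w$ is $\widetilde f$-vertical, since its $\gamma$ is holomorphic only on $\C\setminus K(\widetilde f)$, while $w|_U$ is $f$-horizontal. The trivial-intersection argument needs both properties for the same map and the same filled Julia set, so it does not apply. Indeed, ``$w\in E^{vert}_*(\widetilde f)$ and $w|_U\in E^h(f)$ imply $w=0$'' is exactly the injectivity you are trying to prove.

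\textbf{The missing idea} is the infinitesimal pull-back with averaging of Avila--de Melo--Lyubich, which the paper uses.
\begin{itemize}
\item Take neighbourhoods $P_n$ of $K(f)$ with $\widetilde f(P_n)\Subset P_{n-1}$. Set $\beta_n=\alpha$ on $P_n$, $\beta_n=(\beta_{n-1}\circ\widetilde f-v)/\widetilde f'$ on $\widetilde U\setminus P_n$, and $\beta_n=\beta_0$ off $\widetilde U$.
\item Because $\alpha$ already satisfies the relation on $P\supset P_n$, the pieces match near $\partial P_n$. So each $\beta_n$ is an honest qc vector field on $\C$, vanishing off $V$, with $\|\bar\partial\beta_n\|_\infty$ uniformly bounded and $v=\beta_{n-1}\circ\widetilde f-\widetilde f'\beta_n$ on $\widetilde U$.
\item No fixed-point problem has to be solved. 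The Ces\`aro means $\frac1n\sum_{k<n}\beta_k$ are precompact in the uniform topology. Any limit $\gamma$ is a qc vector field with $v=\gamma\circ\widetilde f-\widetilde f'\gamma$ on $\widetilde U$ and $\gamma=\alpha$ on $K(f)$.
\item Invariance of $\bar\partial\gamma$ together with Lemma~\ref{lem:subordJulia} then gives $\bar\partial\gamma=0$ a.e.\ on $K(\widetilde f)$.
\end{itemize}
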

\begin{proof} Let us consider the natural embedding $\iota: \mathcal{B}(\widetilde{U})\to \mathcal{B}(U)$. 

Since $K(f)\subset K(\widetilde{f})$, it follows from definition that $\iota(E^h(\widetilde{f}, \widetilde{U}))\subset E^h(f, U)$. 
Thus $\iota$ induces a linear map $$\bar{\iota}:\mathcal{B}(\widetilde{U})/E^h(\widetilde{f}, \widetilde{U})\to \mathcal{B}(U)/E^h(f, U).$$
By Lemma~\ref{lem:Lyudecomp}, it suffices to show that $\bar{\iota}$ is a linear automorphism. Since $\iota$ is surjective, so is $\bar{\iota}$. It remains to show that $\bar{\iota}$ is injective. This amounts to prove the following.

{\bf Claim.} Let $v\in \mathcal{B}(\widetilde{U})$. Assume that there is a qc vector field $\alpha(z)/dz$ in $\mathbb{C}$ such that 
$$v(z)=\alpha(f(z))-f'(z) \alpha(z)$$
holds in a neighborhood $P$ of $K(f)$ in $U$ and $\bar{\partial} \alpha=0$ holds a.e. on $K(f)$. Then $v\in E^h(\widetilde{f}, \widetilde{U})$. 

To prove this, we shall apply the infinitesimal pullback argument by Avila-de Melo-Lyubich~\cite[Section 6.2]{ALM}. Let $\{P_n\}_{n=0}^\infty$ be a sequence of neighborhoods of $K(f)$ such that $P_0=P\subset U$ and $\widetilde{f}(P_{n})\Subset P_{n-1}\cap \widetilde{U}$ for each $n\ge 1$.
We shall construct inductively a sequence $\beta_n(z)/dz$ qc vector fields on $\mathbb{C}$, $n=0,1,\ldots$, with the following properties:
\begin{enumerate}
\item $\beta_n(z)=0$ outside $V$;
\item $\beta_n(\widetilde{f}(z))-\widetilde{f}'(z)\beta_n(z)=v(z)$ for $z\in \partial \widetilde{U}$;
\item $\beta_n(z)=\alpha(z)$ for $z\in P_n$; 
\item for $n\ge 1$,  $v(z)=\beta_{n-1}(\widetilde{f}(z))-\widetilde{f}'(z)\beta_n(z)$,  
if $z\in \widetilde{U}\setminus P_{n}$.
\end{enumerate}

For $n=0$, take an arbitrary $\beta_0$ such that (1)-(3) holds. Assume now that $n\ge 1$ and $\beta_{n-1}(z)/dz$ has been constructed such that (1)-(3) holds with $n$ replaced by $n-1$. 
Define
$$\beta_{n}(z)=\left\{
\begin{array}{ll}
\beta_{n-1}(z) & \mbox{ if } z\not\in \widetilde{U};\\
\frac{\beta_{n-1}(\widetilde{f}(z))-v(z)}{\widetilde{f}'(z)},  
&\mbox{ if }z\in \widetilde{U}\setminus P_{n}.\\
\alpha(z) & \mbox{ if } z\in P_{n}.
\end{array}
\right.
$$
Since $f$ is non-escaping, $\text{Crit}(\widetilde{f})=\text{Crit}(f) \subset K(f)\subset P_{n+1}$. Thus $\beta_{n}$ is well-defined and satisfies (1)-(3). Moreover, for $z\in P_{n}\setminus K(f)$, we have 
$$\beta_{n}(z)=\alpha(z)=\frac{\alpha(\widetilde{f}(z))-v(z)}{\widetilde{f}'(z)}
=\frac{\beta_{n-1}(\widetilde{f}(z))-v(z)}{\widetilde{f}'(z)},$$
which verifies (4). The vector field $\beta_n(z)/dz$ is clearly qc in $\C\setminus \partial P_n$. For $z$ in a small neighborhood $Q$ of $\partial P_n$, $\widetilde{f}(z)\in P_{n-1}\setminus K(\widetilde{f})$, so $\beta_n(z)=\alpha(z)$. This shows that $\beta_n(z)/dz$ is qc in $Q$, hence it is qc in $\C$.
This completes the construction of $\beta_n(z)$. 

Note that for each $n\ge 1$, $\beta_{n}(z)/dz$ is a qc vector field and $$\|\bar{\partial} \beta_{n}\|_\infty= \max(\|\bar{\partial} \beta_{n-1}\|_\infty, \|\bar{\partial} \alpha\|_\infty)=\max(\|\bar{\partial} \beta_0\|_\infty, \|\bar{\partial} \alpha\|_\infty).$$
Moreover, $\beta_n=\beta_0$ outside $\widetilde{U}$ and $\beta_n=\alpha$ on $K(f)$,  $\sum_{k=0}^{n-1}\beta_k(z)/n$ has a subsequence which converges uniformly in $\mathbb{C}$ to a qc vector field $\gamma$ with $\gamma=\alpha$ on $K(f)$. Then 
$$v(z)=\gamma(\widetilde{f}(z))-\widetilde{f}'(z) \gamma(z)$$
for all $z\in \widetilde{U}$. 

It remains to show that $\bar{\partial} \gamma(z)=0$ for a.e. $z\in K(\widetilde{f})$. Indeed, since $\gamma=\alpha$ on $K(f)$, we have $\bar{\partial} \gamma=\bar{\partial}\alpha=0$ holds a.e. on $K(f)$. Since $\bar{\partial}\gamma |_{K(f)}$ is $f$-invariant, we have 
$\bar{\partial}\gamma =0$ a.e. on $\bigcup_{n=0}^\infty \widetilde{f}^{-n}(K(f))$. 
By Lemma~\ref{lem:subordJulia}, $K(\widetilde{f})\setminus \bigcup_{n=0}^\infty \widetilde{f}^{-n}(K(f))$ has zero area. Thus  $\bar{\partial} \gamma(z)=0$ for a.e. $z\in K(\widetilde{f})$.
\end{proof}

\subsection{Straightening}\label{subsec:straighten}
We say that a GPL $f:U\to V$ is {\em balanced}, if the following holds for each component $Y$ of $V$,
\begin{itemize}
\item there exists a positive integer $d_{Y}$ such that for each $w\in V$, $f(z)-w$ has exactly $d_Y$ roots in $U\cap Y$, counting multiplicity; 
\item letting $m_Y$ denote the number of components of $Y\cap U$, then 
\begin{equation}\label{eqn:RH}
r_Y=m_Y-1- d_Y(n-1)
\ge 0,
\end{equation}
where $n$ is the number of components of $V$.
\end{itemize}

\begin{lemma}\label{lem:subbal}
Each GPL $f:U\to V$ is subordinate to a balanced GPL $\widetilde{f}: \widetilde{U}\to \widetilde{V}$ with $\widetilde{V}=V$, and $\text{Crit}(f)=\text{Crit}(\widetilde{f})$. 
\end{lemma}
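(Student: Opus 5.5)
The plan is to enlarge $U$ by adding new small Jordan disks $Q\Subset V\setminus\overline{U}$, each mapped conformally by a newly defined holomorphic map onto a component of $V$. The combinatorics are arranged so that the balancing conditions hold. Since $\widetilde{f}|_U=f$ and $\widetilde{f}|_{\widetilde U\setminus U}$ is univalent, we get $\text{Crit}(\widetilde f)=\text{Crit}(f)$ automatically. So the construction only has to meet three requirements: the new disks must avoid $\overline{U}$ and the critical orbits of $f$; the new map must extend analytically to a neighborhood of $\overline{\widetilde U}$; and the degree counts must satisfy (\ref{eqn:RH}).

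\textbf{Step 1: the counting.} Let $Y_1,\dots,Y_n$ be the components of $V$. For each $i$ and each component $Y$, let $a_{i}(Y)$ be the total degree of $f$ over $Y_i$ restricted to $U\cap Y$; this number is constant in $w\in Y_i$ because $f$ is proper on each component of $U$. Adding one new disk in $Y$ mapped univalently onto $Y_i$ raises $a_i(Y)$ by one and raises $m_Y$ by one. I choose a common target $d_Y\ge \max_i a_i(Y)$ and add $d_Y-a_i(Y)$ disks mapping onto $Y_i$ for each $i$. This makes $f(z)-w$ have exactly $d_Y$ roots in $\widetilde U\cap Y$ for every $w\in V$. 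The new number of components is $m_Y+\sum_i(d_Y-a_i(Y))$, which is at least $nd_Y-\sum_i a_i(Y)+m_Y$. Then
$$r_Y\ge nd_Y-\sum_i a_i(Y)+m_Y-1-d_Y(n-1)=d_Y-\sum_i a_i(Y)+m_Y-1.$$
Taking $d_Y\ge\sum_i a_i(Y)$ gives $r_Y\ge 0$, because $m_Y\ge1$ when $Y$ meets $U$. If some component $Y$ does not meet $U$, it does not arise here, since the definition of a GPL requires every component of $V$ to meet $U$.

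\textbf{Step 2: the geometric realization.} I place the required number of small pairwise disjoint round disks $Q$ inside $Y\setminus\overline{U}$, away from the critical orbits of $f$. This is possible because $Y\setminus\overline U$ is a nonempty open set and the critical orbits of $f$ form a countable set. Their closures accumulate only on $K(f)$ and $\partial V$. Hence, after shrinking, the disks can also be placed away from the closure of the postcritical set intersected with $V\setminus \overline U$, as long as that closure has empty interior there. To be safe, I take the disks inside a small disk $D\subset Y\setminus\overline U$ that is disjoint from the closure of the critical orbits in $V\setminus\overline U$. Such a disk exists because the orbits of the critical points in $U\setminus K(f)$ leave $U$ after finitely many steps, and the critical points in $K(f)$ have orbits in $K(f)\subset U$. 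So the critical orbits meet $V\setminus \overline U$ in a finite set.

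On each $Q$ I define $\widetilde f$ as a Riemann map $Q\to Y_i$, where $Y_i$ is the intended target. Since $Y_i$ is a Jordan disk with real analytic boundary and $Q$ is round, the Schwarz reflection principle extends the Riemann map analytically and univalently across $\partial Q$. Together with the given analytic extension of $f$ near $\overline U$, this gives an analytic extension of $\widetilde f$ to a neighborhood of $\overline{\widetilde U}$ (the closures are disjoint). The extended map has no critical points on $\partial\widetilde U$.

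\textbf{Step 3: verification.} The remaining conditions are immediate. We have $\widetilde U\Subset V=\widetilde V$, and each component of $U$ is a component of $\widetilde U$. Each new component $Q$ is mapped conformally onto a component of $V$ and is disjoint from the critical orbits of $f$. The balance conditions were arranged in Step 1.

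The main obstacle I expect is the subordination requirement that the new components avoid the critical orbits of $f$. Step 2 handles it by noting that only finitely many points of those orbits lie in $V\setminus\overline U$. Some care is also needed to read the definition of "balanced" so that $d_Y$ is independent of $w\in V$ across all target components; Step 1 addresses this by equalizing all the $a_i(Y)$ to a single $d_Y$.
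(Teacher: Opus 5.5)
Your proposal is correct and follows the paper's own argument, which simply adds finitely many conformal branches on disks in $V\setminus\overline{U}$ and notes that the critical orbits meet $V\setminus\overline{U}$ in only finitely many points. Your count with $d_Y\ge\sum_i a_i(Y)$ (giving $r_Y=d_Y-\sum_i a_i(Y)+m_Y-1\ge 0$, where the new component count is in fact exactly $m_Y+nd_Y-\sum_i a_i(Y)$) and your Schwarz-reflection remark for analyticity across $\partial\widetilde{U}$ fill in details the paper leaves implicit; the sentence about closures accumulating on $K(f)$ and $\partial V$ is unnecessary once you use that finiteness.
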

\begin{proof} We simply add finitely many conformal branches to the domain of $f$. Note that the critical orbits intersect $V\setminus \overline{U}$ only at finitely many points. 
\end{proof}

\begin{theorem}\label{thm:straighten}
Let $f:U\to V$ be a non-escaping balanced GPL. Then $f$ is hybrid conjugate to a polynomial $P$ with a conjugacy sending $K(f)$ to $K(P)$. Moreover, we can choose $P$ such that it has no critical relations in $\mathbb{C}\setminus K(P)$.
\end{theorem}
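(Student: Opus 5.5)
The plan is to construct a quasiregular extension of $f$ to the whole plane that behaves like a polynomial near infinity, and then straighten it by the Measurable Riemann Mapping Theorem. This follows Douady–Hubbard and Baranski.

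\textbf{Step 1: topological model.} The balanced condition is exactly what makes a model possible. For each component $Y$ of $V$, the Riemann–Hurwitz formula gives $r_Y$ as the number of critical points, with multiplicity, of a degree $d_Y$ branched cover from an $(m_Y-1)$-fold connected region onto the region exterior to all components of $V$ but one. More precisely, I will build a new domain $\widehat U\Supset \overline{V}$, a Jordan disk, and extend $f$ on each component $Y$ of $V$ to a map on the complement $Y\setminus U$ piece. The goal is a branched covering
\[
F:\widehat{\C}\setminus U\to \widehat{\C}\setminus V
\]
of the required degree $D=\sum_Y d_Y$ that agrees with $f$ on $\partial U$.

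Concretely, I will first choose a large disk $\Delta\supset \overline V$ and consider $\Delta\setminus \overline V$, a disk with $n$ holes. The preimage region should be $\Delta'\setminus \overline U$, where $\Delta'\Subset\Delta$; its boundary components are the $\sum_Y m_Y$ components of $\partial U$ together with one outer circle. The covering maps the outer circle to $\partial\Delta$ with degree $D$, and each component of $\partial U$ lying in $Y$ onto $\partial Y$ with the degree given by $f$. Since the degree onto each hole $\partial Y$ totals $d_Y$, an Euler characteristic count shows the number of critical points needed is
\[
\sum_Y r_Y + (\text{contributions from the outer annulus}),
\]
and nonnegativity of the $r_Y$ is what allows such a branched cover to exist. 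I will construct it by a Hurwitz-type argument, for instance by gluing pairs of pants. I will place the critical points in the escaping region with distinct non-colliding orbits, so that the final polynomial has no critical relations in $\C\setminus K(P)$; the critical points should be simple with distinct critical values. Outside $\Delta'$, I will take $F(z)=z^D$ in suitable coordinates. Along the interpolation annuli I will smooth $F$ to be quasiregular, which is possible because $f$ extends analytically across $\partial U$ and the boundaries are real analytic.

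\textbf{Step 2: invariant structure.} I will define an $F$-invariant Beltrami coefficient $\mu$ that is $0$ on $K(f)$ and near $\infty$, and equal to the standard structure pulled back on the fundamental annuli. Since every orbit passes through the non-holomorphic region at most once, $\|\mu\|_\infty<1$. The escaping orbits under the extension leave to $\infty$; the points of $U$ that eventually land in $\widehat{\C}\setminus U$ never return, because $F$ maps the exterior into the exterior. Straightening by $\phi$ then gives a polynomial $P=\phi\circ F\circ\phi^{-1}$ of degree $D$. Moreover $\phi(K(f))=K(P)$, because the filled Julia set of $F$ equals $K(f)$, and $\bar\partial\phi=0$ on $K(f)$.

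\textbf{Main obstacle.} I expect the hardest step to be the existence of the branched covering in Step 1 with the prescribed boundary degrees, which is where balancedness enters and which is where Baranski's extra hypothesis is removed. To handle it, I will first try to build the cover component by component. Over each $Y$, I will take a degree $d_Y$ cover of a disk that is branched at $r_Y$ points, realizing the prescribed $m_Y$ boundary preimages. I will then join these covers along slits to obtain the total space over $\Delta\setminus\overline V$, and check connectivity and the Riemann–Hurwitz count explicitly. Placing the critical values generically gives the absence of critical relations.
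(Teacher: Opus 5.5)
Your overall route is the paper's: a degree $D=\sum_Y d_Y$ quasiregular extension equal to $z^D$ near $\infty$, an invariant Beltrami coefficient, then the Measurable Riemann Mapping Theorem. However, the extension you actually describe does not support the dynamical claims of your Step 2.

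You build a single branched cover $F:\Delta'\setminus\overline U\to\Delta\setminus\overline V$ with $\overline V\subset\Delta'\Subset\Delta$. Its image $\Delta\setminus\overline V$ contains $\Delta'\setminus\overline V$, which lies in the region where $F$ is merely quasiregular. So nothing prevents orbits from returning to $\Delta'\setminus\overline U$ an unbounded number of times, or from staying there forever; $F$ may well have a fixed point in $\Delta'\setminus\overline V$. Your justification, that $F$ maps $\widehat{\C}\setminus U$ into itself, only shows that orbits never re-enter $U$. It gives none of the following:
\begin{itemize}
\item a uniform bound on passages through the non-holomorphic region, needed for $\|\mu\|_\infty<1$;
\item escape to $\infty$, needed for $K(F)=K(f)$ and hence for $\phi(K(f))=K(P)$;
\item that the critical points you add actually escape.
\end{itemize}
Symptomatically, the Riemann--Hurwitz count for your one-level cover involves only the global quantity $\sum_Y r_Y+n-1=M-1-D(n-1)$, where $M$ is the number of components of $U$. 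The per-component conditions defining balancedness are never really used. (Also, for a component $U_0$ of $U$ inside $Y$, $\partial U_0$ maps onto $\partial f(U_0)$, not onto $\partial Y$.)

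The missing idea is a nested, two-level extension, which is what the paper does. Take $\Delta'=\mathbb{D}_R$, $\Delta=\mathbb{D}_{R^D}$ and $F(z)=z^D$ off $\Delta'$. For each component $Y$ of $V$, first prescribe $F|_{\partial Y}$ to be an unbranched $d_Y$-fold covering of $\partial\Delta'$. Then extend $F$ in two separate pieces:
\begin{itemize}
\item to a degree $D$ branched cover $\Delta'\setminus\overline V\to\Delta\setminus\overline{\Delta'}$, where Riemann--Hurwitz forces $n-1$ critical points;
\item for each $Y$, to a degree $d_Y$ branched cover $Y\setminus\overline U\to\Delta'\setminus\overline V$ extending $f|_{\partial U\cap Y}$.
\end{itemize}
The second piece is exactly where balancedness enters. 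The boundary degrees are compatible because every point of $V$ has $d_Y$ preimages in $U\cap Y$. The count $r_Y=m_Y-1-d_Y(n-1)\ge 0$ is precisely the number of critical points Riemann--Hurwitz forces.

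With this nesting, an orbit meets $\Delta'\setminus\overline U$ at most twice, via $Y\setminus\overline U\to\Delta'\setminus\overline V\to\{R<|z|<R^D\}$, and then escapes under $z^D$. This gives both the bounded dilatation of the invariant structure and $K(F)=K(f)$. The added critical points then escape automatically and can be chosen simple with disjoint orbits, which gives the absence of critical relations.
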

\begin{proof} Let $d=\sum_{Y} d_Y$, where $Y$ runs over all components of $V$. 
Let $R>1$ be a large number such that $V'':=\mathbb{D}_{R^d}\Supset V':=\mathbb{D}_{R}$ contains $\overline{V}$. 
We extend $f$ to a smooth branched covering from $\mathbb{C}$ to $\mathbb{C}$ as follows. 

{\bf Step 1.} Define $f(z)=z^d$ for all $z\not\in V'$. 

{\bf Step 2.} Extend the definition of $f$ in $V'\setminus V$ so that $f: V'\setminus V\to V''\setminus V'$ is a branched covering of degree $d$, continuously extendable to boundary, such that for each component $Y$ of $V$, $f|_{\partial Y}$ is a $d_Y$-to-1 unbranched smooth covering onto $\partial V'$. For this step, by the Riemann-Hurwitz formula, we need to add $n-1$ critical points (counting multiplicity) in $V'\setminus \overline{V}$, where $n$ is the number of components of $V$. We choose the newly added critical points to be simple and have disjoint forward orbits under $f$ (already defined on $\C\setminus V$). 

{\bf Step 3.} We extend the definition of $f$ to $Y\setminus U$ for each component $Y$ of $V$, so that $f:Y\setminus U\to V'\setminus V$ is a branched covering of degree $d_Y$. The condition (\ref{eqn:RH}) guarantees that such an extension exists by adding $r_Y$- simple critical points with disjoint orbits. We can choose the location of these newly added critical points such that their orbits are disjoint from the orbit of critical points added in Step 2. 

The map $f$ is quasiregular, and it is holomorphic outside $\overline{V'\setminus U}$. Any orbit of $f$ can intersect $\overline{V'\setminus U}$ at most twice. Thus $f$ is hybrid conjugate to a polynomial $P$.

\end{proof}
\begin{remark} In~\cite[Corollary 9]{Baranski}, the theorem is proved under an additional technical assumption $r_Y=0$ for each component $Y$ of $V$. If such an assumption holds, then in Step 3 above, no critical point is needed to add.
\end{remark}

\begin{proof}[Proof of Theorem~\ref{thm:dimension}] 
Let $f_0:U\to V$ be a GPL representative of $j_{W_0}f_0$ such that $f_0$ has no critical point in $\overline{U}\setminus K_*(f_0)$. By Theorem~\ref{thm:codimEh0}, it suffices to show that $\text{dim}(E^{vert}_*(f_0,U))=d(j_{W_0}f_0)$. 
By Lemma~\ref{lem:subbal} and Proposition~\ref{prop:subord}, we may assume that $f_0:U\to V$ is balanced. 

By Theorem~\ref{thm:straighten}, $f_0$ is hybrid equivalent to a polynomial $P:U'\to V'$ such that $P$ has no critical relation in $\mathbb{C}\setminus K(P)$ via a conjugacy sending $K_*(f_0)$ to $K(P)$. In particular, 
$$d(j_{U'}P)=d(j_U f_0)=d(j_{W_0}f_0).$$
By Corollary~\ref{cor:Dfgerm}, $\text{dim}(E^{vert}_*(f_0,U))=\text{dim}(E^{vert}_*(P, U'))$. 
By Theorem~\ref{thm:poly}, $$\text{dim}(E^{vert}_*(P,U'))=d(j_{U'}P)=d(j_{W_0}f_0).$$ 
\end{proof}
\section{Hybrid classes in Banach neighborhoods}\label{sec:hybridclass}
In this section, we shall prove Theorem~\ref{thm:hybridclass}. The main idea is to view the connected component of a local hybrid class as the set of fixed points of the `mating' map $I^f(g)$. Complication arises as the domain and range of $I^f$ lie in different Banach spaces.  

We start with a variation of the Implicit Function Theorem which will be used in our proof. 

For a subset $E$ of a complex Banach space $\mathcal{B}$ and $f_0\in E$, denote by $\mathbb{T}(E, f_0)$ the set of all $v\in \mathcal{B}$ for which there exists a sequence $f_n\in E\setminus \{f_0\}$ and $t_n\in \C$ such that $f_n\to f_0$ and 
$$t_n(f_n-f_0)\to v.$$
Note that when $E$ is an embedded submanifold of $\mathcal{B}$, $\mathbb{T}(E,f_0)$ is the tangent space of $E$ at $f_0$.

\begin{lemma}\label{lem:holomorphiczero} Let $\tau:\mathcal{W}\to \mathcal{B}_2$ be a complex analytic map with $\tau(0)=0$, where $\mathcal{W}$ is a neighborhood of the origin in a complex Banach space $\mathcal{B}_1$, and $\mathcal{B}_2$ is another complex Banach space. Assume that the following hold:
\begin{enumerate}
\item $\mathcal{B}_j$ can be written as the direct sum of two closed subspaces $\mathcal{B}_j=E_j\oplus F_j$, $j=1,2$, with $E_1=Ker(d\tau_0)$ and $F_2= Im(d\tau_0)$, and $\text{dim}(F_1)<\infty$;
\item $d(q\circ\tau)_0: F_1\to F_2$ is invertible, where $q: \mathcal{B}_2\to F_2$ is the natural projection;
\item $p(\mathbb{T}(\tau^{-1}(0),0))$ is dense in $E_1$, where $p:\mathcal{B}_1\to E_1$ is the natural projection.
\end{enumerate}
Then $\tau^{-1}(0)$ is an embedded complex analytic submanifold of $\mathcal{B}_1$ of codimension $\text{dim}(F_1)$ near $0$. 
\end{lemma}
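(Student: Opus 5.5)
The plan is to reduce to the finite-dimensional part of the equation and then use hypothesis (3) to show that the remaining equations are automatically satisfied on the resulting manifold. Write points of $\mathcal{B}_1$ as $x=e+\phi$ with $e\in E_1$, $\phi\in F_1$. Consider $\sigma:=q\circ\tau:\mathcal{W}\to F_2$. By (2), $\partial_\phi\sigma(0)=d(q\circ\tau)_0|_{F_1}$ is invertible. Also $F_2=Im(d\tau_0)$ is a closed subspace isomorphic to $F_1$, so it has finite dimension $\dim F_1$. The implicit function theorem in Banach spaces then gives a neighborhood $\mathcal{W}'=\mathcal{E}\times\mathcal{F}$ of $0$ and a holomorphic map $g:\mathcal{E}\to\mathcal{F}$ with $g(0)=0$ such that
$$M:=\sigma^{-1}(0)\cap\mathcal{W}'=\{e+g(e):e\in\mathcal{E}\}.$$
The derivative $dg_0=-(\partial_\phi\sigma(0))^{-1}\partial_e\sigma(0)$ vanishes, because $\partial_e\sigma(0)=q\circ d\tau_0|_{E_1}=0$ (recall $E_1=Ker(d\tau_0)$). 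Shrinking $\mathcal{W}'$, the map $\psi:=\sigma:\mathcal{W}'\to F_2\cong\C^{\dim F_1}$ has surjective derivative everywhere, since surjectivity is open. So $M$ is an embedded submanifold of codimension $\dim F_1$ in the sense defined after Theorem~\ref{thm:hybridclass}. It remains to show that $\tau^{-1}(0)\cap\mathcal{W}'=M$ after possibly shrinking further.

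The inclusion $\tau^{-1}(0)\cap\mathcal{W}'\subset M$ is trivial. For the converse, set $G(e):=(1-q)\tau(e+g(e))$. This is a holomorphic map $\mathcal{E}\to \ker q$, where $\ker q$ is the complementary subspace $E_2$. The goal is to show $G\equiv 0$ near $0$. The strategy is to show that the set $Z=\{e\in\mathcal{E}:G(e)=0\}=p(\tau^{-1}(0)\cap \mathcal{W}')$ contains a neighborhood of $0$. Hypothesis (3) says that the tangent cone of $Z$ at $0$, which is $p(\mathbb{T}(\tau^{-1}(0),0))$ because $p$ restricted to $M$ is a biholomorphism tangent to the identity, is dense in $E_1$.

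I would argue via Taylor expansion. Suppose $G\not\equiv0$ near $0$, and write $G=\sum_{k\ge1}G_k$ with $G_k$ homogeneous polynomials, letting $G_m$ be the first nonzero term. Take $v=\lim t_n e_n$ with $e_n\in Z$, $e_n\to0$. Then $|t_n|\to\infty$ unless $v=0$. From
$$0=t_n^mG(e_n)=G_m(t_ne_n)+O(|t_n|^m\|e_n\|^{m+1})$$
we get $G_m(v)=0$. Hence $G_m$ vanishes on a dense subset of $E_1$, and by continuity $G_m\equiv0$, a contradiction. Therefore $G\equiv0$ on a ball around $0$ in $E_1$, and $\tau^{-1}(0)$ coincides with $M$ near $0$.

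The main obstacle is making this Taylor argument rigorous in infinite dimensions. Each $G_k$ must be a continuous homogeneous polynomial, with the remainder bounded uniformly on a ball; this holds because holomorphic maps between Banach spaces are locally given by normally convergent series of bounded homogeneous polynomials. One must also check that the cone $\mathbb{T}$ is taken with complex scalars $t_n$, which the definition provides. If needed, an alternative is to restrict $G$ to complex lines through $0$ in a dense set of directions and use one-variable analyticity, but the direct lowest-order-term argument above should suffice.
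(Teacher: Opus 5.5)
Your proposal is correct and follows essentially the same route as the paper. The implicit function theorem applied to $q\circ\tau$ gives the graph $\{e+g(e)\}$. Then the lowest-order homogeneous term of $(1-q)\tau(e+g(e))$ (the paper's $\widetilde{\tau}$) is shown to vanish on the dense cone $p(\mathbb{T}(\tau^{-1}(0),0))$, which forces $(1-q)\tau(e+g(e))$ to be identically zero. The differences are only in presentation: the paper builds the homogeneous expansion by hand via Cauchy's integral formula along complex lines and proves only the inclusion of tangent cones it needs, whereas you quote the standard Banach-space power series and also check explicitly that $q\circ\tau$ is a defining map of rank $\dim F_1$.
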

\begin{proof} Identify $\mathcal{B}_j$ with the product $E_j\times F_j$, $j=1,2$. 
Applying the Implicit Function Theorem to the map $$q\circ \tau:E_1\times F_1\to F_2,$$ 
we obtain a holomorphic map $\sigma: U\to V$, where $U$ and $V$ are small balls centered at the origin in $E_1$ and $F_1$ respectively, such that for each $u\in U$, $\sigma(u)$ is the only point in $V$ satisfying $q\circ \tau (u,\sigma(u))=0$. 

Put $$\widetilde{\tau}(u):=\tau(u,\sigma(u)).$$ Let us show
\begin{equation}\label{eqn:tildetau}
\mathbb{T}(\widetilde{\tau}^{-1}(0),0)\supset p(\mathbb{T}(\tau^{-1}(0),0)).
\end{equation}

For $w\in \mathbb{T}(\tau^{-1}(0),0)$,
let $w_n\in \tau^{-1}(0)\setminus \{0\}$, $t_n\in \C$ be such that 
$w_n\to 0$ and $t_n w_n\to w$. Since $\tau^{-1}(0)\subset (q\circ \tau)^{-1}(0)$, $w_n\in (q\circ \tau)^{-1}(0)$. Thus $w_n$ can be written in the form $(u_n, \sigma(u_n))$ for some $u_n\in E_1$, provided that $n$ is large enough. 
Thus $u_n\in \widetilde{\tau}^{-1}(0)$ and $p(w)=\lim_{n\to\infty} t_nu_n\in E_1$. This shows that $p(w)\in \mathbb{T}(\widetilde{\tau}^{-1}(0),0)$.
%

We shall next show that 
$\widetilde{\tau}(u)=0$ holds for all $u\in U$. 
This will conclude the proof of the lemma since it implies 
$$\tau^{-1}(0)\cap (U\times F_1)=\{(u,\sigma(u)): u\in U\}.$$ 

Assume for a contradiction that $\widetilde{\tau}$ is not identically zero in $U$. Let $B(r)$ denote the ball in $E_1$ centered at $0$ of radius $r$. Let $r_0>0$ be a small number such that $U\supset B(3 r_0).$ Then for each $u\in B(r_0)$, by Cauchy's integral formula, we have 
$$\widetilde{\tau}(u)=\frac{1}{2\pi i} \int_{|\lambda|=2} \frac{\widetilde{\tau}(\lambda u)}{\lambda-1} d\lambda=\sum_{k=0}^\infty P_k(u),$$  
where 
$$P_k(u)=\frac{1}{2\pi i} \int_{|\lambda|=2} \lambda^{-k-1} \widetilde{\tau}(\lambda u) d\lambda.$$ 
Note that 
\begin{equation}\label{eqn:homoge}
P_k(tu)=t^k P_k(u)
\end{equation}
holds for all $u\in B(r_0)$ and $t\in \mathbb{D}$. It follows that $P_k$ extends to a holomorphic function in $E_1$ such that (\ref{eqn:homoge}) holds for all $u\in E_1$ and each $t\in \C$. In particular, $P_0(u)=P_0(0)=\widetilde{\tau}(0)=0$. Let $C$ be a positive constant such that $\|\widetilde{\tau}(u)\|\le C$ for all $u\in B(2r_0)$. Then $\|P_k(u)\|\le C$ for each $k=0,1,\ldots$ and $u\in B(r_0)$. By (\ref{eqn:homoge}), it follows that 
$$\|P_k(u)\|\le \frac{C}{r_0}\|u\|^k.$$
Since $\widetilde{\tau}$ is not identically zero, there is a minimal positive integer $k_0$ such that $P_{k_0}(u)$ is not identically zero, and $$\widetilde{\tau}(u)=P_{k_0}(u)+R(u),$$
where $$R(u)=\sum_{k=k_0+1}^\infty P_k(u)=O(\|u\|^{k_0+1}), \mbox{ as } u\to 0.$$

{\bf Claim.} For any $u\in \mathbb{T}(\widetilde{\tau}^{-1}(0),0)$, $P_{k_0}(u)=0$.

By definition, there exists $(u_n)_{n=1}^\infty \subset E_1\setminus \{0\}$ and $t_n\in \C$ such that $u_n\to 0$, $\widetilde{\tau}(u_n)=0$ and $t_n u_n\to u$. 
Then for all $n$ large, $$P_{k_0}(u_n)=\widetilde{\tau}(u_n)-R(u_n)=-R(u_n)=O(\|u_n\|^{k_0+1}).$$
By(\ref{eqn:homoge}), this implies that  
$$P_{k_0}\left(t_nu_n\right)=O(|t_n|^{k_0}\|u_n\|^{k_0+1})=O(\|t_nu_n\|^{k_0} \|u_n\|)=O(\|u_n\|).$$
Since $u_n\to 0$, by continuity,
$$P_{k_0}\left(u\right)=0.$$

Since $p(\mathbb{T}(\tau^{-1}(0),0)$ is dense in $E_1$, by (\ref{eqn:tildetau}), $\mathbb{T}(\widetilde{\tau}^{-1}(0),0)$ is dense in $E_1$. It follows from the claim that $P_{k_0}(u)=0$ for all $u\in E_1$, contradicting the fact that $P_{k_0}$ is not identically zero. 
\end{proof}

The `mating' map is used in the proof of the following lemma. 
\begin{lemma}\label{lem:mating0}
Let $f_0\in\mathcal{Q}_{W_0}$ for some open set $W_0\subset \C$. For any open subset $W$ of $\C$ with $K_*(f_0)\subset W\Subset W_0$, there is an open subset $\widetilde{W}$ of $\C$
with $K_*(f_0)\subset \widetilde{W}\Subset W$ and a neighborhood $\mathcal{V}$ of $f_0|_W$ in $\mathcal{B}_W$ such that 
the following hold:
\begin{itemize}
\item $\widetilde{W}$ is a finite union of Jordan disks with pairwise disjoint closures in $\C$;
\item for any neighborhood $\widetilde{\mathcal{U}}$ of $f_0|_{\widetilde{W}}$ in $\mathcal{Q}_{\widetilde{W}}$, if $f\in \mathcal{Q}_{W_0}$, $f|_W\in \mathcal{V}$, $f|_{\widetilde{W}}\in \widetilde{\mathcal{U}}$ and $d(j_{W}f)=d(j_{W_0}f_0)$, then 
$\mathbb{T}(\mathcal{H}_*(f,\widetilde{\mathcal{U}}), f|_{\widetilde{W}})$ is dense in $E^h(f, \widetilde{W})$;
\item  
    if $f\in \mathcal{Q}_{W_0}$, $f|_W\in \mathcal{V}$, $d(j_{W_0}f)=d(j_{W_0}f_0)$ and $\mathbb{T}(\mathcal{H}_*(f,\mathcal{V}), f|_W)$ is dense in $E^h(f,W)$, then $\mathcal{H}_*(f|_W,\mathcal{V})$ is an embedded complex analytic submanifold of $\mathcal{B}_W$ with codimension $d(j_{W_0}f)$ near $f|_W$.
\end{itemize}
\end{lemma}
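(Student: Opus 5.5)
The plan is to build the `mating' map and apply Lemma~\ref{lem:holomorphiczero} for the third bullet, and to use the infinitesimal version of the mating map, together with the decomposition of Theorem~\ref{thm:codimEh0} and Lemma~\ref{lem:dense}, for the second bullet.

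\emph{Choices and construction.} First take a GPL representative $f_0:U\to V$ with $\overline{V}\subset W$ and no critical points in $\overline{U}\setminus K_*(f_0)$, as in Lemma~\ref{lem:smallglp}. Choose $\widetilde{W}$ to be a finite union of Jordan disks with disjoint closures, $K_*(f_0)\subset\widetilde{W}\Subset U$, for instance a deep preimage of $V$ slightly thickened. Next take $\mathcal{V}$ to be a small equipped neighborhood of $f_0|_W$ (Definition~\ref{dfn:equipped}) with holomorphic motion $h_g$.

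\emph{Definition of $I^f$.} For $f$ in the criticality stratum and $g\in\mathcal{V}$, consider the Beltrami differential $\mu_g$ defined as follows:
\begin{itemize}
\item on $\C\setminus V$ it equals the dilatation of $h_g\circ h_f^{-1}$;
\item it is pulled back by $f$ through the annuli $V^n_f\setminus V^{n+1}_f$;
\item it is $0$ on $K_*(f)$.
\end{itemize}
Straighten $\mu_g$ by the normalized solution $\phi_g$ of the Beltrami equation. Then $\phi_g\circ f\circ\phi_g^{-1}$ is holomorphic near $\phi_g(K_*(f))$ and agrees with $g$ near the boundary. Since $\mu_g$ depends holomorphically on $g$ (Bers--Royden), the restriction $I^f(g):=(\phi_g\circ f\circ\phi_g^{-1})|_{\widetilde{W}}$ is a holomorphic map $\mathcal{V}\to\mathcal{B}_{\widetilde{W}}$, after shrinking $\mathcal{V}$ so that $\phi_g$ is close to the identity. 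It satisfies $I^f(g)=g|_{\widetilde{W}}$ exactly when $g$ is in the marked hybrid class $\mathcal{H}_\#(f,\mathcal{V})$. Indeed, if $g$ is in that class we may take the marking conjugacy, and uniqueness of the straightening gives equality; conversely, equality produces the conjugacy. By Proposition~\ref{prop:connected set}, $\mathcal{H}_*(f,\mathcal{V})$ is the component through $f$ of the zero set of $\tau(g):=I^f(g)-g|_{\widetilde{W}}$.

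\emph{Derivative of $\tau$.} Differentiate at $g=f$ in a direction $v$. The infinitesimal dilatation is an $f$-invariant $\bar\partial$-datum vanishing on $K_*(f)$, so the derivative of $I^f$ is $v\mapsto \beta\circ f-f'\beta$ restricted to $\widetilde{W}$, where $\beta$ is built exactly as in Steps 2--4 of the proof of Lemma~\ref{lem:Lyudecomp}. Hence $dI^f_f=\iota\circ P^h_f$ and
\[
d\tau_f=-\iota\circ P^{vert}_f ,
\]
where $\iota$ is restriction to $\widetilde{W}$. Thus $\ker d\tau_f=E^h(f,W)$, and $F_2=\iota E^{vert}_*(f,W)$, which is complemented by $E^h(f,\widetilde{W})$ via Theorem~\ref{thm:codimEh0}; restriction is injective on the vertical space by Lemma~\ref{lem:verticalgamma}. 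The dimension is $d(j_{W_0}f)$ by Theorem~\ref{thm:dimension} and Theorem~\ref{thm:continuity}. Conditions (1) and (2) of Lemma~\ref{lem:holomorphiczero} then hold with $E_1=E^h(f,W)$ and $F_1=E^{vert}_*(f,W)$, and condition (3) is exactly the density hypothesis in the third bullet. This gives the third bullet.

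\emph{Second bullet.} Given $u\in E^h(f,W)$ we want curves in the hybrid class tangent to $u$. Write $u=\beta\circ f-f'\beta$ with $\bar\partial\beta=0$ on $K_*(f)$. Set $\mu_t=t\,\bar\partial\beta$, and let $\phi_t$ be the normalized solution. The maps $g_t=(\phi_t\circ f\circ\phi_t^{-1})|_{\widetilde{W}}$ lie in $\mathcal{H}_*(f,\widetilde{\mathcal{U}})$ for small $t$: they form a connected curve of hybrid conjugates, which is why $\widetilde{W}$ is shrunk so that the maps are defined there. Their derivative at $t=0$ is $u|_{\widetilde{W}}$ up to a holomorphic correction. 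So $\mathbb{T}$ contains the restrictions of $E^h(f,W)$, and these are dense in $E^h(f,\widetilde{W})$ by Lemma~\ref{lem:dense}.

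\emph{Main obstacle.} The hardest part is that $\beta$ is only a qc vector field on $\C$, not holomorphic near $\partial W$. The family $\phi_t\circ f\circ\phi_t^{-1}$ is therefore defined only on $\phi_t$-images of domains, and these must contain $\widetilde{W}$ uniformly in $t$. The derivative must also be computed in the sup norm on $\overline{\widetilde{W}}$, which needs holomorphic dependence of $\phi_t$ on compacts where $\mu_t$ vanishes; this holds near $K_*(f)$ only after modifying $\beta$. I would first try to replace $\beta$ by a compactly supported modification that equals $\beta$ near $K_*(f)$, which changes $u$ only by a term that is holomorphic near $\widetilde{W}$.
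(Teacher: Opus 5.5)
Your treatment of the third bullet follows the paper. You use the same mating map $I^f$, the same derivative formula $dI^f_f=\iota\circ P^h_f$ (the paper's Claim~2), and the same application of Lemma~\ref{lem:holomorphiczero} with $E_1=E^h(f,W)$ and $F_1=E^{vert}_*(f,W)$.

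Two small corrections there:
\begin{itemize}
\item $\mu_g$ must equal the dilatation of $h_g\circ h_f^{-1}$ on all of $\C\setminus U_f$, not only on $\C\setminus V$, and be pulled back only into $U_f$. Otherwise the marked conjugacy and $\phi_g$ no longer share a dilatation, and Claim~1 fails.
\item You have not justified that $I^f(g)=g|_{\widetilde W}$ forces $g\in\mathcal{H}_\#(f,\mathcal V)$, and you do not need it. Equality only shows that $\phi_g$ is a hybrid conjugacy, i.e.\ $\tau^{-1}(0)\subset\mathcal{H}(f,\mathcal V)$. Together with $\mathcal{H}_*(f,\mathcal V)\subset\mathcal{H}_\#(f,\mathcal V)\subset\tau^{-1}(0)$ and connectedness of the local graph, that suffices.
\end{itemize}

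\textbf{The gap: the second bullet.} For $u\in E^h(f,W)$, the definition only provides a $\beta$ with $u=\beta\circ f-f'\beta$ on some neighborhood $P$ of $K_*(f)$, and $P$ depends on $u$. So $\bar\partial\beta$ is $f$-invariant only on $P$. Consequently $\phi_t\circ f\circ\phi_t^{-1}$ is holomorphic only on roughly $\phi_t(P\cap f^{-1}P)$. Since $\widetilde W$ was fixed before $f$ and $u$, your $g_t$ need not be an element of $\mathcal{B}_{\widetilde W}$ at all.

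Your proposed repair does not fix this. A cutoff $\beta'$ equal to $\beta$ near $K_*(f)$ makes $\bar\partial\beta'$ compactly supported, but it destroys invariance outside that small neighborhood. So $\beta'\circ f-f'\beta'$ is \emph{not} holomorphic near $\widetilde W$, contrary to what you assert.

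\textbf{What is needed.} You need a representative of $u$ whose $\bar\partial$ is $f$-invariant on all of $V_f$. Choose it on the fundamental annulus and pull it back, i.e.\ run Steps~2--4 of Lemma~\ref{lem:Lyudecomp} on $u$.
\begin{itemize}
\item The resulting $\widetilde u=\beta\circ f-f'\beta$ satisfies $u-\widetilde u\in E^h(f,U_f)\cap E^{vert}_*(f,U_f)=\{0\}$. Hence this $\beta$ represents $u$ on all of $U_f$.
\item This $\beta$ vanishes at $\infty$ and $\bar\partial\beta$ is supported in $\overline{V_f}$. Hence $\frac{d}{dt}\phi_t|_{t=0}=\beta$ exactly, with no holomorphic correction.
\end{itemize}

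Equivalently, and this is what the paper does, take $g_t=I^f(f+tu)$. This family is defined on $\widetilde W$ and is hybrid conjugate to $f$ by construction. It forms a connected curve through $f|_{\widetilde W}$, so it lies in $\mathcal{H}_*(f|_{\widetilde W},\widetilde{\mathcal U})$. By the derivative formula you already established for the third bullet, $\frac{d}{dt}g_t|_{t=0}=P^h_f(u)|_{\widetilde W}=u|_{\widetilde W}$. Lemma~\ref{lem:dense} then finishes the argument as you intended.
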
 

\begin{proof} 
Let $f_0:U_{f_0}\to V_{f_0}$ be a GPL representative of $j_{W_0}f_0$ with $U_{f_0}\Subset W$ and let $\widetilde{W}=(f_0|_{U_{f_0}})^{-1} (U_{f_0})$. Then $\widetilde{W}$ is an open subset of $\C$ with $K_*(f_0)\subset \widetilde{W}\Subset U_{f_0}$ and the complement of the closure of $\widetilde{W}$ is connected in $\C$. Let $\mathcal{U}$ be a small neighborhood of $f_0$ in $\mathcal{Q}_W$ that can be equipped as in Definition~\ref{dfn:equipped}. So there is a holomorphic motion $h_g$ of $\C$ over $(\mathcal{U}, f_0|_W)$ such that for each $g\in \mathcal{U}$, $U_g:=h_g(U_{f_0}),$ $V_g:=h_g(V_{g_0})$, $g:U_g\to V_g$ is a representative of $j_{W}g$ and $h_g\circ f_0=g\circ h_g$ on $\partial U_{f_0}$.

We shall use the `mating' procedure in~\cite{DH2} to construct a GPL $$i^f(g): U'_g\to V'_g$$ for each $f,g\in \mathcal{U}$. The procedure goes as follows.
Let $\nu_{f,g}$ denote the complex dilatation of the qc map $h_g\circ h_f^{-1}|_{\C\setminus U_f}$ and define a Beltrami differential $\mu_g$ in $\mathbb{C}$ by taking
\begin{itemize}
\item $\mu_{f,g}=\nu_{f,g}$ outside $U_f$;
\item $\mu_{f,g}=(f^n)^*(\nu_{f,g})$ on $(f|_{U_f})^{-n}(V_{f}\setminus U_f)$; 
\item $\mu_{f,g}=0$ on $K_*(f)$.
\end{itemize}
Then $f^*(\mu_{f,g}|_{V_{f}})=\mu_{f,g}|_{U_f}$.
Let $\varphi_{f,g}:\mathbb{C}\to \mathbb{C}$ be the qc map with complex dilatation $\mu_{f,g}$, normalized so that $\varphi_{f,g}(z)=z+o(1)$ near infinity, and let  
$$i^f(g)(z)=\varphi_{f,g}\circ f\circ (\varphi_{f,g})^{-1}, \text{ with }U'_{g}=\varphi_{f,g}(h_f(U_{f_0})), V'_{f,g}=\varphi_{f,g}(h_f(V_{f_0})).$$

In general, $U'_g\not\supset W$, so the map $i^f(g)$ is not well-defined on $W$. But by shrinking $\mathcal{U}$ to a smaller neighborhood $\mathcal{V}$, it is  well-defined on $\widetilde{W}$.
To see this, let $C_0>1$ be a constant such that for any $C_0$-qc map $\varphi:\C\to \C$ which is holomorphic in a neighborhood of $\infty$ and satisfies $\varphi(z)=z+o(1)$ near infinity, we have $\varphi(U_0)\Supset\widetilde{W}$. 
Let $\mathcal{V}\subset \mathcal{U}$ be a small ball in $\mathcal{B}_W$ centered at $f_0|_W$ such that the maximal dilatation of $h_g$ is less than $C_0^{1/3}$ for each $g\in \mathcal{U}$. Then for each $f,g\in \mathcal{U}$, $\varphi_{f,g}\circ h_f$ has maximal dilatation less than $C_0$, so $U'_{g}\Supset \widetilde{W}$. Therefore, 
$$I^f(g)=i(g)|_{\widetilde{W}}$$
defines a map from $\mathcal{V}$ into $\mathcal{B}_{\widetilde{W}}$ for each $f\in \mathcal{V}$. 

By the Measurable Riemann Mapping Theorem, $g\mapsto \varphi_{f,g}(z)$ is holomorphic for each $z\in \C$, so $I^f$ is holomorphic. 



Let us consider the set 
$$\mathcal{X}_f=\{g\in \mathcal{V}: I^f(g)=g|_{\widetilde{W}}\}.$$

{\bf Claim 1.} $\mathcal{H}_*(f, \mathcal{V})\subset \mathcal{X}_f$. 

Indeed, by Proposition~\ref{prop:connected set}, $\mathcal{H}_*(f,\mathcal{V})\subset \mathcal{H}_*(f,\mathcal{U})$ is contained in $\mathcal{H}_\#(f,\mathcal{U})$. 
So for each $g\in \mathcal{H}_*(f,\mathcal{V})$, $h_g\circ h_f^{-1}$ extends to a qc map $\psi$ which is a hybrid conjugacy between $f: U_f\to V_f$ and $g: U_g\to V_g$. Since the qc maps $\psi$ and $\varphi_{f,g}$ have the same complex dilatation 
and they are both normalized at infinity, we have $\psi\equiv \varphi_{f,g}$, which implies that $I^f(g)=g|_{\widetilde{W}}$. This proves the claim.

In particular, we have 
\begin{equation}\label{eqn:If0}
I^f(f)=f|_{\widetilde{W}}.
\end{equation}

Let us calculate the differential $D I_f$ at the point $g=f$. Assume $f\in \mathcal{Q}_{W_0}$ and $f|_W\in \mathcal{V}$. Let $P^h_f:\mathcal{B}_{W}\to E^h(f,W)$ denote the projection associate to the direct sum decomposition $\mathcal{B}_{W}=E^h(f,W)\oplus E^{vert}_*(f,W)$. Define $\widetilde{P}^h_f$ similarly with $W$ replaced by $\widetilde{W}$.

{\bf Claim 2.} Assume that $f\in\mathcal{Q}_{W_0}$, $f|_W\in \mathcal{V}$ and $d(j_Wf)=d(j_{W_0}f_0)$. Then for each $v\in \mathcal{B}_W$, 
\begin{equation}\label{eqn:derivative}
DI^f(f,v)=P^h_f(v)|_{\widetilde{W}}.
\end{equation}

To see this, let $$\alpha(z)=\left.\frac{d}{dt} (h_{f+tv}\circ h_f^{-1}(z))\right|_{t=0}$$
for $z\in \mathbb{C}\setminus U_f$, and let 
$$\beta(z)=\left.\frac{d}{dt}\varphi_{f,f+tv}(z)\right|_{t=0}$$
for each $z\in\C$. 
Then $\alpha(z)/dz$ is a qc vector field on $\mathbb{C}\setminus K_*(f)$ with $\alpha(z)=0$ for all $z\not\in V_f$ and $\alpha(f(z))-f'(z)\alpha(z)=v(z)$ for all $z\in \partial U_f$. 
Since $h_{f+tv}\circ h_f^{-1}$ and $\varphi_{f, f+tv}$ has the same complex dilatation on $\mathbb{C}\setminus U_f$, we have $\bar{\partial} \beta(z)=\bar{\partial} \alpha(z)$ for $z\in \mathbb{C}\setminus U_f$. Since $\varphi_{f, f+tv}$ is a hybrid conjugacy between $f:U_f\to V_f$ and $i^f(F_t)$, where $F_t=f+tv$, we have that $\bar{\partial }\beta(z)$ is $f$-invariant, 
and $\bar{\partial} \beta=0$ a.e. on $K_*(f)$. Since $d(j_{W}f)=d(j_{W_0}f_0)$, $f:U_f\to V_f$ is a GPL representative of $j_{W}f$ with no critical point in $\overline{U_f}\setminus K_*(f)$. By Lemma~\ref{lem:Lyudecomp},
we have $$P^h_f(v)(z)=\beta(f(z))-f'(z)\beta(z), \,\, z\in U_f.$$
Since $$DI^f_{f}(v)(z)=\left.\frac{d}{dt}\right|_{t=0} I^f(f+tv)(z)=\beta(f(z))-f'(z) \beta(z),$$
(\ref{eqn:derivative}) holds. 

(i) Fix a neighborhood $\widetilde{\mathcal{U}}$ of $f_0|_{\widetilde{W}}$ in $\mathcal{B}_{\widetilde{W}}$. Assume that $f\in \mathcal{Q}_{W_0}$, $f|_W\in \mathcal{V}$, $f|_{\widetilde{W}}\in \widetilde{\mathcal{U}}$ and $d(j_{W}f)=d(j_{W_0}f_0)$. For each $v\in E^h(f, W)$, there exists $r>0$ such that $g_t :=I^{f|_W}(f|_W+tv)$, $|t|<r$, is a holomorphic family in $\widetilde{\mathcal{U}}$.
So $g_t\in \mathcal{H}_*(f|_{\widetilde{W}}, \widetilde{\mathcal{U}})$. By Claim 2, 
$$\frac{g_t-g_0}{t}\to DI^{f_W}_{f|_W}(v)= v|_{\widetilde{W}}.$$ 
Thus $$\mathbb{T}(\mathcal{H}_*(f|_{\widetilde{W}},\widetilde{\mathcal{U}}), f|_{\widetilde{W}})\supset \{v|_{\widetilde{W}}: v\in E^h(f,W)\}.$$
By Lemma~\ref{lem:dense}, this set is dense in $E^h(f,\widetilde{W})$.

(ii) Now assume $f\in \mathcal{Q}_{W_0}$, $f|_W\in \mathcal{V}$ and $d(j_{W}f)=d(j_{W_0}f_0)$. Let us apply Lemma~\ref{lem:holomorphiczero} to the map 
$$\tau(v)=I^{f|_W}(f|_W+v)-(f+v)|_{\widetilde{W}}.$$
Put $\mathcal{B}_1=\mathcal{B}_{W}$, $\mathcal{B}_2=\mathcal{B}_{\widetilde{W}}$, $\mathcal{W}=\{g-f|_W:g\in \mathcal{V}\}$, $E_1=E^h(f,W), F_1=E^{vert}_*(f,W), E_2=E^h(f,\widetilde{W}), F_2=E^{vert}_*(f,\widetilde{W}).$
Then 
\begin{itemize}
\item The condition (1) follows from Theorem~\ref{thm:codimEh0}.
\item The condition (2) follows from Claim 2: By Claim 2,  $d(q\circ \tau) (v)=-v|_{\widetilde{W}}$ for each $v\in F_1$.
\item The condition (3) holds. By Claim 1, $\mathbb{T}(\tau^{-1}(0),0)$ contains $\mathcal{H}_*(f|_W, \mathcal{V})$, which is dense in $E_1$ by the assumption. 
\end{itemize}
So all the conditions in Lemma~\ref{lem:holomorphiczero} are satisfied,  which implies that $\tau^{-1}(0)$ is an embedded complex analytic submanifold of $\mathcal{B}_W$ of codimension $d(j_Wf)$ near $f|_W$. Locally $\tau^{-1}(0)$ coincide with $\mathcal{H}_*(f|_W, \mathcal{V})$ by definition of $\tau$ and Claim 1. 
\end{proof}

\begin{corollary}\label{cor:mate}
Let $f_0\in \mathcal{Q}_{W_0}$ for some open subset $W_0$ of 
$\C$. Then there is a neighborhood $\mathcal{V}_0$ of $f_0$ in $\mathcal{Q}_{W_0}$, an open subset $\widetilde{W}$ of $\C$ with $K_*(f_0)\subset \widetilde{W}\Subset W_0$ and a neighborhood $\widetilde{\mathcal{V}}$ of of $f_0|_{\widetilde{W}}$ in $\mathcal{Q}_{\widetilde{W}}$ with the following property: 
\begin{enumerate}
\item [(i)] $\widetilde{W}$ is a finite union of Jordan disks with pairwise disjoint closures in $\C$; 
\item [(ii)] if $f\in \mathcal{V}_{W_0}$ and $d(j_{W_0}f)=d(j_{W_0}f_0)$, then $f|_{\widetilde{W}}\in \widetilde{\mathcal{V}}$ and $\mathcal{H}_*(f|_{\widetilde{W}},\widetilde{\mathcal{V}})$ is a complex analytic submanifold of $\mathcal{B}_{\widetilde{W}}$ of codimension $d(j_{W_0}f_0)$ near $f_{\widetilde{W}}$.
\end{enumerate}
\end{corollary}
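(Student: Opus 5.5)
We apply Lemma~\ref{lem:mating0} twice, at two nested scales, so that the density hypothesis of its third bullet is supplied by its second bullet taken at the larger scale.

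First, choose open sets $W_1$, $W_2$ with $K_*(f_0)\subset W_2\Subset W_1\Subset W_0$. Apply Lemma~\ref{lem:mating0} to $f_0$ with $W=W_1$. This gives an open set $W_1'\Subset W_1$ with $K_*(f_0)\subset W_1'$, which is a finite union of Jordan disks, and a neighborhood $\mathcal{V}^{(1)}$ of $f_0|_{W_1}$ in $\mathcal{B}_{W_1}$. The second bullet of the lemma then says: for every neighborhood $\widetilde{\mathcal U}$ of $f_0|_{W_1'}$, and every $f\in\mathcal Q_{W_0}$ with $f|_{W_1}\in\mathcal V^{(1)}$, $f|_{W_1'}\in\widetilde{\mathcal U}$ and the same criticality, the set $\mathbb T(\mathcal H_*(f,\widetilde{\mathcal U}),f|_{W_1'})$ is dense in $E^h(f,W_1')$.

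Next, we apply Lemma~\ref{lem:mating0} again, now with $W_1'$ in the role of $W_0$ and a new set $W$ with $K_*(f_0)\subset W\Subset W_1'$. Note that $f_0|_{W_1'}\in\mathcal Q_{W_1'}$, using Proposition~\ref{prop:cpxstr} and the fact that $W_1'$ contains $K_*(f_0)$ together with a GPL restriction. This produces $\widetilde W\Subset W$, again a finite union of Jordan disks with disjoint closures, and a neighborhood $\mathcal V^{(2)}$ of $f_0|_W$ in $\mathcal B_W$. Its third bullet, with $W$ as the ambient set, says: if $\mathbb T(\mathcal H_*(f,\mathcal V^{(2)}),f|_W)$ is dense in $E^h(f,W)$, then $\mathcal H_*(f|_W,\mathcal V^{(2)})$ is a submanifold of codimension $d$ near $f|_W$.

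This is where the scales are mismatched, and it is the main obstacle. The first step gives density on $W_1'$, but the conclusion we want is on the inner set. To align them, I would rechoose the first application so that the output of step one serves directly as the set $W$ of step two. That is, take $W:=W_1'$ and $\widetilde{\mathcal U}:=\mathcal V^{(2)}$ (a neighborhood in $\mathcal B_{W_1'}$). Since Lemma~\ref{lem:mating0}(ii) holds for \emph{any} neighborhood $\widetilde{\mathcal U}$, this choice is allowed. The second application, performed on the ambient domain $W_1'$ with $W=W_1'$, must satisfy $W\Subset$ ambient. To keep that valid, I would insert one more level: apply the lemma to $(W_0,W_1)$ to get $W_1'$, pick $W_1''$ with $K_*(f_0)\subset W_1''\Subset W_1'$, and use density on $W_1''$ inherited by restriction via Lemma~\ref{lem:dense}. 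Restrictions of dense horizontal tangent vectors stay dense in $E^h(f,W_1'')$. Also, restricting $\mathcal H_*$ maps into the corresponding $\mathcal H_*$ for the restricted neighborhood, because restriction is continuous and preserves hybrid classes (Proposition~\ref{prop:cpxstr}(3)). Checking the density transfer $\{v|_{W''}:v\in E^h(f,W')\}$ is exactly Lemma~\ref{lem:dense}, which needs the Jordan-disk structure furnished by bullet one.

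Finally, set $\widetilde{\mathcal V}:=\mathcal V^{(2)}\cap\mathcal Q_{\widetilde W}$ at the innermost scale. Let $\mathcal V_0$ be a neighborhood of $f_0$ in $\mathcal Q_{W_0}$, small enough by continuity of restriction maps that $f\in\mathcal V_0$ forces all the intermediate restrictions into the neighborhoods above. Any $f\in\mathcal V_0$ with $d(j_{W_0}f)=d(j_{W_0}f_0)$ then satisfies the hypotheses of the third bullet, giving the asserted submanifold of codimension $d(j_{W_0}f_0)$, where Theorem~\ref{thm:dimension} identifies $D$ with $d$. The delicate points are bookkeeping: each set is compactly contained in the previous one, the neighborhoods are chosen consistently, and the criticality is the same at every scale, since $K_*$ does not change under restriction.
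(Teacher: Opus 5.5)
Your key move is exactly the paper's proof: let the output set of the first application of Lemma~\ref{lem:mating0} serve as the input set $W$ of the second, and feed the neighborhood from the second application back in as $\widetilde{\mathcal U}$ in the density bullet of the first. The ``scale mismatch'' you then run into is not real. Lemma~\ref{lem:mating0} only requires $K_*(f_0)\subset W\Subset W_0$, and $W_1'\Subset W_1\Subset W_0$, so you can apply it a second time with the \emph{same} ambient set $W_0$ and $W:=W_1'$. This yields a neighborhood $\widetilde{\mathcal V}$ of $f_0|_{W_1'}$ in $\mathcal B_{W_1'}$ whose third-bullet conclusion lives on $W_1'$ itself. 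Now take $\widetilde{\mathcal U}=\widetilde{\mathcal V}$ in the first application, and shrink $\mathcal V_0$ so that $K_*(f)\subset W_1'$, $f|_{W_1}\in\mathcal V^{(1)}$ and $f|_{W_1'}\in\widetilde{\mathcal V}$. This finishes the proof with $\widetilde W:=W_1'$, which already satisfies (i). Replacing the ambient domain by $W_1'$ is what forced your extra level.

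The extra level via Lemma~\ref{lem:dense} can be made to work, but as written the assembly is wrong. The third bullet of Lemma~\ref{lem:mating0} gives a submanifold at the \emph{input} scale of that application, not at its output set. Your final choice $\widetilde{\mathcal V}:=\mathcal V^{(2)}\cap\mathcal Q_{\widetilde W}$ pairs $\mathcal V^{(2)}\subset\mathcal B_W$ with the output $\widetilde W$ of the second application, where the lemma says nothing. So the corollary's $\widetilde W$ must be $W_1''$.

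Then both (i) and the hypothesis of Lemma~\ref{lem:dense} require $W_1''$ itself to be a finite union of Jordan disks with pairwise disjoint closures. The first application supplies that structure only for $W_1'$, so you must choose $W_1''$ this way explicitly.

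For the restriction step, take $\widetilde{\mathcal U}$ to be the preimage of $\mathcal V^{(2)}$ under restriction to $W_1''$. Then restriction maps the connected set $\mathcal H_*(f|_{W_1'},\widetilde{\mathcal U})$ into $\mathcal H_*(f|_{W_1''},\mathcal V^{(2)})$. Use the identity theorem to see that $f_n\neq f$ on $W_1'$ forces $f_n\neq f$ on $W_1''$. With these repairs the detour is valid, but it buys nothing over the direct argument.
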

\begin{proof}  Take an open subset $W$ of $\C$ with $K_*(f_0)\subset W\Subset W_0$ arbitrarily.   Applying Lemma~\ref{lem:mating0}, we obtain a neighborhood $\mathcal{V}$ of $f_0|_W$ in $\mathcal{B}_W$ and an open subset of $\C$ with $K_*(f_0)\subset \widetilde{W}\Subset W$ such that (i) and the following (iii) hold: 
\begin{enumerate}
\item [(iii)] for any neighborhood $\mathcal{\widetilde{U}}$ of $f_0|_{\widetilde{W}}$, if $f\in \mathcal{Q}_{W_0}$, $f|_W\in \mathcal{V}$, $f|_{\widetilde{W}}\in \mathcal{\widetilde{U}}$ and $d(j_{W}f)=d(j_{W_0}f_0)$, then 
$\mathbb{T}(\mathcal{H}_*(f|_{\widetilde{W}},\widetilde{\mathcal{U}}), f|_{\widetilde{W}}$ is dense in $E^h(f, \widetilde{W})$. 
\end{enumerate}

Since $K_*(f_0)\subset \widetilde{W}\Subset W_0$, we may apply the lemma with $W$ replaced by $\widetilde{W}$ to obtain a neighborhood $\widetilde{\mathcal{V}}$ of $f_0|_{\widetilde{W}}$ in $\mathcal{B}_{\widetilde{W}}$ with the following property: 
\begin{enumerate}
\item[(iv)] If $f\in \mathcal{Q}_{W_0}$, $f|_{\widetilde{W}}\in \widetilde{\mathcal{V}}$ and $d(j_{\widetilde{W}}f)=d(j_{W_0}f_0)$ and $\mathbb{T}(\mathcal{H}_*(f|_{\widetilde{W}},\widetilde{\mathcal{V}}), f|_{\widetilde{W}})$ is dense in $E^h(f,\widetilde{W})$, then $\mathcal{H}_*(f|_{\widetilde{W}},\widetilde{\mathcal{V}})$ is a complex analytic submanifold of $\mathcal{B}_{\widetilde{W}}$ of codimension $d(j_{W_0}f)$ near $f$.  
\end{enumerate}
By Lemma~\ref{lem:K*cont}, there is a neighborhood $\mathcal{V}_0$ of $f_0$ in $\mathcal{Q}_{W_0}$ such that $f\in \mathcal{V}_0$ implies $K_*(f)\subset \widetilde{W}$. Shrinking $\mathcal{V}_0$ if necessary, we may assume $f\in \mathcal{V}_0$ implies that $f|_W\in \mathcal{V}$ and $f|_{\widetilde{W}}\in \widetilde{\mathcal{V}}$. 

Now assume $f\in \mathcal{V}_0$ and $d(j_{W_0}f)=d(j_{W_0}f_0)$. Since $K_*(f,W_0)\subset \widetilde{W}\subset W$, $d(j_{\widetilde{W}}f)=d(j_Wf)=d(j_{W_0}f)$.  
Applying (iii) to $\widetilde{\mathcal{U}}=\widetilde{\mathcal{V}}$, we obtain that $\mathbb{T}(\mathcal{H}_*(f|_{\widetilde{W}},\widetilde{\mathcal{V}}), f|_{\widetilde{W}})$ is dense in $E^h(f,\widetilde{W})$. Thus by (iv), $\mathcal{H}_*(f|_{\widetilde{W}},\widetilde{\mathcal{V}})$ is a complex analytic submanifold of $\mathcal{B}_{\widetilde{W}}$ of codimension $d(j_{W_0}f)$ near $f$.  
\end{proof}

\begin{proof} [Proof of Theorem~\ref{thm:hybridclass}]
Let $f_0\in \mathcal{Q}_{W_0}$ and let $d=d(j_{W_0}f_0)$. By Corollary~\ref{cor:mate}, there is a neighborhood $\mathcal{V}_0$ of $f_0$ in $\mathcal{Q}_{W_0}$, a neighborhood $\widetilde{W}$ of $K_*(f_0)$ in $\C$ with $\widetilde{W}\Subset W_0$ and a neighborhood $\widetilde{\mathcal{V}}$ of $f_0|_{\widetilde{W}}$ in $\mathcal{B}_{\widetilde{W}}$ such that if $f\in \mathcal{V}_{0}$ and $d(j_{W_0}f)=d(j_{W_0}f_0)$, then $f|_{\widetilde{W}}\in \mathcal{Q}_{\widetilde{W}}$ and $\mathcal{H}_*(f|_{\widetilde{W}}, \widetilde{\mathcal{V}})$ is an embedded complex analytic submanifold of $\mathcal{B}_{\widetilde{W}}$ of codimension $d(j_{W_0}f)$. Moreover, the complement of the closure of $\widetilde{W}$ in $\C$ is connected.

Let $\mathcal{V}$ be an arbitrary open subset of $\mathcal{V}_0$ which contains $f_0$.
Let us prove that for any $f\in \mathcal{V}$ with $d(j_{W_0}f)=d(j_{W_0}f_0)$,  $\mathcal{H}_*(f, \mathcal{V})$ is an embedded submanifold of $\mathcal{B}_W$ with codimension $d(j_{W_0}f)$ in a neighborhood of $f$.

By Mergelyan's Theorem, the natural inclusion map $\iota:\mathcal{B}_{W_0}\to \mathcal{B}_{\widetilde{W}}$ has dense image.  Let $\widetilde{E}:=\mathcal{H}_*(f|_{\widetilde{W}},\widetilde{\mathcal{V}})$ and $E:=\iota^{-1}(\widetilde{E})\cap \mathcal{V}$. By definition of an embedded submanifold, there is a holomorphic map $\psi$ from a neighborhood $\widetilde{\mathcal{U}}$ of $f|_{\widetilde{W}}$ in $\mathcal{B}_{\widetilde{W}}$ to $\C^d$, such that $\psi(0)=f|_{\widetilde{W}}$, $D\psi_g$ has rank $d$ for each $g\in \widetilde{\mathcal{U}}$ and $\psi^{-1}(0)=\widetilde{E}\cap \widetilde{\mathcal{U}}$. Let $\mathcal{U}=\mathcal{V}\cap \iota^{-1}(\widetilde{\mathcal{U}})$ which is an open set in $\mathcal{Q}_{W_0}$ containing $f_0$. Then $\Psi=\psi\circ \iota$ is a holomorphic map from $\mathcal{U}$ to $\C^d$. For any $g
\in \mathcal{U}$, $d\Psi_g=d\psi_{g|_{\widetilde{W}}} \circ \iota$. 
Since $\iota(\mathcal{B}_{W_0})$ is dense in $\mathcal{B}_{\widetilde{W}}$, $d\Psi_g(\mathcal{B}_W)$ is dense in $d\psi_{g|_{\widetilde{W}}}(\mathcal{B}_{\widetilde{W}})=\C^d$. Therfeore, $d\Psi_g(\mathcal{B}_W)=\C^d$. This shows that $\iota^{-1}(\widetilde{E})\cap \mathcal{U}$ is an embedded submanifold of $\mathcal{B}_W$ of codimension $d$. Shrinking $\mathcal{U}$ if necessary, we may assume that $E\cap \mathcal{U}=\iota^{-1}(\widetilde{E})\cap\mathcal{U}$ is connected. Then $E\cap \mathcal{U}\subset \mathcal{H}_*(f,\mathcal{V})\cap \mathcal{U}$. On the other hand,
$$E\cap\mathcal{U}=\iota^{-1}(\widetilde{E})\cap \mathcal{U}\supset \mathcal{H}_*(f, \mathcal{V})\cap \mathcal{U}.$$
Thus $\mathcal{H}_*(f,\mathcal{V})\cap \mathcal{U}=\iota^{-1}(\widetilde{E})\cap \mathcal{U}$ is an embedded complex submanifold of $\mathcal{B}_{W_0}$ of codimension $d$ near $f$. For each $g\in \mathcal{H}_*(f,\mathcal{V})$, $d(j_{W_0}g)=d(j_{W_0}f)=d(j_{W_0}f_0)=d$ and $\mathcal{H}_*(g,\mathcal{V})=\mathcal{H}_*(f,\mathcal{V})$. Thus the same proof shows that $\mathcal{H}_*(f,\mathcal{V})$ is  an embedded complex submanifold of $\mathcal{B}_{W_0}$ of codimension $d$ near $g$. 

Finally, let us show that $\mathcal{H}_*(f,\mathcal{V})$ is closed.  
Indeed, by Proposition~\ref{prop:connected set} it is a connected component of 
$\mathcal{H}_\#(f,\mathcal{V})$ and by Lemma~\ref{lem:closedH}, $\mathcal{H}_\#(f,\mathcal{V})$ is closed. 
\end{proof}

\begin{acknowledgement}
The work is supported by the National R\&D Program (Grant No. 2021YFA1003200). WS is supported by the New Cornerstone Science Foundation through the New Cornerstone Investigator Program and the XPLORER PRIZE.
YM was partially supported by NSFC (Grant No. 12501108), and
sponsored by the Natural Science Foundation of Shanghai (Grant No. 25ZR1402412).

The authors used AI(KIMI) to assist editing the paper and checking the proof. The counterexample in \S~\ref{subsec:counterex} was based on a suggestion of AI. 
\end{acknowledgement}

\bibliographystyle{amsplain}             

\end{document}